
\NeedsTeXFormat{LaTeX2e}
\documentclass[12pt]{amsart}
\usepackage{a4wide}
\usepackage{amssymb}
\usepackage{amsthm}
\usepackage{amsmath}
\usepackage{amscd}
\usepackage{verbatim}
\usepackage[all]{xy}

\textheight8.5in \textwidth6.4in \numberwithin{equation}{section}

\theoremstyle{plain}
\newtheorem{theorem}{Theorem}[section]
\newtheorem{corollary}[theorem]{Corollary}
\newtheorem{lemma}[theorem]{Lemma}
\newtheorem{proposition}[theorem]{Proposition}

\newtheorem*{conjecture}{Conjecture}
\theoremstyle{definition}

\theoremstyle{remark}
\newtheorem{remark}{Remark}

\newcommand{\R}{\mathbb{R}}
\newcommand{\Q}{\mathbb{Q}}
\newcommand{\Z}{\mathbb{Z}}

\newcommand{\C}{\mathbb{C}}

\renewcommand{\H}{\mathbb{H}}


\newcommand{\zxz}[4]{\begin{pmatrix} #1 & #2 \\ #3 & #4 \end{pmatrix}}

\newcommand{\leg}[2]{\left( \frac{#1}{#2} \right)}
\newcommand{\kzxz}[4]{\left(\begin{smallmatrix} #1 & #2 \\ #3 & #4\end{smallmatrix}\right) }
\newcommand{\kabcd}{\kzxz{a}{b}{c}{d}}

\newcommand{\calF}{\mathcal{F}}

\newcommand{\calH}{\mathcal{H}}

\newcommand{\calQ}{\mathcal{Q}}

\newcommand{\frakc}{\mathfrak c}

\newcommand{\frake}{\mathfrak e}

\newcommand{\eps}{\varepsilon}
\newcommand{\bs}{\backslash}

\newcommand{\tr}{\operatorname{tr}}

\newcommand{\Log}{\operatorname{Log}}
\newcommand{\sgn}{\operatorname{sgn}}

\newcommand{\Sl}{\operatorname{SL}}
\newcommand{\Gl}{\operatorname{GL}}

\newcommand{\Spin}{\operatorname{Spin}}
\newcommand{\GSpin}{\operatorname{GSpin}}
\newcommand{\Mp}{\operatorname{Mp}}
\newcommand{\Orth}{\operatorname{O}}

\newcommand{\Aut}{\operatorname{Aut}}
\newcommand{\Mat}{\operatorname{Mat}}

\newcommand{\dv}{\operatorname{div}}

\newcommand{\sig}{\operatorname{sig}}
\newcommand{\res}{\operatorname{res}}

\newcommand{\Gr}{\operatorname{Gr}}

\newcommand{\SO}{\operatorname{SO}}

\newcommand{\supp}{\operatorname{supp}}

\newcommand{\Div}{\operatorname{Div}}

\begin{document}

\title[Heegner divisors, $L$-functions and
 harmonic weak Maass forms]{Heegner divisors, $L$-functions and harmonic weak Maass forms}

\date{October 1, 2007}
\author{Jan H. Bruinier and Ken Ono}

\address{Fachbereich Mathematik,
Technische Universit\"at Darmstadt, Schlossgartenstrasse 7, D--64289
Darmstadt, Germany} \email{bruinier@mathematik.tu-darmstadt.de}

\address{Department of Mathematics, University of Wisconsin,
Madison, Wisconsin 53706 USA} \email{ono@math.wisc.edu}
\thanks{The first author thanks the Max Planck Institute for Mathematics in Bonn for its support. The second author thanks the generous support of the National
Science Foundation, the Hilldale Foundation, and the Manasse family.}

\subjclass[2000]{11F37, 11G40, 11G05, 11F67}

\begin{abstract}
Recent works, mostly related to Ramanujan's mock theta functions,
make use of the fact that harmonic weak Maass forms can be
combinatorial generating functions. Generalizing works of
Waldspurger, Kohnen and Zagier, we prove that such forms also serve
as ``generating functions'' for central values and derivatives of
quadratic twists of weight 2 modular $L$-functions. To obtain these
results, we construct differentials of the third kind with twisted
Heegner divisor by suitably generalizing the Borcherds lift
to harmonic
weak Maass forms. The connection with periods, Fourier coefficients,
derivatives of $L$-functions, and points in the Jacobian of modular
curves is obtained by analyzing the properties of these
differentials using works of Scholl, Waldschmidt, and Gross and
Zagier.
\end{abstract}

\maketitle

\section{Introduction and Statement of Results}\label{sect:intro}

Half-integral weight modular forms play important roles in
arithmetic geometry and number theory. Thanks to the theory of theta
functions, such forms include important generating functions for the
representation numbers of integers by quadratic forms. Among weight
3/2 modular forms, one finds Gauss' function ($q:=e^{2\pi i \tau}$
throughout)
$$
\sum_{x,y,z\in \Z}
q^{x^2+y^2+z^2}=1+6q+12q^2+8q^3+6q^4+24q^5+\cdots,
$$
which is essentially the generating function for class numbers of
imaginary quadratic fields, as well as Gross's theta functions
\cite{Gross} which enumerate the supersingular reductions of CM
elliptic curves.

In the 1980s, Waldspurger \cite{Wa}, and Kohnen and Zagier
\cite{KZ,K} established that half-integral weight modular forms also
serve as generating functions of a different type. Using the Shimura
correspondence \cite{Sh}, they proved that certain coefficients of
half-integral weight cusp forms essentially are square-roots of
central values of quadratic twists of modular $L$-functions. When
the weight is 3/2, these results appear prominently in works on the
ancient ``congruent number problem'' \cite{Tunnell}, as well as the
deep works of Gross, Zagier and Kohnen  \cite{GZ, GKZ} on the Birch
and Swinnerton-Dyer Conjecture.

In analogy with these works, Katok and Sarnak \cite{KS} employed a
Shimura correspondence to relate coefficients of weight 1/2 Maass
forms to sums of values and sums of line integrals of Maass cusp
forms. We investigate the arithmetic properties of the coefficients
of a different class of Maass forms, the weight 1/2 harmonic weak
Maass forms.

A {\it harmonic weak Maass form of weight $k\in \frac{1}{2}\Z$ on
$\Gamma_0(N)$} (with $4\mid N$ if $k\in \frac{1}{2}\Z\setminus \Z$)
is a smooth function on $\H$, the upper half of the complex plane,
which satisfies:
\begin{enumerate}
\item[(i)]
 $f\mid_k\gamma = f$ for all $\gamma\in \Gamma_0(N)$;
\item[(ii)] $\Delta_k f =0 $, where $\Delta_k$ is the weight $k$
hyperbolic Laplacian on $\H$ (see (\ref{deflap})); \item[(iii)]
There is a polynomial $P_f=\sum_{n\leq 0} c^+(n)q^n \in \C[q^{-1}]$
such that $f(\tau)-P_f(\tau) = O(e^{-\eps v})$ as $v\to\infty$ for
some $\eps>0$. Analogous conditions are required at all
cusps.
\end{enumerate}
Throughout, for $\tau\in \H$, we let $\tau=u+iv$, where $u, v\in
\R$, and we let $q:=e^{2 \pi i \tau}$.

\begin{remark}
The polynomial $P_f$, the {\it principal part of $f$ at} $\infty$,
is uniquely determined. If $P_f$ is non-constant, then $f$ has
exponential growth at the cusp $\infty$. Similar remarks apply at
all of the cusps.
\end{remark}

\begin{remark}
The results in the body of the paper are phrased in terms of vector
valued harmonic weak Maass forms. These forms are defined in
 Section \ref{sect:modularforms}.
\end{remark}

Spaces of harmonic weak Maass forms include {\it weakly holomorphic
modular forms}, those meromorphic modular forms whose poles (if any)
are supported at cusps. We are interested in those harmonic weak
Maass forms which do not arise in this way. Such forms have been a
source of recent interest  due to their connection to Ramanujan's
mock theta functions (see \cite{BO1, BO2, BOR, O2, Za3, Z1, Z2}).
For example, it
turns out that
\begin{equation}\label{fq}
M_{f}(\tau):=q^{-1}f(q^{24})+2i\sqrt{3} \cdot N_f(\tau)
\end{equation}
is a weight 1/2 harmonic weak Maass form, where
$$
N_f(\tau):=\int_{-24\overline{\tau}}^{i\infty}
\frac{\sum_{n=-\infty}^{\infty}\left(n+\frac{1}{6}\right)e^{3\pi i
\left( n+\frac{1}{6}\right)^2z}}{\sqrt{-i(z+24\tau)}}\ dz=
\frac{i}{\sqrt{3\pi}}\sum_{n\in \Z} \Gamma(1/2,4\pi (6n+1)^2 v)
q^{-(6n+1)^2}
$$
is a period integral of a theta function, $\Gamma(a,x)$ is the
incomplete Gamma function, and $f(q)$ is Ramanujan's mock theta
function
$$
f(q):=1+\sum_{n=1}^{\infty}\frac{q^{n^2}}{(1+q)^2(1+q^2)^2\cdots
(1+q^n)^2}.
$$

This example reveals two important features  common to all harmonic
weak Maass forms on $\Gamma_0(N)$.
Firstly, all such $f$ have Fourier expansions of the form
\begin{equation}\label{fourier}
f(\tau)=\sum_{n\gg -\infty} c^+(n) q^n + \sum_{n<0} c^-(n)W(2\pi n
v) q^n,
\end{equation}
where $W(x)=W_k(x):=\Gamma(1-k,2|x|)$.  We call $\sum_{n\gg -\infty}
c^+(n) q^n$ the {\it holomorphic part} of $f$, and we call its
complement its {\it non-holomorphic part}. Secondly, the
non-holomorphic parts  are period integrals of weight $2-k$ modular
forms. Equivalently,  $\xi_{k}(f)$ is a weight $2-k$ modular form on
$\Gamma_0(N)$, where $\xi_k$ is a differential operator (see
(\ref{defxi})) which is essentially the Maass lowering operator.

Every weight $2-k$ cusp form is the image under $\xi_{k}$ of a
weight $k$ harmonic weak Maass form. The mock theta functions
correspond  to those forms whose images under $\xi_{\frac{1}{2}}$
are weight 3/2 theta functions. We turn our attention to those
weight 1/2 harmonic weak Maass forms whose images under
$\xi_{\frac{1}{2}}$ are orthogonal to the elementary theta series.
Unlike the mock theta functions, whose holomorphic parts are often
generating functions in the theory of partitions (for example, see
\cite{BO1, BO2, BOR}), we show that these other harmonic weak Maass forms
can be ``generating functions'' simultaneously for both the values
and central derivatives of quadratic twists of weight 2 modular
$L$-functions.

Although we treat the general case in this paper, to simplify
exposition, in the introduction we assume that $p$ is prime and that
$G(\tau)=\sum_{n=1}^{\infty}B_G(n)q^n\in S_2^{new}(\Gamma_0(p))$ is
a normalized Hecke eigenform with the property that the sign of the
functional equation of
$$L(G,s)=\sum_{n=1}^{\infty}\frac{B_G(n)}{n^s}$$ is
$\epsilon(G)=-1$. Therefore, we have that $L(G,1)=0$.

By Kohnen's theory of plus-spaces \cite{K}, there is a half-integral
weight newform
\begin{equation}\label{g}
g(\tau)=\sum_{n=1}^{\infty}b_g(n)q^n\in
S_{\frac{3}{2}}^{+}(\Gamma_0(4p)),
\end{equation}
unique up to a multiplicative constant, which lifts to $G$ under the
Shimura correspondence. For convenience, we choose $g$ so that its
coefficients are in $F_G$, the totally real number field obtained by
adjoining the Fourier coefficients of $G$ to $\Q$. We shall prove
that there is a weight 1/2 harmonic weak Maass form on
$\Gamma_0(4p)$ in the plus space, say
\begin{equation}\label{fourierfg}
f_g(\tau)=\sum_{n\gg -\infty} c^+_g(n) q^n + \sum_{n<0}
c^-_g(n)W(2\pi n v) q^n,
\end{equation}
whose principal part $P_{f_g}$ has coefficients in $F_G$, which also
enjoys the property that $\xi_{\frac{1}{2}}(f_g)={\|g\|^{-2}}g$,
where $\|g\|$ denotes the usual Petersson norm.

A calculation shows that if $n>0$ (see (\ref{fourierxi})), then
\begin{equation}\label{xicoeffrelation}
  b_g(n)=-4\sqrt{\pi n} \|g\|^2\cdot
  {c_g^{-}(-n)}.
\end{equation}
The coefficients $c_g^{+}(n)$ are more mysterious. We show that both
types of coefficients are related to $L$-functions. To make this
precise, for fundamental discriminants $D$ let $\chi_D$ be the
Kronecker character for $\Q(\sqrt{D})$, and let $L(G,\chi_D,s)$ be
the quadratic twist of $L(G,s)$ by $\chi_D$. These coefficients are
related to these $L$-functions in the following way.

\begin{theorem}\label{Lvalues}
Assume that $p$ is prime, and that $G\in S_2^{new}(\Gamma_0(p))$ is
a newform. If the sign of the functional equation of $L(G,s)$ is
$\epsilon(G)=-1$, then the following are true:
\begin{enumerate}
 \item If $\Delta<0$ is a fundamental
discriminant for which $\leg{\Delta}{p}=1$, then
$$
L(G,\chi_{\Delta},1)=32\|G\|^2 \|g\|^2\pi^2 \sqrt{|\Delta|}\cdot
c^{-}_g(\Delta)^2.
$$
\item If $\Delta>0$ is a fundamental discriminant for which
$\leg{\Delta}{p}=1$, then $L'(G,\chi_{\Delta},1)=0$ if and only if
$c^{+}_g(\Delta)$ is algebraic.
\end{enumerate}
\end{theorem}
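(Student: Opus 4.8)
The plan is to treat the two parts by completely different methods: part (1) is an identity of special values that reduces, via the coefficient relation (\ref{xicoeffrelation}), to a Waldspurger--Kohnen--Zagier formula, whereas part (2) is an arithmetic equivalence whose proof must route through the Gross--Zagier formula together with a transcendence argument applied to a differential of the third kind built from $f_g$.

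For part (1), observe first that when $\Delta<0$ and $\leg{\Delta}{p}=1$ the sign of the functional equation of $L(G,\chi_\Delta,s)$ is $+1$: twisting by a negative discriminant with $\leg{\Delta}{p}=1$ flips the sign $\epsilon(G)=-1$, so the central value can be nonzero. I would invoke Kohnen's generalization to $\Gamma_0(4p)$ of the theorems of Waldspurger and Kohnen--Zagier, which expresses $|b_g(|\Delta|)|^2$ as an explicit multiple of $\sqrt{|\Delta|}\,L(G,\chi_\Delta,1)\,\|g\|^2/\|G\|^2$. Substituting $b_g(|\Delta|)=-4\sqrt{\pi|\Delta|}\,\|g\|^2\,c_g^-(\Delta)$ from (\ref{xicoeffrelation}), and using that $g$ has real coefficients so that $|b_g|^2=b_g^2$, and then reconciling the normalizations should yield exactly the stated constant $32\|G\|^2\|g\|^2\pi^2$. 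The only labor here is careful bookkeeping with the constants in Kohnen's formula.

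For part (2), when $\Delta>0$ and $\leg{\Delta}{p}=1$ the sign of $L(G,\chi_\Delta,s)$ remains $-1$, forcing $L(G,\chi_\Delta,1)=0$, so that $L'(G,\chi_\Delta,1)$ is the relevant invariant. The strategy is to apply a Borcherds-type regularized theta lift, extended from weakly holomorphic forms to harmonic weak Maass forms, to $f_g$; this should produce a differential of the third kind $\eta_g$ on $X_0(p)$ whose residue divisor is the twisted Heegner divisor indexed by $\Delta$, and whose holomorphic period is governed by $c_g^+(\Delta)$. I would then argue in two links. First, by Gross--Zagier, $L'(G,\chi_\Delta,1)=0$ precisely when the $G$-isotypic projection of the twisted Heegner point vanishes in the Jacobian, i.e.\ is torsion. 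Second, by the theory of differentials of the third kind (Scholl) combined with Waldschmidt's results on linear forms in logarithms of algebraic points, this torsion condition is equivalent to the period --- hence $c_g^+(\Delta)$ --- being algebraic.

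The main obstacle is the construction underlying part (2): producing the differential $\eta_g$ from the genuinely non-holomorphic input $f_g$ and pinning down its periods in terms of $c_g^+(\Delta)$. Unlike the classical Borcherds lift, whose input is weakly holomorphic and whose output is a meromorphic modular form with integral Heegner divisor, here $f_g$ carries a real non-holomorphic part, and one must verify that the regularized lift still yields a single-valued meromorphic object --- a bona fide differential of the third kind --- rather than something with logarithmic monodromy, and then compute its periods explicitly enough to feed the transcendence criterion. Once the dictionary ``$c_g^+(\Delta)$ algebraic $\Leftrightarrow$ twisted Heegner divisor torsion in the $G$-eigenspace'' is established, the Gross--Zagier input enters as essentially a citation.
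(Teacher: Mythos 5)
Your proposal follows essentially the same route as the paper: part (1) via Kohnen's Waldspurger-type formula for half-integral weight newforms combined with \eqref{xicoeffrelation}, and part (2) via the twisted regularized theta lift of $f_g$ producing a canonical differential of the third kind with twisted Heegner residue divisor, the Scholl--Waldschmidt transcendence criterion read off through the $q$-expansion principle, and the Gross--Zagier formula. The only ingredient you leave implicit is the Hecke-operator argument (Theorem \ref{thm:alg1}) needed to pass from the algebraicity of the whole family of coefficients $c^+(\tfrac{|\Delta|n^2}{4N},\tfrac{rn}{2N})$ appearing in the differential's $q$-expansion down to the single coefficient $c^{+}_g(\Delta)$, together with the verification that $y_\Delta(f_g)$ lies in the $G$-isotypical component of the Jacobian; both are refinements of, not departures from, your outline.
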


\begin{remark}  In Theorem \ref{Lvalues} (2), we have that
$L(G,\chi_{\Delta},1)=0$ since the sign of the functional equation
of $L(G,\chi_{\Delta},s)$ is $-1$. Therefore it is natural to
consider derivatives in these cases.
\end{remark}

\begin{remark} The $f_g$ are uniquely determined up to the addition of a
weight 1/2 weakly holomorphic modular form with coefficients in
$F_G$.  Furthermore, absolute values of the nonvanishing
coefficients $c_g^{+}(n)$ are typically asymptotic to subexponential
functions in $n$. For these reasons, Theorem~\ref{Lvalues} (2)
cannot be simply modified to obtain a formula for
$L'(G,\chi_{\Delta},1)$. It would be very interesting to obtain a
more precise relationship between these derivatives and the
coefficients $c_g^{+}(\Delta)$.
\end{remark}

\begin{remark} 
We give some numerical examples illustrating the theorem in Section \ref{sect:8.3}.
\end{remark}

\begin{remark} Here we comment on the construction of the weak harmonic Maass forms $f_g$.
Due to the general results in this paper, we discuss the problem in
the context of vector valued forms. It is not difficult to see that
this problem boils down to the question of producing inverse images
of classical Poincar\'e series under $\xi_{\frac{1}{2}}$. A simple
observation establishes that these preimages should be weight 1/2
Maass-Poincar\'e series which are explicitly described in Chapter 1
of \cite{Br}. Since standard Weil-type bounds fall short of
establishing convergence of these series, we briefly discuss a
method for establishing convergence. One may employ a generalization
of work of Goldfeld and Sarnak \cite{GoldSar} (for example, see
\cite{Pribitkin}) on  Kloosterman-Selberg zeta functions. This
theory proves that the relevant zeta functions are holomorphic at
$s=3/4$, the crucial point for the task at hand. One then deduces
convergence using standard methods relating the series expansions of
Kloosterman-Selberg zeta functions with their integral
representations (for example, using Perron-type formulas). The
reader may see \cite{FO} where an argument of this type is carried
out in detail.
\end{remark}

Theorem~\ref{Lvalues} (1) follows from Kohnen's theory (see
Corollary 1 on page 242 of \cite{K}) of half-integral newforms, the
existence of $f_g$, and (\ref{xicoeffrelation}). The proof of
Theorem~\ref{Lvalues} (2) is more difficult, and it involves a
detailed study of Heegner divisors. We establish that  the
algebraicity of the coefficients $c^+_g(\Delta)$ is dictated by the
vanishing of certain twisted Heegner divisors in the Jacobian of
$X_0(p)$. This result, when combined with the work of Gross and
Zagier \cite{GZ}, will imply Theorem~\ref{Lvalues} (2).

To make this precise, we first recall some definitions. Let $d<0$
and $\Delta>0$ be fundamental discriminants which are both squares
modulo $p$. Let $\calQ_{d,p}$ be the set of discriminant $d=b^2-4ac$
integral binary quadratic forms $aX^2+bXY+cY^2$ with the property
that $p\mid a$. For these pairs of discriminants, we define the {\it
twisted Heegner divisor} $Z_{\Delta}(d)$ by
\begin{equation}
Z_\Delta(d) := \sum_{Q\in \calQ_{\Delta d,p}/\Gamma_0(p)}
\chi_\Delta (Q)\cdot  \frac{\alpha_Q}{w_Q},
\end{equation}
where $\chi_\Delta$ denotes the generalized genus character
corresponding to the decomposition $\Delta\cdot d$ as in \cite{GKZ},
 $\alpha_Q$ is the unique root of $Q(x,1)$ in $\H$, and $w_Q$ denotes
 the order of the stabilizer  of $Q$ in $\Gamma_0(p)$.
Then $Z_\Delta(d)$ is a divisor on $X_0(p)$ defined over
$\Q(\sqrt{\Delta})$ (see Lemma \ref{properties}). We use these
twisted Heegner divisors to define the degree $0$ divisor
\begin{equation} y_{\Delta}(d):=Z_\Delta(d) -
\deg(Z_\Delta(d))\cdot \infty. \end{equation} Finally, we associate
a divisor to $f_g$ by letting
\begin{equation}
y_{\Delta}(f_g):=\sum_{n<0} c_g^+(n) y_{\Delta}(n)\in
\Div^0(X_0(p))\otimes F_G.
\end{equation}
Recall that we have selected $f_g$ so that the coefficients of
$P_{f_g}$ are in $F_G$.

To state our results, let $J$ be the Jacobian of $X_0(p)$, and let
$J(F)$ denote the points of $J$ over a number field $F$. The Hecke
algebra acts on $J(F)\otimes \C$, which by the Mordell-Weil Theorem
is a finite dimensional vector space. The main results of Section
\ref{sect:jac} (see Theorems~\ref{goodpoint} and  \ref{thm:alg1})
imply the following theorem.

\begin{theorem}
\label{intro:thm1} Assuming the notation and hypotheses above, the
point corresponding to $y_{\Delta}(f_g)$ in
$J(\Q(\sqrt{\Delta}))\otimes \C$ is in its $G$-isotypical component.
Moreover, the following are equivalent:
\begin{enumerate}
\item[(i)] The Heegner divisor $y_{\Delta}(f_g)$ vanishes in
$J(\Q(\sqrt{\Delta}))\otimes\C$.
\item[(ii)] The coefficient
$c^+_g(\Delta)$ is algebraic.
\item[(iii)] The coefficient
$c^+_g(\Delta)$ is contained in $F_G$.
\end{enumerate}
\end{theorem}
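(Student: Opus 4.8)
I want to prove the equivalence of three conditions about the divisor $y_\Delta(f_g)$ and the coefficient $c_g^+(\Delta)$, together with the assertion that the divisor lands in the $G$-isotypical component. Let me think about how the pieces fit together.

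The key object is $y_\Delta(f_g) = \sum_{n<0} c_g^+(n) y_\Delta(n)$. The coefficients $c_g^+(n)$ for $n<0$ are the coefficients of the principal part $P_{f_g}$, which by construction lie in $F_G$. So this divisor is manifestly defined over $\Q(\sqrt\Delta)$ and algebraic — it's a finite $F_G$-linear combination of Heegner divisors. The point is to relate its vanishing in the Jacobian to the single analytically-defined coefficient $c_g^+(\Delta)$.

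The bridge between "divisor vanishing" and "coefficient algebraic" must come from a differential of the third kind. The abstract promises a Borcherds-type lift of $f_g$ to a meromorphic differential $\eta$ on $X_0(p)$ (really its twisted version) whose residue divisor is exactly $y_\Delta(f_g)$. By Scholl's theory (and the general theory of differentials of the third kind), such a differential is defined over a number field precisely when its residue divisor is a torsion point — or more precisely, the obstruction to algebraicity of $\eta$ is governed by whether the divisor class vanishes in $J \otimes \C$. The "extra" period that appears when one integrates this differential, which is a transcendental quantity by Waldschmidt's theorem unless the divisor class is trivial, is exactly where the coefficient $c_g^+(\Delta)$ shows up.

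**Key steps.**

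\begin{enumerate}
\item[(1)] Construct the twisted Borcherds lift. Apply the generalization of the Borcherds lift (developed earlier in the paper) to the weight $1/2$ harmonic weak Maass form $f_g$. This produces a differential of the third kind $\eta_g$ on $X_0(p)$ whose residues at Heegner points recover the twisted Heegner divisor; its residue divisor is $y_\Delta(f_g)$, and its "holomorphic anomaly" / the logarithmic-derivative data encodes the full Fourier expansion of $f_g$, in particular the coefficient $c_g^+(\Delta)$.

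\item[(2)] Identify the isotypical component. Show that $y_\Delta(f_g)$ sits in the $G$-isotypical part of $J \otimes \C$. This should follow from the Hecke-equivariance of the Borcherds/Shimura construction: because $g$ corresponds to $G$ under Shimura and $\xi_{1/2}(f_g) = \|g\|^{-2} g$, the divisor inherits the Hecke eigenvalues of $G$. Concretely, one checks $T_\ell \, y_\Delta(f_g) = B_G(\ell)\, y_\Delta(f_g)$ in $J \otimes \C$, using the action of Hecke operators on twisted Heegner divisors (as in Gross–Kohnen–Zagier).

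\item[(3)] (i) $\Rightarrow$ (iii). If the divisor class vanishes in $J(\Q(\sqrt\Delta)) \otimes \C$, then $\eta_g$ has trivial residue class, so (after subtracting an explicit algebraic differential) it becomes a differential of the first/second kind defined over $\overline\Q$; comparing Fourier coefficients forces $c_g^+(\Delta) \in F_G$. This is where the rationality of the principal part and Scholl's algebraicity criterion enter.

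\item[(4)] (iii) $\Rightarrow$ (ii). Immediate, since $F_G \subset \overline\Q$.

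\item[(5)] (ii) $\Rightarrow$ (i). The contrapositive is the transcendence input. If the divisor class is nonzero, then a certain period of $\eta_g$ — which differs from $c_g^+(\Delta)$ by algebraic factors — is a nonzero period of a differential of the third kind with nontrivial residue divisor, hence transcendental by Waldschmidt's theorem on linear forms in logarithms (abelian logarithms of algebraic points). This forces $c_g^+(\Delta)$ to be transcendental, contradicting (ii).
\end{enumerate}

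**Main obstacle.** The hard part is Step (5), the transcendence argument, and its precise dovetailing with Step (1). One must show that the relevant period genuinely equals $c_g^+(\Delta)$ up to algebraic factors and an algebraic logarithm-free correction, so that Waldschmidt's theorem applies cleanly: the period of a differential of the third kind whose residue divisor represents a \emph{nontorsion} (or merely nonzero in $J\otimes\C$) class is transcendental. The delicate point is controlling the "correction terms" — the contributions from the holomorphic part and from other cusps — so that one isolates a single abelian logarithm of an algebraic point and can invoke Waldschmidt. I also expect genuine care is needed in Step (2) to handle the genus character $\chi_\Delta$ and the field of definition $\Q(\sqrt\Delta)$ correctly, so that the Hecke action is compatible with the Galois descent; this is where the Gross–Kohnen–Zagier formalism for twisted Heegner points does the real work.
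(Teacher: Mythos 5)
Your overall strategy matches the paper's: the regularized theta lift of $f_g$ produces the canonical differential of the third kind with residue divisor $y_\Delta(f_g)$, Scholl's refinement of Waldschmidt's transcendence theorem then links vanishing of the divisor class to algebraicity of the Fourier coefficients of that differential via the $q$-expansion principle, and Hecke equivariance places the divisor in the $G$-isotypical component. But there is a genuine gap in your Step (5), and it is exactly the point you flag as the ``main obstacle'' without resolving. Scholl's theorem only tells you that if $y_\Delta(f_g)$ is nonzero in $J\otimes\C$ then \emph{some} Fourier coefficient of the canonical differential is transcendental. By Theorem \ref{cdfourier} the $n$-th coefficient of that differential is the divisor sum $\sum_{d\mid n}\frac{n}{d}\leg{\Delta}{d}c^+(\tfrac{|\Delta|n^2}{4Nd^2},\tfrac{rn}{2Nd})$, so by M\"obius inversion one only learns that $c^+(\tfrac{|\Delta|n^2}{4N},\tfrac{rn}{2N})$ is transcendental for \emph{some} $n\geq 1$ (this is Theorem \ref{equivcond}); nothing in your argument isolates $n=1$, i.e.\ the single coefficient $c_g^+(\Delta)$ appearing in the statement. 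The paper closes this gap with a Hecke descent (Theorem \ref{thm:alg1}): if $n_0>1$ is the smallest index with transcendental coefficient and $p\mid n_0$ is prime, then $f_g\mid T(p)-p^{-1}\lambda_p f_g$ is weakly holomorphic with principal part in $F_G$ (Lemma \ref{lem:hecke}), hence has all coefficients algebraic (Lemmas \ref{rationalcoeff} and \ref{weakrat}), and the explicit action of $T(p)$ on Fourier expansions writes the $n_0$-coefficient as an algebraic combination of coefficients of index at most $n_0/p$, a contradiction; so $n_0=1$.

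The same missing ingredient affects your Step (2). The identity $T_\ell\, y_\Delta(f_g)=B_G(\ell)\,y_\Delta(f_g)$ is not literally true at the level of divisors, because $f_g$ is a Hecke eigenform only modulo $M^!_{1/2,\rho}$: one gets $T(p)\,y_{\Delta,r}(f)=\lambda_p\, y_{\Delta,r}(f)+p\, y_{\Delta,r}(f')$ with $f'$ weakly holomorphic. To conclude, you must know that $y_{\Delta,r}(f')$ vanishes in $J(\Q(\sqrt{\Delta}))\otimes\C$ for every weakly holomorphic $f'$ whose coefficients lie in a number field. That is Corollary \ref{cor:tbp}, and it rests on the generalized twisted Borcherds product $\Psi_{\Delta,r}(z,f')$ together with the criterion that its multiplier system has finite order precisely when the relevant coefficients are rational (Theorems \ref{product} and \ref{equivcond-borcherds}). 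Your proposal never invokes this ``weakly holomorphic input,'' and without it neither the isotypical-component claim nor the Hecke descent of the previous paragraph can be completed.
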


\noindent We then obtain the following generalization of the
Gross-Kohnen-Zagier theorem \cite{GKZ} (see Corollary
\ref{GKZ-theorem} and Theorem \ref{isogen}).

\begin{theorem}\label{intro:thm3}
Assuming the notation and hypotheses above, we have that
\[
\sum_{n>0} y^G_\Delta(-n) q^n = g(\tau)\otimes y_\Delta(f_g) \in
S_{\frac{3}{2}}^{+}(\Gamma_0(4p))\otimes  J(\Q(\sqrt{\Delta})),
\]
where $y^G_\Delta(-n)$ denotes the projection of $y_\Delta(-n)$ onto
its $G$-isotypical component.
\end{theorem}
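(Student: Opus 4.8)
The plan is to prove the identity in two stages: first show that the $G$-isotypical generating series on the left is forced to have the shape $g(\tau)\otimes P_\Delta$ for a single point $P_\Delta$ in the $G$-isotypical component of $J(\Q(\sqrt{\Delta}))\otimes\C$, and then identify $P_\Delta$ with $y_\Delta(f_g)$.

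For the first stage I would invoke the twisted Gross--Kohnen--Zagier theorem. The generating series $\sum_{n>0} y_\Delta(-n)q^n$ of twisted Heegner divisors is a weight $3/2$ form in the Kohnen plus space on $\Gamma_0(4p)$ with coefficients in $\Div^0(X_0(p))$, and hence is valued in $J(\Q(\sqrt{\Delta}))\otimes\C$ once one passes to the Jacobian; this modularity, with the twist supplied by the genus character $\chi_\Delta$, is the content of \cite{GKZ} in the form recorded as Corollary \ref{GKZ-theorem}, the field of definition $\Q(\sqrt{\Delta})$ coming from Lemma \ref{properties}. Since the Hecke operators act compatibly on the divisors $y_\Delta(-n)$ and on the Fourier expansion, applying any linear functional to the $G$-isotypical projection produces a scalar weight $3/2$ plus-space form that is a Hecke eigenform with the eigenvalues of $G$ under the Shimura correspondence. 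By Kohnen's multiplicity one, the corresponding eigenspace of $S_{\frac{3}{2}}^{+}(\Gamma_0(4p))$ is one-dimensional and spanned by $g$, so there is a unique point $P_\Delta$ with $y_\Delta^G(-n)=b_g(n)\,P_\Delta$ for every $n>0$; that is,
\[
\sum_{n>0} y_\Delta^G(-n)\,q^n = g(\tau)\otimes P_\Delta .
\]

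It then remains to identify $P_\Delta$, and here I would use the principal part of $f_g$. Projecting $y_\Delta(f_g)=\sum_{n<0} c_g^+(n)\,y_\Delta(n)$ onto its $G$-isotypical component and inserting the proportionality just established gives
\[
y_\Delta(f_g)^G = \sum_{n>0} c_g^+(-n)\,y_\Delta^G(-n) = \Bigl(\sum_{n>0} c_g^+(-n)\,b_g(n)\Bigr)P_\Delta .
\]
The scalar is precisely the Bruinier--Funke pairing $\{f_g,g\}$ of the weight $1/2$ form $f_g$ against the weight $3/2$ cusp form $g$, obtained as the constant term of the weight $2$ product $f_g\cdot g$; this pairing equals $\langle \xi_{1/2}(f_g),g\rangle_{\Pet}$, and the normalization $\xi_{1/2}(f_g)=\|g\|^{-2}g$ chosen for $f_g$ makes it $\|g\|^{-2}\langle g,g\rangle=1$. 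Hence $y_\Delta(f_g)^G=P_\Delta$. Finally, Theorem \ref{intro:thm1} asserts that $y_\Delta(f_g)$ already lies in its $G$-isotypical component, so $y_\Delta(f_g)^G=y_\Delta(f_g)$ and therefore $P_\Delta=y_\Delta(f_g)$. Substituting this back into the displayed proportionality yields the assertion, which is Theorem \ref{isogen}.

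The hard part is the first stage. The modularity of the Jacobian-valued generating series of Heegner divisors, together with its compatibility with the plus-space Hecke action, is the deep input: in the untwisted case it is the theorem of Gross, Kohnen and Zagier (which can also be obtained from Borcherds' regularized theta lift), and the twist by $\chi_\Delta$ must be propagated while tracking rationality over $\Q(\sqrt{\Delta})$. By contrast the identification of $P_\Delta$ is comparatively formal once the Bruinier--Funke pairing and Theorem \ref{intro:thm1} are available; the only delicate points there are the precise constants in the pairing, which are arranged to equal $1$ exactly by the choice $\xi_{1/2}(f_g)=\|g\|^{-2}g$, and the rationality of the principal part of $f_g$ needed to keep $y_\Delta(f_g)$ defined over $\Q(\sqrt{\Delta})$.
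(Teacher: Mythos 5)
Your proposal is correct and follows essentially the same route as the paper's proof of Theorem \ref{isogen}: modularity of the Jacobian-valued generating series from Corollary \ref{GKZ-theorem}, Hecke equivariance plus multiplicity one to reduce to $g\otimes P_\Delta$, and identification of the constant via the pairing $\{g,f_g\}=(g,\xi_{1/2}(f_g))=1$ together with the fact that $y_\Delta(f_g)$ already lies in its $G$-isotypical component. The only cosmetic difference is that you compute the scalar by projecting $y_\Delta(f_g)$, whereas the paper pairs $A^G_{\Delta,r}$ directly with $f$; these are the same calculation.
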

\noindent This result, when combined with the Gross-Zagier theorem
\cite{GZ}, gives the conclusion (see Theorem~\ref{thm:alg2}) that
the Heegner divisor $y_\Delta(f_g)$ vanishes in
$J(\Q(\sqrt{\Delta}))\otimes\C$ if and only if
$L'(G,\chi_\Delta,1)=0$, thereby proving Theorem~\ref{Lvalues} (2).

These results arise from the interplay between Heegner divisors,
harmonic weak Maass forms and Borcherds products, relations which are of independent interest. We extend the theory of regularized
theta lifts of harmonic weak Maass forms, and we apply these results
to obtain generalized Borcherds products. In that way harmonic weak
Maass forms are placed in the central position which allows us to
obtain the main results in this paper.

In view of
Theorem~\ref{Lvalues}, it is natural to investigate the algebraicity
and the nonvanishing of the coefficients of harmonic weak
Maass forms, questions which are of particular interest
in the context of elliptic curves. As a companion
to Goldfeld's famous conjecture for quadratic twists of elliptic
curves \cite{Goldfeld}, which asserts that ``half'' of the quadratic
twists of a fixed elliptic curve have rank zero (resp. one), we make
the following conjecture.

\begin{conjecture} Suppose that
$$
f(\tau)=\sum_{n\gg -\infty} c^+(n) q^n + \sum_{n<0} c^-(n)W(2\pi n
v) q^n
$$
is a weight $1/2$ harmonic weak Maass form on $\Gamma_0(N)$ whose
principal parts at cusps are defined over a number field. If
$\xi_{1/2}(f)$ is non-zero and is not a linear combination of
elementary theta series, then
\begin{displaymath}
\begin{split}
\# \{ 0< n < X \ &: \ c^{-}(-n)\neq 0\} \gg_f X,\\
\# \{ 0 < n < X \ &: \ c^{+}(n)\ {\text {is transcendental}}\} \gg_f
X.
\end{split}
\end{displaymath}
\end{conjecture}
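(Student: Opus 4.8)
The plan is to treat the two assertions separately, in each case translating the arithmetic of the coefficients of $f$ into a nonvanishing statement about quadratic twists via the dictionary developed in this paper. Throughout I would first decompose $\xi_{1/2}(f)$ into Hecke eigenforms and reduce to the case where the relevant weight $3/2$ data is attached to newforms $G_i\in S_2^{\mathrm{new}}$; the hypothesis that $\xi_{1/2}(f)$ is not a linear combination of elementary theta series is exactly what guarantees that these Shimura lifts are genuine cusp forms, so that the $L$-functions appearing below are honest modular $L$-functions rather than Eisenstein contributions.

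For the first assertion I would observe that $h:=\xi_{1/2}(f)=\sum_{n>0}b(n)q^n$ is a weight $3/2$ cusp form on $\Gamma_0(N)$, and that the general (vector-valued) analogue of the relation \eqref{xicoeffrelation} expresses $b(n)$ as a fixed nonzero constant times $\sqrt{n}\,\overline{c^-(-n)}$. Hence $c^-(-n)\neq 0$ if and only if $b(n)\neq 0$, so the first bound is precisely the statement that a weight $3/2$ cusp form not lying in the span of elementary theta series has a positive proportion of nonvanishing coefficients. The lacunary half-integral weight cusp forms are exactly the combinations of elementary theta series, so the hypothesis rules out the lacunary case; to upgrade non-lacunarity to the quantitative bound $\gg_f X$ I would pass, for squarefree $n$ in a fixed square class, to the Waldspurger/Kohnen--Zagier identity expressing $b_{g_i}(n)^2$ as an explicit nonzero multiple of $L(G_i,\chi_n,1)$ (the analogue of Theorem \ref{Lvalues}(1)), thereby reducing the claim to positive-proportion nonvanishing of these central values.

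For the second assertion I would invoke Theorem \ref{intro:thm1}, in the vector-valued generalization of Section \ref{sect:jac}, to convert the transcendence of $c^+(n)$ into the nonvanishing of a twisted Heegner divisor in the Jacobian. Restricting to fundamental discriminants $\Delta>0$ with $\leg{\Delta}{p}=1$ --- a subset of positive density, which suffices for a lower bound --- the equivalences of Theorem \ref{intro:thm1} together with the Gross--Zagier formula show that $c^+(\Delta)$ is transcendental if and only if $y_\Delta(f)\neq 0$ in $J(\Q(\sqrt{\Delta}))\otimes\C$, which in turn holds if and only if $L'(G,\chi_\Delta,1)\neq 0$. Thus the second bound reduces to the assertion that $L'(G,\chi_\Delta,1)\neq 0$ for a positive proportion of such $\Delta$.

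The hard part is exactly these two nonvanishing inputs: a positive proportion of quadratic twists with nonvanishing central value (for the first assertion) and, more seriously, with nonvanishing central derivative (for the second). These are the statements predicted by Goldfeld's conjecture, and the strongest unconditional results --- of strength $\gg X/\log X$ for values and weaker still for derivatives --- fall short of the required $\gg_f X$. A secondary, more tractable difficulty is ruling out systematic cancellation in $b(n)=\sum_i c_i b_{g_i}(n)$ and among the Heegner divisors attached to distinct eigencomponents, which I would address by linear-independence (multiplicity-one) arguments for the sign and value distributions of the $G_i$. Consequently I expect the full conjecture to follow from the methods of this paper once Goldfeld-type positive-proportion nonvanishing is available; the analytic nonvanishing itself is the genuine obstruction, and that is why the statement is recorded only as a conjecture.
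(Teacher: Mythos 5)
This statement is recorded in the paper as a \emph{conjecture}; the authors give no proof of it, and your assessment of why it is out of reach is essentially the same as theirs. Your reduction is exactly the route the paper takes to its partial result, Corollary \ref{estimates}: the first assertion is converted via the relation \eqref{xicoeffrelation} (in its vector valued form \eqref{fourierxi}) and the Waldspurger/Kohnen formula into nonvanishing of central values $L(G,\chi_\Delta,1)$, and the second via Theorems \ref{intro:thm1} and \ref{thm:alg2} (i.e.\ the Heegner-divisor dictionary plus Gross--Zagier) into nonvanishing of $L'(G,\chi_\Delta,1)$; the paper then inserts the unconditional inputs of Ono--Skinner and Perelli--Pomykala to obtain only $\gg X/\log X$ and $\gg X^{1-\epsilon}$. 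You correctly identify that upgrading these to $\gg_f X$ is precisely Goldfeld-type positive-proportion nonvanishing, which is open, and that this is why the statement is a conjecture rather than a theorem.

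Two small caveats if you ever want to push this further. First, the conjecture is stated for all integers $n$, whereas the dictionary of Theorems \ref{Lvalues} and \ref{intro:thm1} is phrased for fundamental discriminants in the appropriate square class modulo $4N$; for a lower bound this restriction is harmless, as you note, but one should say explicitly that fundamental discriminants satisfying the local conditions already have positive density. Second, for a shadow $\xi_{1/2}(f)$ that is not an eigenform, the cancellation issue you raise among the eigencomponents $G_i$ is genuine and is not addressed in the paper either; a clean way around it is to apply a Hecke projector defined over the coefficient field to reduce to a single isotypical component, at the cost of modifying $f$ by something with controlled principal part (Lemma \ref{lem:hecke}), rather than arguing about joint sign distributions.
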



\begin{remark}
Suppose that $G$ is as in Theorem~\ref{Lvalues}. If $G$ corresponds
to a modular elliptic curve $E$, then the truth of a sufficiently
precise form of the conjecture for $f_g$, combined with Kolyvagin's
Theorem on the Birch and Swinnerton-Dyer Conjecture, would prove
that a ``proportion'' of quadratic twists of $E$ have Mordell-Weil
rank zero (resp. one).
\end{remark}

\begin{remark}
In a recent paper, Sarnak \cite{Sarnak} characterized those Maass
cusp forms whose Fourier coefficients are all integers. He proved
that such a Maass cusp form must correspond to even irreducible
2-dimensional Galois representations which are either dihedral or
tetrahedral. More generally, a number of authors such as Langlands
\cite{langlands}, and Booker, Str\"ombergsson, and Venkatesh
\cite{Booker} have considered the algebraicity of coefficients of
Maass cusp forms. It is generally believed that the coefficients of
generic Maass cusp forms are transcendental. The conjecture above
suggests that a similar phenomenon should also hold for harmonic
weak Maass forms. In this setting, we believe that the exceptional
harmonic weak Maass forms are those which arise as preimages of
elementary theta functions such as those forms associated to the
mock theta functions.
\end{remark}

In the direction of this conjecture, we combine
Theorem~\ref{Lvalues} with works by the second author and Skinner
\cite{OSk} and Perelli and Pomykala \cite{PP} to obtain the
following result.

\begin{corollary}\label{estimates}
Assuming the notation and hypotheses above, as $X\rightarrow
+\infty$ we have
\begin{displaymath}
\begin{split}
\# \{ -X< \Delta < 0 \  {\text {fundamental}}\ &: \ c_g^{-}(\Delta)\neq 0\} \gg_{f_g} \frac{X}{\log X},\\
\# \{ 0 < \Delta < X \ {\text {fundamental}}\ &: \ c_g^{+}(\Delta)\
{\text {is transcendental}}\} \gg_{f_g,\epsilon} X^{1-\epsilon}.
\end{split}
\end{displaymath}
\end{corollary}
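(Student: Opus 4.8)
The plan is to deduce Corollary~\ref{estimates} by translating each of the two asymptotic lower bounds, via Theorem~\ref{Lvalues}, into a statement about non-vanishing of $L$-values (resp. non-vanishing of $L$-derivatives), and then invoking the cited analytic results of Perelli--Pomykala and Ono--Skinner for these arithmetic quantities. The key point is that Theorem~\ref{Lvalues} gives an exact dictionary: for $\Delta<0$ with $\leg{\Delta}{p}=1$ we have $c_g^-(\Delta)\neq 0$ if and only if $L(G,\chi_\Delta,1)\neq 0$, and for $\Delta>0$ with $\leg{\Delta}{p}=1$ the coefficient $c_g^+(\Delta)$ is transcendental if and only if $L'(G,\chi_\Delta,1)\neq 0$ (using part (2), where $L(G,\chi_\Delta,1)=0$ forces one to look at the derivative).

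First I would treat the first inequality. By Theorem~\ref{Lvalues}~(1), counting $-X<\Delta<0$ with $c_g^-(\Delta)\neq 0$ is, up to the fixed congruence and splitting conditions at $p$, the same as counting negative fundamental discriminants $\Delta$ in that range with $L(G,\chi_\Delta,1)\neq 0$. The theorem of Perelli and Pomykala \cite{PP} furnishes exactly such a positive-density lower bound of the shape $\gg X/\log X$ for the number of fundamental discriminants up to $X$ for which the central twisted $L$-value is non-zero; one has only to check that their result is compatible with restricting to a fixed arithmetic progression / splitting condition modulo $p$ (so that $\leg{\Delta}{p}=1$ and $\Delta$ is a square mod $p$), which is a routine sieving adjustment and does not change the order of magnitude. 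This yields the claimed bound $\gg_{f_g} X/\log X$.

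Next I would treat the second inequality. By Theorem~\ref{Lvalues}~(2), a fundamental discriminant $\Delta>0$ with $\leg{\Delta}{p}=1$ contributes a transcendental $c_g^+(\Delta)$ precisely when $L'(G,\chi_\Delta,1)\neq 0$. The work of Ono and Skinner \cite{OSk} supplies a lower bound of the form $\gg_\epsilon X^{1-\epsilon}$ for the number of fundamental discriminants up to $X$ whose twisted $L$-function has non-vanishing central derivative (in the analytic rank one setting forced here by the sign $\epsilon(G)=-1$). Again one restricts to the splitting class at $p$, and the same power-saving bound survives. Combining gives $\gg_{f_g,\epsilon} X^{1-\epsilon}$ transcendental coefficients.

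The main obstacle is not conceptual but bookkeeping: one must verify that the congruence and quadratic conditions imposed on $\Delta$ by the hypotheses of Theorem~\ref{Lvalues} (namely that $\Delta$ be a fundamental discriminant that is a square modulo $p$, with $\leg{\Delta}{p}=1$) are of the type that the cited density results of \cite{PP} and \cite{OSk} can accommodate while preserving the stated orders of magnitude $X/\log X$ and $X^{1-\epsilon}$. Concretely, I expect this to require checking that those theorems may be localized to a fixed residue class (or finite union of residue classes) modulo $p$ without degrading the bounds, which is standard for the sieve and mollifier methods underlying them. Beyond confirming this compatibility, the corollary follows immediately from the equivalences in Theorem~\ref{Lvalues}, so no further modular-forms input is needed.
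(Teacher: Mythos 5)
Your reduction is exactly the one the paper uses: Theorem~\ref{Lvalues} converts the two counts into non-vanishing statements for $L(G,\chi_\Delta,1)$ (for $\Delta<0$) and for $L'(G,\chi_\Delta,1)$ (for $\Delta>0$), and the corollary then follows from quantitative non-vanishing results in the literature. However, you have swapped the two citations, so as written neither appeal to the literature is supported by its cited source. The bound $\gg X/\log X$ for the number of fundamental discriminants with $L(G,\chi_{\Delta},1)\neq 0$ is Corollary~3 of Ono--Skinner \cite{OSk}, whose paper is precisely about non-vanishing of central \emph{values} of quadratic twists; the bound $\gg_{\epsilon} X^{1-\epsilon}$ for the number of discriminants with $L'(G,\chi_{\Delta},1)\neq 0$ comes from the proof of Theorem~1 of Perelli--Pomykala \cite{PP}, whose averaging argument controls twists of low analytic rank and hence, given the forced vanishing of the central value, non-vanishing of the central \emph{derivative}. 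With the attributions corrected the argument is the paper's own. Your concern about restricting to the splitting condition $\leg{\Delta}{p}=1$ is reasonable (and is glossed over in the paper's one-line proof); since only a lower bound is needed, it suffices that both methods tolerate restriction to a fixed class of discriminants modulo $p$, which they do.
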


\begin{remark}  One can typically obtain better estimates for $c_g^{-}(\Delta)$
using properties of 2-adic Galois representations. For example, if
$L(G,s)$ is the Hasse-Weil $L$-function of an elliptic curve where
$p$ is not of the form $x^2+64$, where $x$ is an integer, then using
Theorem 1 of \cite{O} and the theory of Setzer curves \cite{Setzer},
one can find a rational number $0<\alpha<1$ for which
$$
\# \{ -X< \Delta < 0 \  {\text {fundamental}}\ : \
c_g^{-}(\Delta)\neq 0\} \gg_{f_g} \frac{X}{(\log X)^{1-\alpha}}.
$$
\end{remark}

Now we briefly provide an overview of the ideas behind the proofs of
our main
theorems. They depend on the construction of canonical
differentials of the third kind for twisted Heegner divisors. In
Section~\ref{sect:lift} we produce such differentials of the form
$\eta_{\Delta,r}(z,f)=-\frac{1}{2}\partial \Phi_{\Delta,r}(z,f)$,
where $\Phi_{\Delta,r}(z,f)$ are automorphic Green functions on
$X_0(N)$ which are obtained as liftings of weight 1/2 harmonic weak
Maass forms $f$. To define these liftings,  in Section
\ref{sect:lift} we generalize the regularized theta lift due to
Borcherds, Harvey, and Moore (for example, see \cite{Bo1},
\cite{Br}). We then employ transcendence results of Waldschmidt and
Scholl (see \cite{W}, \cite{Sch}), for the periods of differentials,
to relate the vanishing of twisted Heegner divisors in the Jacobian
to the algebraicity of the corresponding canonical differentials of
the third kind. By means of the $q$-expansion principle, we  obtain
the connection to the coefficients of harmonic weak Maass forms.

In Section~\ref{sect:bor} we construct generalized Borcherds
products for twisted Heegner divisors, and we study their properties
and multiplier systems (Theorem \ref{product}). In particular, we
give a necessary and sufficient condition that the character of such
a Borcherds product has finite order (Theorem
\ref{equivcond-borcherds}).

In Section~\ref{sect:jac}, we consider the implications of these
results when restricting to Hecke stable components. We obtain the
general versions of Theorems~\ref{Lvalues}, \ref{intro:thm1}, and
\ref{intro:thm3}, and we prove Corollary~\ref{estimates}. In
particular, Theorem~\ref{intro:thm3}, the Gross-Kohnen-Zagier
theorem for twisted Heegner divisors, is proved by adapting an
argument of Borcherds \cite{Bo2}, combined with an  analysis of the
Hecke action on cusp forms, harmonic weak Maass forms, and the
Jacobian.

In the body of the paper we consider weight $2$ newforms $G$ of
arbitrary level and functional equation. For the regularized theta
lift, it is convenient to identify $\Sl_2$ with $\Spin(V)$ for a
certain rational quadratic space $V$ of signature $(2,1)$, and to
view the theta lift as a map from vector valued modular forms for
the metaplectic group to modular forms on $\Spin(V)$. We define the
basic setup in Section \ref{sect:prel}, and in Section
\ref{sect:difftk} we recall some results of Scholl on canonical
differentials of the third kind.  The relevant theta kernels are
then studied in Section \ref{sect:2}. They can be viewed as vector
valued weight $1/2$ versions of the weight $3/2$ twisted theta
kernels studied in \cite{Sk2} in the context of Jacobi forms. The
regularized theta lift is studied in Section \ref{sect:lift}, and
Sections \ref{sect:bor} and \ref{sect:jac} are devoted to the proofs
of the general forms of our main results as described in the
previous paragraphs.

We conclude the paper with explicit examples. In Section
\ref{sect:ex} we give examples of relations among Heegner divisors
which are {\em not} given by Borcherds lifts of weakly holomorphic
modular forms. They are obtained as lifts of harmonic weak Maass
forms. One of these examples is related to a famous example of Gross
\cite{Za1}, and the other is related to Ramanujan's mock theta
function $f(q)$. We also derive the infinite product expansions of
Zagier's twisted modular polynomials \cite{Za2}.

\section*{Acknowledgements}
We thank Fredrik Str\"omberg for his numerical computations of harmonic weak Maass forms, and we thank Tonghai Yang for many fruitful conversations.
We thank the referee(s) for their comments on a preliminary version
of this paper.

\section{Preliminaries}

\label{sect:prel}

To ease exposition, the results in the introduction were stated
using the classical language of half-integral weight modular forms.
 To treat the case of general levels and functional equations, it will
be more convenient to work with vector valued forms and certain Weil
representations. Here we recall this framework, and we discuss
important theta functions which will be used to study differentials
of the third kind.

We begin by fixing notation. Let $(V,Q)$ be a non-degenerate
rational quadratic space of signature $(b^+,b^-)$. Let $L\subset V$
be an even lattice with dual $L'$. The discriminant group $L'/L$,
together with the $\Q/\Z$-valued quadratic form induced by $Q$, is
called the {\it discriminant form} of the lattice $L$.

\subsection{The Weil representation}
Let $\H=\{\tau\in \C;\;\Im(\tau)>0\}$ be the complex upper half
plane. We write $\Mp_2(\R)$ for the metaplectic two-fold cover of
$\Sl_2(\R)$. The elements of this group are pairs $(M,\phi(\tau))$,
where $M=\kabcd\in\Sl_2(\R)$ and $\phi:\H\to \C$ is a holomorphic
function with $\phi(\tau)^2=c\tau+d$.  The multiplication is defined
by
\[
(M,\phi(\tau)) (M',\phi'(\tau))=(M M',\phi(M'\tau)\phi'(\tau)).
\]
We denote the integral metaplectic group, the inverse image of
$\Gamma:=\Sl_2(\Z)$ under the covering map, by
$\tilde\Gamma:=\Mp_2(\Z)$. It is well known that $\tilde\Gamma$ is
generated by $T:= \left( \kzxz{1}{1}{0}{1}, 1\right)$, and $S:=
\left( \kzxz{0}{-1}{1}{0}, \sqrt{\tau}\right)$. One has the
relations $S^2=(ST)^3=Z$, where $Z:=\left( \kzxz{-1}{0}{0}{-1},
i\right)$ is the standard generator of the center of $\tilde\Gamma$.
We let $\tilde\Gamma_\infty:=\langle T\rangle\subset\tilde\Gamma$.

We now recall the Weil representation associated with the
discriminant form $L'/L$ (for example, see \cite{Bo1}, \cite{Br}).
It is a representation of $\tilde\Gamma$ on the group algebra
$\C[L'/L]$. We denote the standard basis elements of $\C[L'/L]$ by
$\frake_h$, $h\in L'/L$, and write $\langle\cdot,\cdot \rangle$ for
the standard scalar product (antilinear in the second entry) such
that $\langle \frake_h,\frake_{h'}\rangle =\delta_{h,h'}$. The Weil
representation $\rho_L$ associated with the discriminant form $L'/L$
is the unitary representation of $\tilde\Gamma$ on
$\C[L'/L]$ defined by
\begin{align}
\label{eq:weilt}
\rho_L(T)(\frake_h)&:=e(h^2/2)\frake_h,\\
\label{eq:weils} \rho_L(S)(\frake_h)&:=
\frac{e((b^--b^+)/8)}{\sqrt{|L'/L|}} \sum_{h'\in L'/L} e(-(h,h'))
 \frake_{h'}.\\
\intertext{Note that}
\label{eq:weilz} \rho_L(Z)(\frake_h)&=e((b^--b^+)/4) \frake_{-h}.
\end{align}

\subsection{Vector valued modular forms}
\label{sect:modularforms}

If $f:\H\to \C[L'/L]$ is a function, we write $f=\sum_{\lambda\in
L'/L} f_h \frake_h$ for its decomposition in components with respect
to the standard basis of $\C[L'/L]$. Let $k\in \frac{1}{2}\Z$, and
let $M^!_{k,\rho_L}$ denote the space of $\C[L'/L]$-valued weakly
holomorphic modular forms of weight $k$ and type $\rho_L$ for the
group $\tilde\Gamma$. The subspaces of holomorphic modular forms
(resp. cusp forms) are denoted by $M_{k,\rho_L}$ (resp.
$S_{k,\rho_L}$).


Now assume that
$k\leq 1$.  A twice continuously differentiable function $f:\H\to
\C[L'/L]$ is called a {\em harmonic weak Maass form} (of weight $k$
with respect to $\tilde\Gamma$ and $\rho_L$) if it satisfies:
\begin{enumerate}
\item[(i)]
$f\left(M\tau \right)=\phi(\tau)^{2k}\rho_L(M,\phi) f(\tau)$ for all
$(M,\phi)\in \tilde\Gamma$;
\item[(ii)]
there is a $C>0$ such that $f(\tau)=O(e^{C v})$ as $v\to \infty$;
%
\item[(iii)]
$\Delta_k f=0$.
\end{enumerate}
Here we have that
\begin{equation}\label{deflap}
\Delta_k := -v^2\left( \frac{\partial^2}{\partial u^2}+
\frac{\partial^2}{\partial v^2}\right) + ikv\left(
\frac{\partial}{\partial u}+i \frac{\partial}{\partial v}\right)
\end{equation}
is the usual weight $k$ hyperbolic Laplace operator, where
$\tau=u+iv$ (see \cite{BF}). We denote the vector space of these
harmonic weak Maass forms by  $\calH_{k,\rho_L}$. Moreover, we write
$H_{k,\rho_L}$ for the subspace of $f\in \calH_{k,\rho_L}$ whose
singularity at $\infty$ is locally given by the pole of a
meromorphic function. In particular, this means that  $f$ satisfies
\[
f(\tau)= P_f(\tau)+ O(e^{-\eps v}),\qquad v\to \infty,
\]
for some Fourier polynomial
\[
P_f(\tau)=\sum_{h\in L'/L}\sum_{\substack{n\in \Z+Q(h)\\-\infty\ll
n\leq 0}} c^+(n,h)e(n\tau) \frake_h
\]
and some $\eps>0$. In this situation, $P_f$ is uniquely determined
by $f$. It is called the {\em principal part} of $f$. (The space
$H_{k,\rho_L}$ was called $H^+_{k,L}$ in \cite{BF}.) We have
$M^!_{k,\rho_L}\subset H_{k,\rho_L}\subset \calH_{k,\rho_L}$. The
Fourier expansion of any $f\in H_{k,\rho_L}$ gives a unique
decomposition $f=f^++f^-$, where
\begin{subequations}
\label{deff}
\begin{align}
\label{deff+}
f^+(\tau)&= \sum_{h\in L'/L}\sum_{\substack{n\in \Q\\ n\gg-\infty}} c^+(n,h) e(n\tau)\frake_h,\\
\label{deff-} f^-(\tau)&= \sum_{h\in L'/L} \sum_{\substack{n\in \Q\\
n< 0}} c^-(n,h) W(2\pi nv) e(n\tau)\frake_h,
\end{align}
\end{subequations}
and $W(x)=W_k(x):= \int_{-2x}^\infty e^{-t}t^{-k}\,
dt=\Gamma(1-k,2|x|)$ for $x<0$.

Recall that there is an antilinear  differential operator defined by
\begin{equation}\label{defxi}
\xi_k:H_{k,\rho_L}\longrightarrow S_{2-k,\bar\rho_L},\qquad
f(\tau)\mapsto \xi_k(f)(\tau):=v^{k-2} \overline{L_k f(\tau)}.
\end{equation}
Here $L_k:=-2i v^2 \frac{\partial}{\partial \bar \tau}$ is the usual
Maass lowering operator. Note that $\xi_{2-k}\xi_k = \Delta_k$.
The Fourier expansion of  $\xi_k(f)$ is given by
\begin{align}
\label{fourierxi}
\xi_k(f)= -2 \sum_{h\in L'/L} \sum_{\substack{n\in \Q\\
n>0 }} (4\pi n)^{1-k} c^-(-n,h) e(n\tau)\frake_h.
\end{align}
The kernel of $\xi_k$ is equal to $M^!_{k,\rho_L}$. By Corollary 3.8
of \cite{BF}, the following sequence is exact
\begin{gather}
\label{ex-sequ}
\xymatrix{ 0\ar[r]& M^!_{k,\rho_L} \ar[r]& H_{k,\rho_L}
\ar[r]^{\xi_k}& S_{2-k,\bar\rho_L} \ar[r] & 0 }.
\end{gather}
Moreover, by Proposition 3.11 of \cite{BF}, for any given Fourier
polynomial of the form
\[
Q(\tau)= \sum_{h\in L'/L}\sum_{\substack{n\in \Z+Q(h) \\ n<0}}
a(n,h) e(n\tau)\frake_h
\]
with $a(n,h)=(-1)^{k-\sig(L)/2}a(n,-h)$, there is an $f\in
H_{k,\rho_L}$ with principal part $P_f=Q+\frakc$ for some
$T$-invariant constant $\frakc\in \C[L'/L]$.

Using the Petersson scalar product, we obtain a bilinear pairing
between  $M_{2-k,\bar\rho_L}$ and $H_{k,\rho_L}$ defined by
\begin{equation}\label{defpair}
\{g,f\}=\big( g,\, \xi_k(f)\big)_{2-k} :=\int_{\Gamma\bs \H}\langle
g,\, \xi_k(f)\rangle v^{2-k}\frac{du\,dv}{v^2},
\end{equation}
where $g\in M_{2-k,\bar\rho_L}$ and $f\in H_{k,\rho_L}$. If $g$ has
the Fourier expansion $g=\sum_{h,n}b(n,h) e(n\tau)\frake_h$, and we
denote the  expansion of $f$  as in \eqref{deff}, then by
Proposition 3.5 of \cite{BF} we have
\begin{equation}\label{pairalt}
\{g,f\}= \sum_{h\in L'/L} \sum_{n\leq 0}  c^+(n,h) b(-n,h).
\end{equation}
Hence $\{g,f\}$ only depends on the principal part of $f$. The
exactness of \eqref{ex-sequ} implies that the induced pairing
between $S_{2-k,\bar\rho_L}$ and $H_{k,\rho_L}/M^!_{k,\rho_L}$ is
non-degenerate. Moreover, the pairing is compatible with the natural
$\Q$-structures on $M_{2-k,\bar\rho_L}$ and
$H_{k,\rho_L}/M^!_{k,\rho_L}$ given by modular forms with rational
coefficients and harmonic weak Maass forms with rational principal
part, respectively.

We conclude this subsection with a notion which will be used later
in the paper. A harmonic weak Maass form $f\in H_{k,\rho_L}$ is said
to be {\em orthogonal to cusp forms of weight $k$} if for all $s\in
S_{k,\rho_L}$ we have
\[
(f,s)^{reg}:= \int_{\calF}^{reg} \langle f(\tau),s(\tau)\rangle
v^{k}\,\frac{du\,dv}{v^2}=0.
\]
Here $\calF$ denotes the standard fundamental domain for the action
of $\Sl_2(\Z)$ on $\H$, and the integral has been regularized as in
\cite{Bo1}.

\subsection{Siegel theta functions}

Now we recall some basic properties of theta functions associated to
indefinite quadratic forms. Let
\[
\Gr(V):=\{ z\subset V(\R): \quad \text{$z$ is a $b^+$-dimensional
subspace with $Q|_z>0$}\}
\]
be the Grassmannian of $2$-dimensional positive definite subspaces
of $V(\R)$. If $\lambda\in V(\R)$ and $z\in \Gr(V)$, we write
$\lambda_z$ and $\lambda_{z^\perp}$ for the orthogonal projection of
$\lambda$ to $z$ and $z^\perp$, respectively.

Let $\alpha,\beta\in V(\R)$. For $\tau=u+iv\in\H$ and $z\in \Gr(V)$,
we define a theta function by
\begin{align}
\label{def:vartheta} \vartheta_L(\tau, z, \alpha,\beta
):=v^{b^-/2}\sum_{\substack{\lambda\in L }} e\left(
\frac{(\lambda+\beta)_z^2}{2} \tau
+\frac{(\lambda+\beta)_{z^\perp}^2}{2}\bar\tau-(\lambda+\beta/2,\alpha)\right).
\end{align}

\begin{proposition}
\label{poisson} We have the transformation formula
\[
\vartheta_L(-1/\tau, z, -\beta,\alpha)
=\left(\frac{\tau}{i}\right)^{\frac{b^+-b^-}{2}} |L'/L|^{-1/2}
\vartheta_{L'}(\tau,z,\alpha,\beta).
\]
\end{proposition}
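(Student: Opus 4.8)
The plan is to prove the identity by Poisson summation, viewing $\vartheta_L$ as a sum over $L$ of a Gaussian Schwartz function on $V(\R)$ and computing its Fourier transform. Fix $z\in\Gr(V)$ and, for $x\in V(\R)$, set
\[
\phi_\tau(x):=e\!\left(\tfrac{x_z^2}{2}\tau+\tfrac{x_{z^\perp}^2}{2}\bar\tau\right),
\]
so that, after splitting off the constant part of the linear term, $\vartheta_L(\tau,z,\alpha,\beta)=v^{b^-/2}\,e((\alpha,\beta)/2)\sum_{\lambda\in L}\phi_\tau(\lambda+\beta)\,e(-(\lambda+\beta,\alpha))$. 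Since $v=\Im(\tau)>0$ while $Q|_z>0$ and $Q|_{z^\perp}<0$, the function $\phi_\tau$ is a genuinely decaying Gaussian on $V(\R)$, hence Schwartz, and all the series converge absolutely.

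First I would compute the Fourier transform of $\phi_\tau$ with respect to the self-dual Haar measure attached to the bilinear form $(\cdot,\cdot)$, namely $\widehat{\phi_\tau}(\xi)=\int_{V(\R)}\phi_\tau(x)\,e(-(x,\xi))\,dx$. Diagonalizing over the orthogonal splitting $V(\R)=z\oplus z^\perp$ reduces this to a product of one-dimensional Gaussian integrals: on each of the $b^+$ positive-definite coordinates one uses $\int_\R e^{\pi i\tau t^2}e^{-2\pi i ts}\,dt=(\tau/i)^{-1/2}e^{\pi i(-1/\tau)s^2}$ (valid because $\Re(-i\tau)=v>0$), and on each of the $b^-$ negative-definite coordinates the complex-conjugate identity. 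This yields, for a fixed choice of the square-root branches,
\[
\widehat{\phi_\tau}(\xi)=(\tau/i)^{-b^+/2}\,\overline{(\tau/i)}^{\,-b^-/2}\,\phi_{-1/\tau}(\xi).
\]
This Fourier-transform computation, together with the careful branch choice, is the technical heart of the argument.

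Next I would apply Poisson summation to the shifted, modulated function $G(x):=\phi_\tau(x)e(-(x,\alpha))$. With the self-dual measure the covolume of $L$ equals $|L'/L|^{1/2}$, the modulation by $-\alpha$ becomes the shift $\mu\mapsto\mu+\alpha$, and the shift by $\beta$ produces the phase $e((\mu,\beta))$, so that
\[
\sum_{\lambda\in L}\phi_\tau(\lambda+\beta)\,e(-(\lambda+\beta,\alpha))=|L'/L|^{-1/2}\sum_{\mu\in L'}\widehat{\phi_\tau}(\mu+\alpha)\,e((\mu,\beta)).
\]
Substituting the formula for $\widehat{\phi_\tau}$ gives a master transformation expressing $\vartheta_L(\tau,z,\alpha,\beta)$ through an $L'$-sum of $\phi_{-1/\tau}$. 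Reindexing $\mu\mapsto-\mu$ (legitimate since $L'=-L'$ and $\phi_\tau$ is even) lets me recognize that $L'$-sum, up to an explicit phase and a power of $v$, as a $\vartheta_{L'}$.

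Finally, specializing this master formula to the arguments $(-1/\tau,z,-\beta,\alpha)$ and collecting constants is pure bookkeeping. Using $\Im(-1/\tau)=v/|\tau|^2$ and $(-1/\tau)/i=i/\tau$, one checks that the power-of-$v$ factors combine as $(v/|\tau|^2)^{b^-/2}v^{-b^-/2}=|\tau|^{-b^-}$, the phases $e(\pm(\alpha,\beta)/2)$ cancel, and the Gaussian prefactor collapses via $((-1/\tau)/i)^{-b^+/2}=(\tau/i)^{b^+/2}$ together with $\overline{((-1/\tau)/i)}^{\,-b^-/2}\,|\tau|^{-b^-}=(\tau/i)^{-b^-/2}$ into the single factor $(\tau/i)^{(b^+-b^-)/2}$, yielding exactly the claimed identity. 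The one point demanding genuine care, beyond routine algebra, is the consistent tracking of the square-root branches through the negative-definite directions and the complex conjugation, so that the final exponent comes out as $(b^+-b^-)/2$ rather than with a wrong sign or a spurious power of $i$.
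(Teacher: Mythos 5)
Your argument is correct and is precisely the standard Poisson-summation proof that the paper invokes (it simply cites Theorem 4.1 of Borcherds rather than writing out the Gaussian Fourier transform, the self-dual measure, and the branch bookkeeping). The details you supply — the factorization over $z\oplus z^\perp$, the covolume $|L'/L|^{1/2}$, and the cancellation of the phases $e(\pm(\alpha,\beta)/2)$ — all check out.
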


\begin{proof}
  This follows by Poisson summation (for example, see
  Theorem 4.1 of \cite{Bo1}).
\end{proof}

The proposition can be used to define vector valued Siegel theta
functions of weight $k=(b^+-b^-)/2$ and type $\rho_L$ (see Section 4
of \cite{Bo1}).

\subsection{A lattice related to $\Gamma_0(N)$}

\label{subsect:1.3}

Let $N$ be a positive integer. We consider the rational quadratic
space
\begin{align}
\label{defV} V:=\{X\in \Mat_2(\Q):\; \tr(X)=0\}
\end{align}
with the quadratic form $Q(X):=-N\det(X)$. The corresponding
bilinear form is given by $(X,Y)=N\tr(XY)$ for $X,Y\in V$. The
signature of $V$ is $(2,1)$.
The even Clifford algebra $C^0(V)$ of $V$ can be identified with
$\Mat_2(\Q)$. The Clifford norm on $C^0(V)$ is identified with the
determinant. The group $\Gl_2(\Q)$ acts on $V$ by
\[
\gamma.X=\gamma X\gamma^{-1},\qquad \gamma\in \Gl_2(\Q),
\]
leaving the quadratic form invariant,  inducing isomorphisms of
algebraic groups over $\Q$
\[
\Gl_2\cong \GSpin(V), \qquad \Sl_2\cong \Spin(V).
\]
There is an isometry from $(V,Q)$ to the trace zero part of
$(C^0(V), -N\det)$.
We let $L$ be the lattice
\begin{align}
\label{latticeN}
 L:=\left\{\zxz{b}{-a/N}{c}{-b}:\quad a,b,c\in
\Z\right\}.
\end{align}
Then the dual lattice is given by
\begin{align}
\label{latticeN2}
 L':=\left\{\zxz{b/2N}{-a/N}{c}{-b/2N}:\quad
\text{$a,b,c\in \Z$} \right\}.
\end{align}
We identify $L'/L$ with $\Z/2N\Z$. Here the quadratic form on $L'/L$
is identified with the quadratic form $x\mapsto x^2$ on $\Z/2N\Z$.
The level of $L$ is $4N$.

If $D\in \Z$, let $L_D$ be the set of vectors $X\in L'$ with
$Q(X)=D/4N$.  Notice that $L_D$ is empty unless $D$ is a square
modulo $4N$. For $r\in L'/L$ with $r^2\equiv D \pmod{4N}$ we define
\[
L_{D,r}:=\{ X\in L':\quad\text{$Q(X)=D/4N$ and $X\equiv
r\pmod{L}$}\}.
\]
We write $L_{D}^0$ for the subset of {\em primitive} vectors in
$L_{D}$, and $L_{D,r}^0$ for the primitive vectors in $L_{D,r}$,
respectively. If $X=\kzxz{b/2N}{-a/N}{c}{-b/2N}\in L_{D,r}$, then
the matrix
\begin{equation}\label{eq:qf}
\psi(X):=\zxz{a}{b/2}{b/2}{Nc}=X\zxz{0}{N}{-N}{0}
\end{equation}
defines an integral binary quadratic form of discriminant
$D=b^2-4Nac=4NQ(X)$ with $b\equiv  r\pmod{2N}$.

It is easily seen that the natural homomorphism $\SO(L)\to
\Aut(L'/L)$ is surjective. Its kernel is called the discriminant
kernel subgroup, which we denote by $\Gamma(L)$. We write $\SO^+(L)$
for the intersection of $\SO(L)$ and the connected component of the
identity of $\SO(V)(\R)$.
The group $\Gamma_0(N)\subset \Spin(V)$ takes $L$ to itself and acts
trivially on $L'/L$.

\begin{proposition}
\label{g0n} The image of $\Gamma_0(N)$ in $\SO(L)$ is equal to
 $\Gamma(L)\cap \SO^+(L)$. The image in $\SO(L)$ of  the extension of  $\Gamma_0(N)$ by all
Atkin-Lehner involutions is equal to
 $\SO^+(L)$.
\end{proposition}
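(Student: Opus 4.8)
The plan is to realize both statements as comparisons of images of arithmetic groups under the conjugation map $c\colon\Gl_2(\Q)\to\SO(V)(\Q)$ attached to the isomorphisms $\Sl_2\cong\Spin(V)$ and $\Gl_2\cong\GSpin(V)$. The kernel of $c$ is the group of scalar matrices, and the induced map $\Gl_2/\mathbb{G}_m\to\SO(V)$ is an isomorphism of algebraic groups because $V$ has signature $(2,1)$; hence $c$ is surjective on $\Q$-points, and its restriction to the matrices of positive determinant $\Gl_2^+(\Q)$ surjects onto $\SO^+(V)(\Q)$, since $\Sl_2(\R)=\Spin(V)(\R)$ is connected and covers the identity component of $\SO(V)(\R)$. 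The inclusions ``$\subseteq$'' are then immediate: $\Gamma_0(N)$ lies in the connected group $\Sl_2(\R)$ and acts trivially on $L'/L$, so its image lies in $\Gamma(L)\cap\SO^+(L)$, and each Atkin--Lehner matrix preserves $L$ under conjugation and has positive determinant, so its image lies in $\SO^+(L)$.

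For the reverse inclusion in the first assertion I would take $\sigma\in\Gamma(L)\cap\SO^+(L)$, lift it through $c$ to some $g\in\Gl_2^+(\Q)$, and rescale so that $g=\kabcd$ is a primitive integral matrix with $\det g=m>0$. Conjugating the standard $\Z$-basis of $L$, and the generator of $L'/L\cong\Z/2N\Z$, by $g$ and requiring that the images again lie in $L$ (respectively, that $g$ act trivially on $L'/L$) produces an explicit system of divisibility relations among $a,b,c,d$, $m$ and $N$; representative ones are $m\mid a^2$, $m\mid d^2$, $m\mid bd$, $mN\mid c^2$, and $mN\mid bc$, the last coming from triviality on $L'/L$. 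A prime-by-prime valuation analysis of this system, using $\det g=m$ and primitivity, forces $m=1$; the residual conditions $N\mid c^2$ and $N\mid ac$, together with $\gcd(a,c)=1$ (which follows from $\det g=1$), then give $N\mid c$, so that $g\in\Gamma_0(N)$. As $-I\in\Gamma_0(N)$ maps to the identity of $\SO(L)$, the element $\sigma$ is precisely the image of $g$, which proves the first assertion.

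For the second assertion I would combine the first with the surjection $\SO(L)\to\Aut(L'/L)$ of kernel $\Gamma(L)$: restricting to the identity component identifies $\Gamma(L)\cap\SO^+(L)$, that is the image of $\Gamma_0(N)$, with the kernel of $\SO^+(L)\to\Aut(L'/L)$. It therefore suffices to show that the Atkin--Lehner involutions already surject onto $\Aut(L'/L)$. Writing $\Aut(L'/L)=\{u\in(\Z/2N\Z)^\times:\ u^2\equiv 1\pmod{4N}\}$, a direct computation shows that the Fricke involution $W_N$ acts as multiplication by $-1$, and that $W_Q$ acts as the element which is $-1$ on the part of $\Z/2N\Z$ supported at the primes dividing the exact divisor $Q$ and $+1$ elsewhere. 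Thus $Q\mapsto W_Q$ is a homomorphism from the group of exact divisors of $N$ to $\Aut(L'/L)$ that is injective (as $W_Q$ acts nontrivially for $Q\neq 1$), and a count of both sides shows it is bijective. Consequently any $\sigma\in\SO^+(L)$ may be corrected by the image of a suitable $W_Q$ so as to act trivially on $L'/L$, placing it in the image of $\Gamma_0(N)$; hence the group generated by $\Gamma_0(N)$ and the Atkin--Lehner involutions has image exactly $\SO^+(L)$.

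The main obstacle is the valuation bookkeeping in the second paragraph: one must verify that the lattice conditions together with triviality on the discriminant group genuinely exclude every $m>1$, which is exactly where the precise shape of $L$ and $L'$ is used. A secondary subtlety lies in the counting at the end, since the quadratic form on $L'/L\cong\Z/2N\Z$ and the congruence $u^2\equiv 1\pmod{4N}$ behave differently at the prime $2$; I would dispose of this by treating $N$ odd, $N\equiv 2\pmod{4}$, and $4\mid N$ as separate elementary cases, matching the order of $\Aut(L'/L)$ with the number of exact divisors of $N$.
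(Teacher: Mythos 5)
The paper states Proposition \ref{g0n} without proof, so there is no argument of the authors to compare yours against; I can only assess your proposal on its own terms, and it is correct. Your route --- identifying $\SO(V)$ with $\Gl_2/\mathbb{G}_m$ so that $\SO^+(V)(\Q)$ is the image of $\Gl_2^+(\Q)$, rescaling a lift to a primitive integral matrix of determinant $m$, forcing $m=1$ by local analysis of the lattice-preservation and discriminant-kernel conditions, and then realizing $\Aut(L'/L)$ by Atkin--Lehner matrices via the count $|\Aut(L'/L)|=2^{\omega(N)}$ --- is the standard way to prove this statement, and each step goes through as you describe. One bookkeeping point to be careful about: the ``representative'' divisibility relations you list ($m\mid a^2$, $m\mid d^2$, $m\mid bd$, $mN\mid c^2$, $mN\mid bc$) do \emph{not} by themselves rule out primes $p\mid m$ with $p\nmid N$; for those primes you need the relation $m\mid Nb^2$ coming from conjugating the basis vector $\kzxz{0}{0}{1}{0}$, which forces $p\mid b$ and contradicts primitivity, while for $p\mid N$ your listed relations do suffice (there $p\nmid b$, and the valuations of $ad$ and $bc$ both exceed $v_p(m)$, contradicting $ad-bc=m$). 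With the full system the conclusion $m=1$ and then $N\mid c$ is correct, and the rest of the argument, including the identification of $\Aut(L'/L)$ with $\{u\in(\Z/2N\Z)^\times: u^2\equiv 1\pmod{4N}\}$ and the matching count with exact divisors (split into cases at the prime $2$), is sound.
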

In particular, $\Gamma_0(N)$ acts on $L_{D,r}$ and $L^0_{D,r}$. By
reduction theory, the number of orbits of $L^0_{D,r}$ is finite. The
number of orbits of $L_{D,r}$ is finite if $D\neq 0$.


%

%

\section{Differentials of the third kind}

\label{sect:difftk}

We shall construct differentials of the third kind associated to
twisted Heegner divisors using regularized Borcherds products. We
begin by recalling some general facts concerning such differentials
\cite{Sch} and \cite{Gr}. Let $X$ be a non-singular projective curve
over $\C$ of genus $g$.
 A differential of the first kind on $X$
is a holomorphic $1$-form. A differential of the second kind is a
meromorphic $1$-form on $X$ whose residues all vanish. A
differential of the third kind on $X$ is a meromorphic $1$-form on
$X$ whose poles are all of first order with residues in $\Z$.
In Section \ref{sect:candiff} and the subsequent sections
we will relax the condition on the integrality of the residues.
Let
$\psi$ be a differential of the third kind on $X$ that has poles at
the points $P_j$, with residues $c_j$, and is holomorphic elsewhere.
Then the {\it residue divisor} of $\psi$ is
\[
\res(\psi):=\sum_j c_j P_j.
\]
By the residue theorem, the restriction of this divisor to any
component of $X$ has degree $0$.

Conversely, if $D=\sum_j c_j P_j$ is any divisor on $X$ whose
restriction to any component of $X$ has degree $0$, then the Riemann-Roch theorem and Serre duality imply that there is a differential $\psi_D$ of the third
kind with residue divisor $D$ (see e.g.~\cite{Gr}, p.~233). Moreover, $\psi_D$ is determined by
this condition up to addition of a differential of the first kind.
Let $U=X\setminus\{ P_j\}$. The canonical homomorphism $H_1(U,\Z)\to
H_1(X,\Z)$ is surjective and its kernel is spanned by the classes of
small circles $\delta_j$ around the points $P_j$. In particular, we
have $\int_{\delta_j} \psi_D =2\pi i c_j$.

Using the Riemann period relations, it can be shown that there is a
unique  differential  of the third kind $\eta_D$ on $X$ with residue
divisor $D$ such that
\[
\Re \left(\int_{\gamma} \eta_D\right) =0
\]
for all $\gamma\in H_1(U,\Z)$. It is called the \emph{canonical
differential of the third kind} associated with $D$. For instance,
if $f$ is a meromorphic function on $X$ with divisor $D$, then
$df/f$ is a canonical differential of the third kind on $X$ with
residue divisor $\dv(f)$. A different characterization of $\eta_D$
is given in Proposition 1 of \cite{Sch}.

\begin{proposition}
\label{dtk} The differential $\eta_D$ is the unique differential of
the third kind with residue divisor $D$ which can be written as
$\eta_D=\partial h$, where $h$ is a harmonic function on $U$.
\end{proposition}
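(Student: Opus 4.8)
The plan is to prove the two assertions separately: first that the canonical differential $\eta_D$ admits a representation $\eta_D=\partial h$ with $h$ harmonic on $U$, and second that this property singles out $\eta_D$ among all differentials of the third kind with residue divisor $D$. The bridge between the period normalization defining $\eta_D$ and the existence of the potential $h$ is the elementary observation that a real closed $1$-form on $U$ all of whose periods vanish is exact.

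For existence, I would start from the fact that $\eta_D$ is holomorphic on $U$, so that the real $1$-form $\omega:=\eta_D+\overline{\eta_D}$ is closed on $U$. For every cycle $\gamma\in H_1(U,\Z)$ one has $\int_\gamma\omega=2\Re\int_\gamma\eta_D=0$ by the defining property of the canonical differential. Hence $\omega$ is exact, say $\omega=dh$ for a single-valued real-valued function $h$ on $U$. Comparing types in $dh=\partial h+\bar\partial h=\eta_D+\overline{\eta_D}$ and using that $\eta_D$ is of type $(1,0)$ yields $\partial h=\eta_D$ and $\bar\partial h=\overline{\eta_D}$. Since $\bar\partial\partial h=\bar\partial\eta_D=0$, the function $h$ is harmonic on $U$, which gives the desired representation.

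For uniqueness, suppose $\eta=\partial h$ is a differential of the third kind with residue divisor $D$, where $h$ is a real harmonic function on $U$. Because $h$ is real we have $\bar\partial h=\overline{\partial h}=\overline{\eta}$, so $dh=\eta+\overline{\eta}$. As $h$ is single-valued, $\int_\gamma dh=0$ for every $\gamma\in H_1(U,\Z)$, and therefore $\Re\int_\gamma\eta=0$ for all such $\gamma$. By the uniqueness of the differential of the third kind with residue divisor $D$ and vanishing real periods established above, I conclude $\eta=\eta_D$.

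The only genuinely delicate point is the single-valuedness of $h$, and this is precisely what the period normalization of $\eta_D$ buys: the vanishing of all periods---including those around the small loops $\delta_j$, where the period equals $2\pi i c_j$ and is automatically purely imaginary---is exactly the condition needed to integrate $\eta_D+\overline{\eta_D}$ to a global function. If one wishes to allow complex-valued harmonic $h$ in the uniqueness statement, one writes $h=h_1+ih_2$ with $h_1,h_2$ real harmonic; matching the real residues of $\eta$ forces the holomorphic $1$-form $\partial h_2$ to have vanishing residues and purely imaginary periods, hence to vanish by the same uniqueness, reducing matters to the real case treated above.
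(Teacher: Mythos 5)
Your argument is correct. Note that the paper itself offers no proof of this proposition: it simply quotes it as Proposition 1 of Scholl's article, immediately after asserting (via the Riemann period relations) the existence and uniqueness of the differential $\eta_D$ with $\Re\int_\gamma\eta_D=0$ for all $\gamma\in H_1(U,\Z)$. What you have written is the standard derivation of Scholl's characterization from that normalization, and both directions are sound: for existence, the real closed form $\eta_D+\overline{\eta_D}$ has vanishing periods (the periods around the $\delta_j$ being $2\pi i c_j$, automatically purely imaginary since the residues are real), hence integrates to a single-valued real $h$ with $\partial h=\eta_D$ by type decomposition, and $h$ is harmonic because $\bar\partial\partial h=\bar\partial\eta_D=0$; for uniqueness, single-valuedness of $h$ forces $\Re\int_\gamma\partial h=0$ for all cycles, which pins down $\partial h=\eta_D$ by the uniqueness already granted. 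Your closing remark correctly isolates the only delicate point (single-valuedness of the potential), and the reduction of the complex-valued case to the real one via the reality of the residues is also fine. The intended application in the paper is in any case the real case, with $h=-\tfrac12\Phi_{\Delta,r}(z,f)$ a real-valued Green function.
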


Let $\bar\Q\subset\C$ be a fixed algebraic closure of $\Q$. We now
assume that the curve $X$ and the divisor $D$ are defined over a
number field $F\subset \bar\Q$. The following theorem by Scholl on the
transcendence of canonical differentials of the third kind will be
important for us (see Theorem 1 of \cite{Sch}). Its proof is based
on results by Waldschmidt on the transcendence of periods of
differentials of the third kind (see Section 5.2 of \cite{W}, and
Theorem 2 of \cite{Sch}).

\begin{theorem}[Scholl]
\label{scholl} If some non-zero multiple of $D$ is a principal
divisor, then $\eta_D$ is defined over $F$. Otherwise, $\eta_D$ is
not defined over $\bar \Q$.
\end{theorem}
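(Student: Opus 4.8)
The statement splits into two implications, which I would treat separately.

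\emph{The principal case.} Suppose $mD=\dv(f)$ for some integer $m\neq 0$ and some $f$ in the function field of $X$. Then $df/f$ is a differential of the third kind with residue divisor $\dv(f)=mD$, so $\tfrac1m\,df/f$ has residue divisor $D$. For any $\gamma\in H_1(U,\Z)$ the integral $\int_\gamma df/f$ lies in $2\pi i\Z$, hence is purely imaginary; therefore $\tfrac1m\,df/f$ meets the defining normalization of the canonical differential, and by the uniqueness in the definition of $\eta_D$ we conclude $\eta_D=\tfrac1m\,df/f$. It then remains to arrange $f$ over $F$. Writing $mD=A-B$ with $A,B$ effective divisors defined over $F$, for each $\sigma\in\Gal(\bar\Q/F)$ the conjugate $f^\sigma$ again has divisor $mD$, so $f^\sigma/f$ is a nonzero constant on each component. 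Thus the line $\bar\Q\cdot f$ is Galois stable and descends to an $F$-rational line, so we may take $f\in F(X)^\times$. Then $\eta_D=\tfrac1m\,df/f$ is a rational differential with coefficients in $F$, i.e.\ defined over $F$.

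\emph{The non-principal case.} Here I would argue by contraposition: assuming $\eta_D$ is defined over $\bar\Q$, I would show that the class $[D]$ is a torsion point of the Jacobian $J=\Pic^0(X)$, which is precisely the condition that some nonzero multiple of $D$ be principal. Fix a symplectic basis $a_1,b_1,\dots,a_g,b_g$ of $H_1(X,\Z)$ disjoint from $\supp(D)$ (hence giving classes in $H_1(U,\Z)$) and a basis $\omega_1,\dots,\omega_g$ of holomorphic differentials defined over $\bar\Q$. The Riemann bilinear relation between differentials of the first and third kind reads
\[
\sum_{i=1}^g\Big(\int_{a_i}\!\omega_j\int_{b_i}\!\eta_D-\int_{b_i}\!\omega_j\int_{a_i}\!\eta_D\Big)=2\pi i\int_c\omega_j,
\]
where $c$ is a $1$-chain in $U$ with $\partial c=D$, so that $A(D):=\big(\int_c\omega_j\big)_j$ is a lift under the Abel--Jacobi map of $[D]\in J$. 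Meanwhile the canonical normalization forces every period $\int_{a_i}\eta_D,\int_{b_i}\eta_D$ to be purely imaginary.

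The heart of the matter, and the step I expect to be the main obstacle, is the transcendence input: the periods of a differential of the third kind that is defined over $\bar\Q$ on a curve over $\bar\Q$ are severely constrained. Invoking Waldschmidt's theorem on periods of the third kind (Section 5.2 of \cite{W}; cf.\ Theorem 2 of \cite{Sch}), the only $\bar\Q$-linear relations among $2\pi i$, the first-kind periods $\int_{a_i}\omega_j,\int_{b_i}\omega_j$, and the third-kind periods $\int_{a_i}\eta_D,\int_{b_i}\eta_D$ are those imposed by the geometry. Feeding the reality of the third-kind periods into the bilinear relation above, these constraints are compatible only if $A(D)$ lies in $\Lambda\otimes\Q$, where $\Lambda$ is the period lattice of $J$; equivalently $[D]$ is torsion in $J$, so some multiple of $D$ is principal. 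Recasting this as an application of W\"ustholz's analytic subgroup theorem to the semiabelian extension of $J$ by $\mathbb{G}_m$ attached to $D$ would give an alternative and perhaps cleaner route to the same conclusion; either way the transcendence theorem is the indispensable ingredient, since the elementary period computations only relate $\eta_D$ to $[D]$ without by themselves forcing algebraicity.
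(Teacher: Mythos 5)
First, a point of comparison: the paper does not prove this statement at all --- it is quoted as Theorem~1 of Scholl's paper \cite{Sch}, together with the remark that its proof rests on Waldschmidt's transcendence results for periods of differentials of the third kind (Section~5.2 of \cite{W}, recorded as Theorem~2 of \cite{Sch}). So your attempt can only be measured against Scholl's argument, not against anything in this paper. Your principal-divisor case is complete and correct, and is the standard argument: $\tfrac1m\,df/f$ has residue divisor $D$, its periods over $H_1(U,\Z)$ lie in $\tfrac{2\pi i}{m}\Z$ and are therefore purely imaginary, uniqueness identifies it with $\eta_D$, and Hilbert~90 applied to the Galois-stable line $\bar\Q\cdot f$ lets you take $f\in F(X)^\times$.

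The non-principal case is where the content lies, and there your argument has a genuine gap. Two concrete problems. (a) The transcendence input you invoke --- that the only $\bar\Q$-linear relations among $2\pi i$, the first-kind periods and the third-kind periods are ``those imposed by the geometry'' --- is not Waldschmidt's theorem; it is essentially the Grothendieck period conjecture for $H^1(U)$, which is open. What is actually available (Scholl's Theorem~2) is a single, much more specific statement, roughly: if $\psi$ is a third-kind differential defined over $\bar\Q$ whose residue divisor has no non-zero principal multiple, then $\int_\gamma\psi\in 2\pi i\,\Q$ forces the image of $\gamma$ in $H_1(X,\Z)$ to vanish. (b) Even granting a strong independence statement over $\bar\Q$, the normalization $\Re\int_\gamma\eta_D=0$ is an $\R$-linear condition on the periods, not a $\bar\Q$-linear relation, so as you have set things up the transcendence theorem cannot see it; the sentence ``feeding the reality of the third-kind periods into the bilinear relation \dots forces $A(D)\in\Lambda\otimes\Q$'' is precisely the step that is never carried out and does not follow from what precedes it. The missing device is complex conjugation: because $\eta_D$ is canonical and assumed defined over $\bar\Q$, its complex conjugate is the canonical differential $\eta_{D^c}$ of the conjugate divisor on the conjugate curve $X^c$, again defined over $\bar\Q$, and the normalization then says that the cycle $(\gamma,\gamma^c)$ on $X\sqcup X^c$ pairs with $(\eta_D,\eta_{D^c})$ to give exactly $0\in 2\pi i\,\Q$; applying the precise form of Waldschmidt's theorem to this configuration is what forces a multiple of $(D,D^c)$, hence of $D$, to be principal. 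Without some such conversion of ``purely imaginary'' into an honest algebraic relation, the contrapositive does not close. (Your closing suggestion of the analytic subgroup theorem applied to the extension of $J$ by $\mathbb{G}_m$ attached to $[D]$ is indeed a viable modern substitute, but it faces the same conversion issue and is likewise only gestured at.)
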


\subsection{Differentials of the third kind on modular curves}

We consider the modular curve $Y_0(N):=\Gamma_0(N)\bs\H$. By adding
cusps in the usual way, we obtain the compact modular curve
$X_0(N)$. It is well known that $X_0(N)$ is defined over $\Q$. The
cusps are defined over $\Q(\zeta_N)$, where $\zeta_N$ denotes a
primitive $N$-th root of unity. The action of the Galois group on
them can be described explicitly (for example, see \cite{Ogg}). In
particular, it turns out that the cusps are defined over $\Q$ when
$N$ or $N/2$ is square-free. Moreover, by the Manin-Drinfeld
theorem, any divisor of degree $0$ supported on the cusps is a
multiple of a principal divisor. We let $J$ be the Jacobian
of $X_0(N)$, and let $J(F)$ denote its points over any number field
$F$. They correspond to divisor classes of degree zero on $X_0(N)$
which are rational over $F$. By the Mordell-Weil theorem, $J(F)$ is
a finitely generated abelian group.

Let $\psi$ be a differential of the third kind on $X_0(N)$. We may
write $\psi=2\pi i f \,dz$, where $f$ is a meromorphic modular form
 of weight 2 for the group $\Gamma_0(N)$.
All poles of $f$ lie on $Y_0(N)$ and are of first order, and they
have residues in $\Z$. In a neighborhood of the cusp $\infty$, the
modular form $f$ has a Fourier expansion
\[
f(z)=\sum_{n\geq 0}^\infty a(n) q^n.
\]
The constant coefficient $a(0)$ is the residue of $\psi$ at
$\infty$. We have analogous expansions at the other cusps. According
to the $q$-expansion principle, $\psi$ is defined over a number
field $F$, if and only if all Fourier coefficients $a(n)$ are
contained in $F$. Combining these facts with Theorem \ref{scholl},
we obtain the following criterion.

\begin{theorem}
\label{cd2} Let $D$ be a divisor of degree $0$ on $X_0(N)$ defined
over $F$. Let $\eta_D$ be the canonical differential of the third
kind associated to $D$ and write $\eta_D=2\pi i f \,dz$. If some
non-zero multiple of $D$ is a principal divisor, then all
the coefficients $a(n)$ of $f$ at the cusp $\infty$ are contained in
$F$. Otherwise, there exists an $n$ such that $a(n)$ is
transcendental.
\end{theorem}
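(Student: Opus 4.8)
The plan is to combine Scholl's transcendence theorem (Theorem \ref{scholl}) with the $q$-expansion principle, which gives a Galois-theoretic characterization of when a differential is defined over a number field $F$. The key observation is that both halves of the dichotomy in Theorem \ref{cd2} are direct translations of the corresponding halves of Theorem \ref{scholl} under the identification $\eta_D = 2\pi i f\,dz$. First I would fix the setup: since $D$ is a divisor of degree $0$ on $X_0(N)$ defined over $F$, the canonical differential of the third kind $\eta_D$ exists and is uniquely characterized by the vanishing of the real parts of all its periods (as recalled in the discussion preceding Proposition \ref{dtk}). Writing $\eta_D = 2\pi i f\,dz$ expresses it via a meromorphic modular form $f$ of weight $2$ on $\Gamma_0(N)$, whose Fourier coefficients $a(n)$ at $\infty$ are the objects of interest.

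For the first assertion, suppose some non-zero multiple $mD$ of $D$ is principal. Then by Theorem \ref{scholl}, $\eta_D$ is defined over $F$. I would then invoke the $q$-expansion principle, as stated in the paragraph before the theorem: a differential of the third kind on $X_0(N)$ is defined over $F$ if and only if all the Fourier coefficients $a(n)$ of the associated weight $2$ form at the cusp $\infty$ lie in $F$. Hence all the $a(n)$ are contained in $F$, which is precisely the first conclusion. The only subtlety worth flagging is that $X_0(N)$ is defined over $\Q$ (recalled at the start of the subsection), so the $q$-expansion principle applies with $\Q$ as the base field and the cusp $\infty$ as a rational point, making the statement about rationality of coefficients meaningful.

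For the second assertion, suppose no non-zero multiple of $D$ is principal. Then Theorem \ref{scholl} gives the stronger conclusion that $\eta_D$ is not defined over $\bar\Q$. I would argue by contraposition against the $q$-expansion principle: if \emph{every} coefficient $a(n)$ were algebraic, one could not immediately conclude $\eta_D$ is defined over a single number field, since lying in $\bar\Q$ does not mean lying in a common finite extension. So here the argument must be handled with a little care. The cleanest route is to observe that $\eta_D - \eta'$, where $\eta'$ ranges over differentials of the first kind with algebraic $q$-expansions, forms an affine space, and to use that the canonical differential is rigidly pinned down by $D$; I would instead argue directly that if all $a(n)$ were algebraic, then because $f$ is a meromorphic modular form of weight $2$ on $\Gamma_0(N)$ (hence lies in a finite-dimensional space once we bound the poles, which are determined by $D$), its coefficients would generate a finite extension of $\Q$, contradicting $\eta_D \notin \bar\Q$. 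Thus there must exist some $n$ for which $a(n)$ is transcendental.

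The main obstacle is the second half: passing from ``$\eta_D$ is not defined over $\bar\Q$'' to ``some individual coefficient $a(n)$ is transcendental.'' One must rule out the scenario where each $a(n)$ is separately algebraic but the coefficients collectively generate an infinite extension. The resolution uses the finite-dimensionality of the relevant space of meromorphic weight $2$ forms with prescribed polar divisor (governed by $D$, which is defined over $F$): such a space has a basis defined over $F$, so any element with all algebraic coefficients is defined over a number field, forcing $\eta_D$ over $\bar\Q$ and yielding the contradiction. I would make this finite-dimensionality argument explicit, as it is the one genuinely non-formal step beyond quoting Theorem \ref{scholl} and the $q$-expansion principle.
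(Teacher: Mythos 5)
Your proof is correct and follows essentially the same route as the paper, which obtains the theorem by simply combining the $q$-expansion principle with Theorem \ref{scholl}. The finite-dimensionality argument you supply in the second half (to pass from ``every $a(n)$ is algebraic'' to ``$\eta_D$ is defined over a number field'') fills in a point the paper leaves implicit; equivalently, one may apply the $q$-expansion principle directly over $\bar\Q$, noting that a differential on $X_0(N)_{\bar\Q}$ is automatically defined over some finite extension of $\Q$.
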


\section{Twisted Siegel theta functions}

\label{sect:2} To define a generalized theta lift in the next
section, we first must consider twisted Siegel theta functions. We
begin with some notation. Let $N$ be a positive integer, and let $L$
be the lattice defined in Section \ref{subsect:1.3}.
Let $\Delta\in \Z$ be a fundamental discriminant and $r \in \Z$ such
that $\Delta\equiv r^2\pmod{4N}$.  Following \cite{GKZ}, we define a
generalized genus character for
$\lambda=\kzxz{b/2N}{-a/N}{c}{-b/2N}\in L'$ by putting
\[
\chi_{\Delta}(\lambda)=\chi_{\Delta}([a,b,Nc]):=
\begin{cases}
\left(\frac{\Delta}{n}\right),&\text{if $\Delta\mid b^2-4Nac$
and $(b^2-4Nac)/\Delta$ is a}\\
&\text{square modulo $4N$ and $\operatorname{gcd}(a,b,c,\Delta)=1$,}\\
0,& \text{otherwise}.
\end{cases}
\]
Here $[a,b,Nc]$ is the integral binary quadratic form corresponding
to $\lambda$, and $n$ is any integer prime to $\Delta$ represented
by one of the quadratic forms $[N_1a,b,N_2 c]$ with $N_1N_2=N$ and
$N_1,N_2>0$ (see Section 1.2 of \cite{GKZ}, and also Section 1 of
\cite{Sk2}).

The function $\chi_{\Delta}$ is invariant under the action of
$\Gamma_0(N)$ and under the action of all Atkin-Lehner involutions.
Hence it is invariant under $\SO^+(L)$.  It can be computed by the
following explicit formula (see Section I.2, Proposition 1 of
\cite{GKZ}): If $\Delta=\Delta_1\Delta_2$ is a factorization of
$\Delta$ into discriminants, and $N=N_1 N_2$ is a factorization of
$N$ into positive factors such that $(\Delta_1, N_1 a) = (\Delta_2,
N_2c)=1$, then
\begin{align}
\label{chiexplicit} \chi_{\Delta}([a,b,Nc])=\leg{\Delta_1}{N_1
a}\leg{\Delta_2}{N_2 c}.
\end{align}
If no such factorizations of $\Delta$ and $N$ exist, then we have
$\chi_{\Delta}([a,b,Nc])=0$.

We define a twisted variant of the Siegel theta function for $L$ as
follows. For a coset $h\in L'/L$, and variables $\tau=u+iv\in \H$,
$z\in \Gr(V)$, we put
\begin{align}
\label{defthetah}
\theta_{\Delta,r ,h}(\tau,z)&:=v^{1/2}\sum_{\substack{\lambda\in L+r  h \\
Q(\lambda) \equiv \Delta Q(h) \;(\Delta)}} \chi_{\Delta}(\lambda)
e\left( \frac{1}{|\Delta|} \frac{\lambda_z^2}{2} \tau
+\frac{1}{|\Delta|}\frac{\lambda_{z^\perp}^2}{2}\bar\tau\right)
\\
\nonumber &=v^{1/2}\sum_{\substack{\lambda\in L+r  h \\Q(\lambda)
\equiv \Delta Q(h) \;(\Delta)}} \chi_{\Delta}(\lambda) e\left(
\frac{1}{|\Delta|} \frac{\lambda^2}{2} u +\frac{1}{|\Delta|}\left(
\frac{\lambda_z^2}{2}-\frac{\lambda_{z^\perp}^2}{2}\right)iv\right).
\end{align}
Moreover, we define a $\C[L'/L]$-valued theta function by putting
\begin{align}
\Theta_{\Delta,r }(\tau,z):=\sum_{h\in L'/L} \theta_{\Delta,r
,h}(\tau,z)\frake_h.
\end{align}
We will often omit the dependency on $\Delta,r $ from the notation
if it is clear from the context. In the variable $z$, the function
$\Theta_{\Delta,r }(\tau,z)$ is invariant under $\Gamma_0(N)$. In
the next theorem we consider the transformation behavior in the
variable $\tau$.

\begin{theorem}
\label{twistedtheta} The theta function $\Theta_{\Delta,r }(\tau,z)$
is a non-holomorphic $\C[L'/L]$-valued modular form for $\Mp_2(\Z)$
of weight $1/2$. It transforms with the representation $\rho_L$ if
$\Delta>0$, and with $\bar\rho_L$ if $\Delta<0$.
\end{theorem}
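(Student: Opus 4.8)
The plan is to reduce the transformation law of the twisted theta function $\Theta_{\Delta,r}$ to the untwisted case handled by Proposition~\ref{poisson}. The essential device is to realize the genus character $\chi_\Delta$ as a finite sum of additive characters, so that the twisted sum becomes a linear combination of ordinary Siegel theta functions on a rescaled lattice. Concretely, I would first observe that dividing the quadratic form by $|\Delta|$ in \eqref{defthetah} amounts to passing from $L$ to the rescaled lattice $L(|\Delta|)$ (the lattice $L$ with quadratic form multiplied by $1/|\Delta|$, equivalently considering vectors $\lambda$ lying over a lattice whose discriminant form is built from $L'/L$ and $\Z/\Delta\Z$). The congruence condition $Q(\lambda)\equiv \Delta Q(h)\pmod\Delta$ together with the factor $1/|\Delta|$ is precisely what is needed so that the exponent $e\!\left(\tfrac{1}{|\Delta|}\tfrac{\lambda_z^2}{2}\tau+\cdots\right)$ has the correct shape to be a theta function for this enlarged discriminant form of signature $(2,1)$ scaled by $\Delta$.

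Next I would express $\chi_\Delta(\lambda)$ via Gauss sums. Following \cite{GKZ} and \cite{Sk2}, the genus character can be written as $\chi_\Delta(\lambda)=\sum_{\beta}c_\beta\, e\!\left((\lambda,\beta)/\text{(something)}\right)$ for a suitable finite set of $\beta\in L'\otimes\tfrac1\Delta\Z$, because $\chi_\Delta$ depends only on $b\bmod\Delta$ (i.e.\ on $\lambda$ modulo $\Delta L'$) and is a character-like function in that residue. Substituting this expression converts each $\theta_{\Delta,r,h}$ into a finite linear combination of the general theta functions $\vartheta_L(\tau,z,\alpha,\beta)$ from \eqref{def:vartheta}, with $\alpha$ governed by the additive-character decomposition of $\chi_\Delta$ and $\beta$ governed by the coset data. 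The weight $b^-/2$ prefactor $v^{1/2}$ matches because $b^-=1$.

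I would then apply Proposition~\ref{poisson} termwise to get the behavior under $S$, and a direct computation for $T$ (the $T$-action only multiplies each term by a root of unity coming from $e(Q(\lambda)/|\Delta|)$, which is where the residue constraint $Q(\lambda)\equiv\Delta Q(h)$ and the value $h^2/2$ interact with \eqref{eq:weilt}). The crucial sign issue is that under $S$ the roles of $\alpha$ and $\beta$ swap, and the Gauss sum attached to $\chi_\Delta$ must reassemble into $\chi_\Delta$ again, up to the scalar $\left(\tau/i\right)^{(b^+-b^-)/2}=\left(\tau/i\right)^{1/2}$ and a normalization. The complex conjugation distinguishing $\rho_L$ from $\bar\rho_L$ enters through the sign of $\Delta$: when $\Delta<0$, the factor $1/|\Delta|=-1/\Delta$ flips the sign of the quadratic form in the exponent, effectively conjugating the Weil representation, which is exactly the claimed dichotomy.

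The main obstacle, and the heart of the proof, is verifying that the Gauss-sum decomposition of $\chi_\Delta$ transforms correctly under $S$ so that the genus character is reproduced rather than some other function — i.e.\ checking that the finite sum over $\beta$ closes up under Poisson summation to give back $\chi_\Delta$ times the right automorphy factor, with the $\Delta$-dependent Gauss sum evaluating to $\sqrt{|\Delta|}$ and cancelling the $|L'/L|^{-1/2}$-type normalizations. This is the analogue for general $N$ of the computation carried out in \cite{GKZ} and \cite{Sk2} for weight $3/2$ Jacobi theta kernels, and I expect to follow that pattern closely, keeping careful track of the eighth root of unity $e((b^--b^+)/8)$ in \eqref{eq:weils} and of how $\chi_\Delta$'s multiplicativity \eqref{chiexplicit} interacts with the decomposition $L'/L\cong\Z/2N\Z$.
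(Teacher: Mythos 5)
Your proposal follows essentially the same route as the paper: the authors also write $\theta_{\Delta,r,h}$ as a finite linear combination of untwisted Siegel theta functions $\vartheta_L(|\Delta|\tau,z,0,\alpha/|\Delta|)$ indexed by residues $\alpha$ modulo $\Delta L$ (using that $\chi_\Delta$ depends only on $\lambda$ mod $\Delta L$), apply Proposition~\ref{poisson} termwise for $S$, and then invoke exactly the Gauss-sum identity you flag as the heart of the matter (their Proposition~\ref{gauss-sum}, with $\eps=1$ or $i$ according to $\sgn(\Delta)$ producing the $\rho_L$ versus $\bar\rho_L$ dichotomy), which they in turn reduce to Skoruppa's identity as you anticipate. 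The only caveat is that the invariance should be stated modulo $\Delta L$ rather than $\Delta L'$, but this does not affect the argument.
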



Theorem 4.1 of \cite{Bo1} gives the $\Delta=1$ case. For general
$\Delta$, a similar result for Jacobi forms is contained
in \cite{Sk2} (see \S2, pp.507). The following is crucial for its
proof.

\begin{proposition}
\label{gauss-sum} For $h\in L'/L$ and $\lambda\in L'/\Delta L$, the
exponential sum
\[
G_h(\lambda,\Delta,r )= \sum_{\substack{\lambda' \in L'/\Delta L\\
    \lambda'\equiv  r h \;(L)\\
    Q(\lambda') \equiv \Delta Q(h) \;(\Delta)}}
\chi_{\Delta}(\lambda')
e\left(-\frac{1}{|\Delta|}(\lambda,\lambda')\right)
\]
is equal to
\[
\eps |\Delta|^{3/2} \chi_{\Delta}(\lambda)
\sum_{\substack{h'\in L'/L\\ \lambda\equiv r  h'\;(L)\\
    Q(\lambda) \equiv \Delta Q(h') \;(\Delta)}} e\big(-\sgn(\Delta)(h,h')\big).
\]
Here $\eps=1$ if $\Delta>0$, and $\eps=i$ if $\Delta<0$.
\end{proposition}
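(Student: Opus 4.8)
The plan is to recognize $G_h(\lambda,\Delta,r)$ as a finite quadratic Gauss sum and evaluate it by reducing to the classical Gauss sum attached to the Kronecker symbol. First I would record the data in the coordinates of Section \ref{subsect:1.3}: writing $\lambda=\kzxz{b/2N}{-a/N}{c}{-b/2N}$ and $\lambda'=\kzxz{b'/2N}{-a'/N}{c'}{-b'/2N}$, one computes $(\lambda,\lambda')=\frac{bb'}{2N}-(ac'+a'c)$ and $4NQ(\lambda')=b'^2-4Na'c'$. Representing $h\in L'/L\cong\Z/2N\Z$ by $b_h$, the summation variable then runs over $a',c'\in\Z/\Delta\Z$ and $b'\in\Z/2N\Delta\Z$, subject to the constraint $b'\equiv r\,b_h\pmod{2N}$ (from $\lambda'\equiv rh$) and the constraint $b'^2-4Na'c'\equiv\Delta b_h^2\pmod{4N\Delta}$ (from the condition on $Q(\lambda')$). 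The weight is $\chi_\Delta(\lambda')\,e\!\left(-\frac{1}{|\Delta|}(\lambda,\lambda')\right)$, which makes the oscillatory structure of the sum explicit.

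The heart of the argument is multiplicativity. Using the explicit product formula \eqref{chiexplicit} for the genus character together with the Chinese Remainder Theorem applied to the modulus $\Delta$, both $\chi_\Delta$ and the additive character $e\!\left(-\frac{1}{|\Delta|}(\lambda,\lambda')\right)$ factor over a decomposition $\Delta=\prod_j\Delta_j$ into coprime prime discriminants, so that $G_h$ factors as a product of local sums and it suffices to treat a single prime discriminant. For a prime discriminant the genus character reduces to one Kronecker symbol in the relevant coordinate, and I would separate the summations. After using the $Q$-congruence to eliminate one of $a',c'$, the remaining double sum splits into an additive sum in one variable, which vanishes unless $\lambda$ satisfies exactly the support conditions $\lambda\equiv rh'\pmod{L}$ and $Q(\lambda)\equiv\Delta Q(h')\pmod{\Delta}$ appearing on the right-hand side and otherwise contributes a factor $|\Delta|$, and a genuine quadratic Gauss sum in the other variable. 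Evaluating the latter with the classical identity
\[
\sum_{n\bmod|\Delta|}\leg{\Delta}{n}\, e(n/|\Delta|)=\eps\,|\Delta|^{1/2},\qquad \eps=\begin{cases}1,&\Delta>0,\\ i,&\Delta<0,\end{cases}
\]
in its twisted form $\sum_{n}\leg{\Delta}{n}\,e(nt/|\Delta|)=\leg{\Delta}{t}\,\eps\,|\Delta|^{1/2}$ produces simultaneously the sign $\eps$, a factor $|\Delta|^{1/2}$, and the character value $\chi_\Delta(\lambda)$ in the $\lambda$-variable. Combined with the factor $|\Delta|$ from the collapsing additive sum, this gives the asserted normalization $\eps\,|\Delta|^{3/2}\chi_\Delta(\lambda)$, with the surviving additive character reassembling into the sum $\sum_{h'}e(-\sgn(\Delta)(h,h'))$ over the $h'$ meeting the support conditions; the sign $\sgn(\Delta)$ in the pairing tracks the parity (even/odd) of the Kronecker character, i.e.\ whether $\eps$ is $1$ or $i$.

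I expect the main obstacle to be the careful local analysis at the bad primes rather than the overall shape of the formula. At primes dividing $2N$, and at $2$ when $\Delta$ is even (so the prime discriminant factor is $-4$ or $\pm 8$), the Kronecker symbol and the splitting of the modulus require separate treatment, and one must verify that the primitivity condition $\gcd(a,b,c,\Delta)=1$ built into the definition of $\chi_\Delta$ is compatible with the CRT factorization and is correctly transferred from the $\lambda'$-side to the $\lambda$-side. I would carry out these local computations exactly as in the genus-character manipulations of \cite{GKZ} (Section I.2) and \cite{Sk2} (\S2), where the analogous weight $3/2$ identity is established; here the signature $(2,1)$ and weight $1/2$ only alter the power of $|\Delta|$ and the precise value of $\eps$, both of which are pinned down by the Gauss sum evaluation above. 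Finally I would confirm that when $\lambda$ fails the support conditions the collapsing sum genuinely vanishes, so that both sides are $0$, completing the identification.
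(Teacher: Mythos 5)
Your outline is mathematically sound, but it takes a genuinely different (and much more labor-intensive) route than the paper. The paper's entire proof is two lines: it observes that the desired identity is, up to a finite Fourier transform in the index $r'$ modulo $2N$, exactly identity (3) on page 517 of \cite{Sk2}, where Skoruppa established the analogous Gauss-sum evaluation in the Jacobi-form setting. You instead propose to reprove the underlying identity from scratch: explicit coordinates on $L'/\Delta L$, a Chinese Remainder factorization of $\Delta$ into prime discriminants using the multiplicativity encoded in \eqref{chiexplicit}, elimination of one variable via the $Q$-congruence, and evaluation of the resulting classical twisted Gauss sum $\sum_n\leg{\Delta}{n}e(nt/|\Delta|)=\leg{\Delta}{t}\eps|\Delta|^{1/2}$ --- which is essentially how Skoruppa's identity is itself proved, so your sketch amounts to unpacking the citation. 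Your bookkeeping is consistent: the collapsing additive sum contributes $|\Delta|$ and the support conditions on $h'$, the Gauss sum contributes $\eps|\Delta|^{1/2}\chi_\Delta(\lambda)$, and the product recovers the normalization $\eps|\Delta|^{3/2}$. You also correctly flag the genuine difficulties (prime discriminants $-4$, $\pm 8$, primes dividing $2N$, and the transfer of the primitivity condition $\gcd(a,b,c,\Delta)=1$ through the CRT splitting), which is precisely the content you would be re-deriving from \cite{GKZ} and \cite{Sk2}. The trade-off is clear: your route is self-contained and makes the source of $\eps$ and the power $|\Delta|^{3/2}$ transparent, at the cost of several pages of local case analysis; the paper's route is a one-line reduction to an existing identity, at the cost of opacity. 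Neither contains a gap, but if you carry your version out, the bad-prime cases are where all the real work lives, and you should not expect the single-Kronecker-symbol description of $\chi_{\Delta_j}$ to hold in one fixed coordinate uniformly --- you will need to switch between the $a$- and $c$-coordinates (and the factorizations $N=N_1N_2$) exactly as the explicit formula \eqref{chiexplicit} dictates.
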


\begin{proof} By applying a finite Fourier transform in $r'$ modulo
$2N$, the claim follows from identity (3) on page 517 of \cite{Sk2}.
\end{proof}

\begin{proof}[Proof of Theorem \ref{twistedtheta}]
  We only have to check the transformation behavior under the
  generators $T$ and $S$ of $\tilde\Gamma$.  The transformation law
  under $T$ follows directly from the definition in \eqref{defthetah}.
For the transformation law under $S$ we notice that we may write
\[
\theta_h(\tau,z)= \sum_{\substack{\alpha \in L'/\Delta L\\
    \alpha\equiv  r h \;(L)\\
    Q(\alpha) \equiv \Delta Q(h) \;(\Delta)}}
\chi_{\Delta}(\alpha) |\Delta|^{-1/2} \vartheta_L(|\Delta|\tau,z,0,
\alpha/|\Delta|),
\]
where $\vartheta_L$ is the theta function for the lattice $L$
defined in \eqref{def:vartheta}. Here we have used that
$\chi_{\Delta}(\lambda)$ only depends on $\lambda\in L'$ modulo
$\Delta L$. By Proposition \ref{poisson}, we find that
\begin{align*}
\theta_h(-1/\tau,z) &=\sqrt{\frac{\tau}{i}}
|L'/L|^{-1/2}|\Delta|^{-1}
 \sum_{\substack{\alpha \in L'/\Delta L\\
    \alpha\equiv  r h \;(L)\\
    Q(\alpha) \equiv \Delta Q(h) \;(\Delta)}}
\chi_{\Delta}(\alpha)
\vartheta_{L'}(\tau/|\Delta|,z, \alpha/|\Delta|,0)\\
&=\sqrt{\frac{\tau}{i}} (2N)^{-1/2}|\Delta|^{-3/2}
 v^{1/2}
\sum_{\lambda\in L'} G_h(\lambda,\Delta,r ) e\left(
\frac{1}{|\Delta|}\frac{\lambda_z^2}{2} \tau
  +\frac{1}{|\Delta|} \frac{\lambda_{z^\perp}^2}{2}\bar\tau\right).
\end{align*}
By Proposition \ref{gauss-sum}, we obtain
\[
\theta_h(-1/\tau,z) =\sqrt{\frac{\tau}{i}}\eps (2N)^{-1/2}
\sum_{h'\in L'/L} e\big(\sgn(\Delta)(h,h')\big) \theta_{h'}(\tau,z).
\]
This completes the proof of the theorem.
\end{proof}

\subsection{Partial Poisson summation}

\label{partpoisson}

We now consider the Fourier expansion of $\theta_h(\tau,z)$ in the
variable $z$. Following \cite{Bo1} and \cite{Br}, it is obtained by
applying a partial Poisson summation to the theta kernel.

Recall that the cusps of $\Gamma_0(N)$ correspond to
$\Gamma_0(N)$-classes of primitive isotropic vectors in $L$. Let
$\ell\in L$ be a primitive isotropic vector. Let $\ell' \in L'$ with
$(\ell,\ell')=1$. The $1$-dimensional lattice
\[
K=L\cap\ell'{}^\perp\cap\ell^\perp
\]
is positive definite. For simplicity we assume that $(\ell,L)=\Z$.
In this case we may chose $\ell'\in L$. Then $L$ splits into
\begin{align}
\label{eq:split} L=K\oplus\Z\ell'\oplus\Z\ell,
\end{align}
and $K'/K\cong L'/L$. (If $N$ is squarefree, then any primitive
isotropic vector $\ell \in L$ satisfies $(\ell,L)=\Z$ and our
assumption is not a restriction. For general $N$, the results of
this section still hold with the appropriate modifications, but the
formulas get considerably longer.)

We denote by $w$ the orthogonal complement of $\ell_z$ in $z$. Hence
\[
V(\R)=z\oplus z^\perp = w\oplus\R\ell_z\oplus \R \ell_{z^\perp}.
\]
If $\lambda\in V(\R)$, let $\lambda_w$ be the orthogonal projection
of $\lambda$ to $w$. We denote by $\mu$ the vector
\[
\mu=\mu(z):=-\ell'+\frac{\ell_z}{2\ell_z^2}+\frac{\ell_{z^\perp}}{2\ell_{z^\perp}^2}
\]
in $V(\R)\cap\ell^\perp$. The Grassmannian of $K$ consists of a
single point. Therefore we omit the variable $z$ in the
corresponding theta function $\vartheta_K$ defined in
\eqref{def:vartheta}.

Let $\alpha,\beta\in \Z$, and let $\mu\in K\otimes_\Z\R$. For $h\in
K'/K$ and  $\tau\in \H$,  we let
\begin{align}
\label{defxih} \xi_h(\tau,\mu, \alpha,\beta)&:= \sum_{\lambda\in
K+r h} \sum_{\substack{t\;(\Delta)\\
Q(\lambda-\beta\ell'+t\ell)\equiv \Delta Q(h)\;(\Delta)}}
\chi_{\Delta}(\lambda-\beta\ell'+t\ell) e(-\alpha
t/|\Delta|)\\
\nonumber &\phantom{=}{}\ \ \ \times e\left(
\frac{(\lambda+\beta\mu)^2}{2}\frac{\tau}{|\Delta|}-\frac{1}{|\Delta|}(\lambda+\beta\mu/2,\alpha\mu)\right).
\end{align}
Moreover, we define a $\C[K'/K]$-valued theta function by putting
\begin{align}
\Xi(\tau,\mu, \alpha,\beta):=\sum_{h\in K'/K}\xi_h(\tau,\mu,
\alpha,\beta)\frake_h.
\end{align}
Later we will use the following slightly more explicit formula for
$\Xi(\tau,\mu,n,0)$.

\begin{proposition}
\label{xispecial} If $n$ is an integer, then we have
\[
\xi_h(\tau,\mu, n,0)=   \leg{\Delta}{n}\bar\eps|\Delta|^{1/2}
\sum_{\substack{\lambda\in K+r h\\Q(\lambda)\equiv \Delta
Q(h)\;(\Delta)}} e\left(
\frac{\lambda^2}{2}\frac{\tau}{|\Delta|}-\frac{n}{|\Delta|}(\lambda,\mu)\right).
\]
Here $\leg{\Delta}{0}=1$ if $\Delta=1$, and $\leg{\Delta}{0}=0$
otherwise.
\end{proposition}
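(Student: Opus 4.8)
The plan is to substitute the special values into the definition \eqref{defxih} and to recognize the resulting inner sum as a classical quadratic Gauss sum. Setting $\alpha=n$ and $\beta=0$ in \eqref{defxih}, the genus-character argument becomes $\lambda-\beta\ell'+t\ell=\lambda+t\ell$, the phase $e(-\alpha t/|\Delta|)$ becomes $e(-nt/|\Delta|)$, and the remaining exponential collapses to $e\!\left(\frac{\lambda^2}{2}\frac{\tau}{|\Delta|}-\frac{n}{|\Delta|}(\lambda,\mu)\right)$, which is exactly the exponential in the claimed formula. Because $\lambda\in K+r h$ lies in $\ell^\perp$ and $\ell$ is isotropic, $Q(\lambda+t\ell)=Q(\lambda)$, so the congruence $Q(\lambda+t\ell)\equiv \Delta Q(h)\pmod{\Delta}$ does not involve $t$ and may be moved outside the inner sum. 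Thus $\xi_h(\tau,\mu,n,0)$ becomes a sum over $\lambda\in K+r h$ with $Q(\lambda)\equiv \Delta Q(h)\pmod{\Delta}$, each term carrying the claimed exponential times the inner sum $S(\lambda):=\sum_{t\,(\Delta)}\chi_\Delta(\lambda+t\ell)\,e(-nt/|\Delta|)$.

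It then remains to evaluate $S(\lambda)$ and to show that it equals the constant $\leg{\Delta}{n}\bar\eps|\Delta|^{1/2}$. Here I would use the pairing to track how the associated binary quadratic form $\psi(\lambda+t\ell)$ of \eqref{eq:qf} depends on $t$: its leading coefficient equals $(\lambda+t\ell,\ell')=t$ (since $\lambda\perp\ell'$ and $(\ell,\ell')=1$), while its last coefficient is controlled by $(\lambda+t\ell,\ell)=0$; hence the form is of type $[t,b,0]$ with fixed discriminant $b^2=4NQ(\lambda)$. Applying the explicit formula \eqref{chiexplicit} with the factorization $N=1\cdot N$ (forcing $\Delta_1=\Delta$, $\Delta_2=1$) gives $\chi_\Delta(\lambda+t\ell)=\leg{\Delta}{t}$ when $\gcd(t,\Delta)=1$, and $0$ otherwise; in particular this is independent of $\lambda$. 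Moreover, the condition $Q(\lambda)\equiv\Delta Q(h)\pmod{\Delta}$ is, by the definition of $Q$ on $K'/K$ and of $\chi_\Delta$, precisely equivalent to $\Delta\mid b^2$ together with $b^2/\Delta$ being a square modulo $4N$, i.e.\ exactly the support condition making $\chi_\Delta([t,b,0])$ nonzero; this is why no residual genus character survives in the final formula. Consequently $S(\lambda)=\sum_{t\,(|\Delta|)}\leg{\Delta}{t}\,e(-nt/|\Delta|)$.

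Finally I would evaluate this twisted Gauss sum. As $\leg{\Delta}{\cdot}$ is a primitive real character modulo $|\Delta|$, one has $\sum_t\leg{\Delta}{t}\,e(-nt/|\Delta|)=\leg{\Delta}{-n}\,g$, where $g=\sum_t\leg{\Delta}{t}\,e(t/|\Delta|)=\eps|\Delta|^{1/2}$ is the classical quadratic Gauss sum with the same $\eps$ as in Proposition \ref{gauss-sum}. Writing $\leg{\Delta}{-n}=\leg{\Delta}{-1}\leg{\Delta}{n}$ and checking $\leg{\Delta}{-1}\eps=\bar\eps$ in both cases (for $\Delta>0$, $1\cdot 1=1=\bar\eps$; for $\Delta<0$, $(-1)\cdot i=-i=\bar\eps$) yields $S(\lambda)=\leg{\Delta}{n}\bar\eps|\Delta|^{1/2}$, independent of $\lambda$; the degenerate case $n=0$, where $S$ reduces to $\sum_t\leg{\Delta}{t}$ and vanishes unless $\Delta=1$, matches the stated convention $\leg{\Delta}{0}=1$ iff $\Delta=1$. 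Pulling the constant $S(\lambda)$ out of the $\lambda$-sum gives the proposition. The main obstacle I anticipate is the uniform identification $\chi_\Delta(\lambda+t\ell)=\leg{\Delta}{t}$ for a general primitive isotropic $\ell$ (and in the presence of common factors between $\Delta$ and $N$), together with correctly pinning down the conjugate $\bar\eps$ rather than $\eps$; once these are in hand, the Gauss-sum evaluation is entirely standard.
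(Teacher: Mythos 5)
Your proposal is correct and follows essentially the same route as the paper: substitute $\alpha=n$, $\beta=0$ into \eqref{defxih}, identify $\chi_\Delta(\lambda+t\ell)$ with $\leg{\Delta}{t}$ on the relevant support, and evaluate the resulting quadratic Gauss sum \eqref{gauss}. The only (cosmetic) difference is that the paper justifies the identification $\chi_\Delta(\lambda+t\ell)=\chi_\Delta([t,*,0])=\leg{\Delta}{t}$ by the $\SO^+(L)$-invariance of $\chi_\Delta$ rather than by your direct computation with \eqref{eq:qf} and \eqref{chiexplicit}, and it leaves the bookkeeping of $\leg{\Delta}{-1}\eps=\bar\eps$ implicit where you spell it out.
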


\begin{proof}
By definition we have
\[
\xi_h(\tau,\mu, n,0)= \sum_{\substack{\lambda\in
K+r h\\
Q(\lambda)\equiv \Delta Q(h)\;(\Delta)}} \sum_{t\;(\Delta)}
\chi_{\Delta}(\lambda+t\ell) e(-n t/|\Delta|) e\left(
\frac{\lambda^2}{2}\frac{\tau}{|\Delta|}-\frac{n}{|\Delta|}(\lambda,\mu)\right).
\]
Using the $\SO^+(L)$-invariance of $\chi_{\Delta}$, we find that
$\chi_{\Delta}(\lambda+t\ell)=\chi_{\Delta}([t,*,0])=\leg{\Delta}{t}$
for $\lambda\in  K+r h$ with $Q(\lambda)\equiv \Delta
Q(h)\pmod{\Delta}$. Inserting the value of the Gauss sum
\begin{align}
\label{gauss} \sum_{t\;(\Delta)} \leg{\Delta}{t}e(n t/|\Delta|)
=\leg{\Delta}{n}\eps|\Delta|^{1/2},
\end{align}
we obtain the assertion.
\end{proof}

\begin{theorem}
\label{twistedxi} If $(M,\phi)\in \Mp_2(\Z)$ with $M=\kabcd$, then
we have that
\[
\Xi(M\tau,\mu,a\alpha+b\beta,c\alpha+d\beta)=\phi(\tau)\tilde\rho_K(M,\phi)
\cdot \Xi(\tau,\mu,\alpha,\beta).
\]
Here $\tilde\rho_K$ is the representation $\rho_K$ when $\Delta>0$,
and the representation $\bar\rho_K$ when $\Delta<0$.
\end{theorem}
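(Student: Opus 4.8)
Since $\tilde\Gamma=\Mp_2(\Z)$ is generated by $T$ and $S$, and since both the automorphy factor $(M,\phi)\mapsto\phi(\tau)\tilde\rho_K(M,\phi)$ and the linear $\Sl_2(\Z)$-action $(\alpha,\beta)\mapsto(a\alpha+b\beta,c\alpha+d\beta)$ on the characteristics are multiplicative in $(M,\phi)$, the plan is to verify the transformation law only on these two generators, running the argument in close parallel with the proof of Theorem \ref{twistedtheta} but with the rank one lattice $K$ in the role played there by $L$. For $T=\left(\kzxz{1}{1}{0}{1},1\right)$ the characteristics transform as $(\alpha,\beta)\mapsto(\alpha+\beta,\beta)$, and I would read the identity straight off the definition \eqref{defxih}: replacing $\tau$ by $\tau+1$ multiplies the Gaussian by $e\!\left((\lambda+\beta\mu)^2/(2|\Delta|)\right)$, while the shift $\alpha\mapsto\alpha+\beta$ alters the phases $e(-\alpha t/|\Delta|)$ and $e(-(\lambda+\beta\mu/2,\alpha\mu)/|\Delta|)$. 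The characteristic-shift is calibrated precisely so that, after reindexing the sum over $t\pmod\Delta$, all $\lambda$-, $t$- and $\mu$-dependent contributions cancel. What survives is the scalar forced by the summation condition $Q(\lambda-\beta\ell'+t\ell)\equiv\Delta Q(h)\pmod\Delta$ together with the normalization $\tau/|\Delta|$, namely $e(\sgn(\Delta)Q(h))$, which is exactly $\rho_K(T)$ when $\Delta>0$ and $\bar\rho_K(T)$ when $\Delta<0$ (compare \eqref{eq:weilt}). This step is pure bookkeeping.

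The real content lies in the behavior under $S=\left(\kzxz{0}{-1}{1}{0},\sqrt\tau\right)$, where $(\alpha,\beta)\mapsto(-\beta,\alpha)$ and one must establish
\[
\Xi(-1/\tau,\mu,-\beta,\alpha)=\sqrt\tau\,\tilde\rho_K(S)\cdot\Xi(\tau,\mu,\alpha,\beta).
\]
I would mimic the proof of Theorem \ref{twistedtheta}. First I would rewrite each component $\xi_h$ through the genuine positive definite rank one theta function $\vartheta_K$ of \eqref{def:vartheta}, with $\alpha,\beta$ encoded as theta characteristics and $\mu$ as the shift vector, using the mod-$\Delta$ periodicity of $\chi_\Delta$. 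Applying Proposition \ref{poisson} to $\vartheta_K$ then produces the weight factor $\sqrt{\tau/i}$ and interchanges the two characteristics, the inner shift by $\beta\mu$ trading roles with the outer linear phase in $\alpha\mu$; this is the analytic mechanism realizing $(\alpha,\beta)\mapsto(-\beta,\alpha)$.

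The twisting character is the one genuinely delicate point, and it is absorbed by the Gauss-sum evaluation analogous to the one driving Theorem \ref{twistedtheta}. After Poisson summation the summand carries a residue sum of the shape $\sum_{\lambda'\,(\Delta)}\chi_\Delta(\lambda')\,e(-(\lambda,\lambda')/|\Delta|)$, and the analogue of Proposition \ref{gauss-sum} simultaneously restores the character $\chi_\Delta(\lambda)$ on the transformed lattice, supplies the constant $\eps|\Delta|^{3/2}$, and converts the sum over $h$ into the kernel $e(-\sgn(\Delta)(h,h'))$. Comparing this kernel with \eqref{eq:weils} identifies it with $\rho_K(S)$ for $\Delta>0$ and with $\bar\rho_K(S)$ for $\Delta<0$, i.e.\ with $\tilde\rho_K(S)$; the surplus powers of $|\Delta|$ cancel the normalizing factors and the factor $i$ inside $\sqrt{\tau/i}$ combines with $\eps$ to yield the clean weight $1/2$ factor $\sqrt\tau$. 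I expect this $S$-computation to be the main obstacle: the genuine work is in shepherding the $\mu$-dependent linear terms correctly through the Poisson summation and tracking the accumulated constants, whereas the essential arithmetic input, the behavior of the genus character under the Gauss sum, is already packaged in Proposition \ref{gauss-sum}.
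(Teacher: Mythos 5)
Your proposal follows the paper's proof essentially verbatim: reduce to the generators $T$ and $S$, read the $T$-law off the definition \eqref{defxih} (your identification of the surviving scalar $e(\sgn(\Delta)Q(h))$ via the condition $Q(\lambda-\beta\ell'+t\ell)\equiv\Delta Q(h)\pmod{\Delta}$ is exactly right), and handle $S$ by expressing $\xi_h$ through $\vartheta_K$ with shifted characteristics, applying Proposition \ref{poisson}, and evaluating the resulting twisted exponential sum by the $K$-adapted Gauss sum — which is precisely Proposition \ref{gauss-sum2}, the "analogue of Proposition \ref{gauss-sum}" you invoke (note its constant is $\eps|\Delta|^{1/2}$, not $\eps|\Delta|^{3/2}$, but as you say the normalizations absorb this). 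The argument is correct and takes the same route as the paper.
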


The proof is based on the following proposition.

\begin{proposition}
\label{gauss-sum2} Let $h\in K'/K$. For $\kappa\in K'/\Delta K$,
$a\in \Z/\Delta\Z$, and $s\in \Z/\Delta\Z$, the exponential sum
\[
g_h(\kappa,a,s)= \!\!\!\sum_{\substack{\kappa' \in K'/\Delta K\\
    \kappa'\equiv  r h \;(L)\\ b'\;(\Delta)\\
    Q(\kappa'+s \ell'+b'\ell) \equiv \Delta Q(h) \;(\Delta)}}
\!\!\!\chi_{\Delta}(\kappa'+s \ell'+b'\ell)
e\left(-\frac{1}{|\Delta|}((\kappa,\kappa') +ab')\right)
\]
is equal to
\[
\eps |\Delta|^{1/2} \sum_{\substack{h'\in K'/K\\ \kappa\equiv r
h'\;(K)}
}e\big(-\sgn(\Delta)(h,h')\big)\!\!\!\sum_{\substack{b\;(\Delta)\\
    Q(\kappa+a\ell'+b\ell) \equiv \Delta Q(h') \;(\Delta)}}
\!\!\!\chi_{\Delta}(\kappa+a\ell'+b\ell) e(bs/|\Delta|).
\]
Here $\eps=1$ if $\Delta>0$, and $\eps=i$ if $\Delta<0$.
\end{proposition}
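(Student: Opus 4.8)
The plan is to prove Proposition \ref{gauss-sum2} by reducing it to the analogous statement in \cite{Sk2}, just as Proposition \ref{gauss-sum} was proved, but now keeping track of the finer splitting $L = K \oplus \Z\ell' \oplus \Z\ell$ from \eqref{eq:split}. The key structural observation is that the sum $g_h(\kappa,a,s)$ is a twisted Gauss sum over the dual lattice $L'/\Delta L$, which factors according to the orthogonal decomposition: the $K'$-component contributes the inner twisted theta-type sum, while the isotropic directions $\Z\ell'$ and $\Z\ell$ contribute ordinary one-dimensional Gauss sums in the variables $s$ and $b'$. The appearance of $\eps|\Delta|^{1/2}$ (rather than the $\eps|\Delta|^{3/2}$ of Proposition \ref{gauss-sum}) reflects that we have split off the two hyperbolic coordinates, each contributing a factor $|\Delta|^{1/2}$ but with one of them cancelled against the finer normalization.

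First I would write $\lambda' = \kappa' + s'\ell' + b'\ell \in L'/\Delta L$ in coordinates adapted to \eqref{eq:split}, so that the congruence $\lambda' \equiv rh \pmod L$ fixes $\kappa' \equiv rh \pmod K$ and forces $s' \equiv 0$, while $b'$ ranges freely modulo $\Delta$. The genus character $\chi_\Delta(\kappa' + s\ell' + b'\ell)$ depends only on the class modulo $\Delta L$, which is exactly the hypothesis under which the explicit formula \eqref{chiexplicit} and the $\SO^+(L)$-invariance apply. I would then invoke the master computation in \S2 of \cite{Sk2} (identity (3) on p.~517), which evaluates precisely this kind of twisted exponential sum over $L'/\Delta L$, and perform a finite Fourier transform in the variable $r'$ modulo $2N$ to pass from the Jacobi-form normalization of \cite{Sk2} to the vector-valued normalization used here. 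The effect of this Fourier transform is to replace the single Gauss-sum datum by the sum over $h' \in K'/K$ with $\kappa \equiv rh' \pmod K$, producing the phase $e(-\sgn(\Delta)(h,h'))$, while the remaining one-dimensional Gauss sum in the isotropic direction yields the factor $\eps|\Delta|^{1/2}$ and transposes the roles of $a$ and $s$, which is the origin of the swapped variables in the two sides of the identity.

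The main obstacle I anticipate is bookkeeping rather than conceptual: one must carefully match the support conditions $Q(\kappa' + s\ell' + b'\ell) \equiv \Delta Q(h)\pmod\Delta$ on the left with the transposed condition $Q(\kappa + a\ell' + b\ell) \equiv \Delta Q(h')\pmod\Delta$ on the right, and verify that the Poisson/Fourier duality interchanges the $b'$-summation (weighted by $e(-ab'/|\Delta|)$) on the left with the $b$-summation (weighted by $e(bs/|\Delta|)$) on the right in a manner consistent with the genus character. Since $\chi_\Delta$ is multiplicative across the factorization of $\Delta$ into discriminants and only sees the class of $\lambda$ modulo $\Delta L$, the character passes cleanly through the Fourier transform, and the quadratic-form congruences are preserved because $Q$ is additive with respect to \eqref{eq:split} on the isotropic part. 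Once these conditions are aligned, the identity follows directly from \cite{Sk2} as indicated, so I would present this as a short proof citing the reference, in parallel with the proof of Proposition \ref{gauss-sum}.
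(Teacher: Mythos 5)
Your overall strategy---exploit the splitting \eqref{eq:split} and a partial finite Fourier transform in the hyperbolic coordinates to reduce to a Skoruppa-type Gauss sum---is the right mechanism, but your proposal contains a concrete error that would prevent it from closing. You claim that the congruence $\lambda'\equiv rh\pmod L$ ``forces $s'\equiv 0$.'' It does not: since $rh\in K'$ and $L'=K'\oplus\Z\ell'\oplus\Z\ell$ under the standing assumption $(\ell,L)=\Z$, the condition $\lambda'\equiv rh\pmod{L}$ only pins down $\kappa'\equiv rh\pmod K$, while the coefficients of $\ell'$ and $\ell$ are merely required to be integral and hence range freely over $\Z/\Delta\Z$ modulo $\Delta L$. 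In $g_h(\kappa,a,s)$ the $\ell'$-coefficient is fixed at $s$ \emph{by definition}, not by the congruence; if you force it to $0$ you only treat the $s=0$ case, whereas the right-hand side of the proposition visibly depends on $s$ through $e(bs/|\Delta|)$. Relatedly, the hyperbolic plane does not split into two orthogonal one-dimensional Gauss sums: $Q(s\ell'+b'\ell)=sb'$ couples the two isotropic coordinates, both in the pairing and in the congruence condition on $Q$.

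The clean route (and the paper's) is not to return to \cite{Sk2} but to apply Proposition \ref{gauss-sum}, already established, to $\lambda=\kappa+a\ell'+b\ell$. Since $K\perp\ell,\ell'$, both $\ell,\ell'$ are isotropic and $(\ell,\ell')=1$, one has $(\kappa+a\ell'+b\ell,\;\kappa'+s'\ell'+b'\ell)=(\kappa,\kappa')+ab'+bs'$, so grouping the sum defining $G_h(\kappa+a\ell'+b\ell,\Delta,r)$ by the $\ell'$-coefficient $s'$ of $\lambda'$ gives $G_h(\kappa+a\ell'+b\ell,\Delta,r)=\sum_{s'(\Delta)}e(-bs'/|\Delta|)\,g_h(\kappa,a,s')$. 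Finite Fourier inversion in $b$ modulo $\Delta$ then expresses $g_h(\kappa,a,s)$ as $|\Delta|^{-1}\sum_{b(\Delta)}e(bs/|\Delta|)\,G_h(\kappa+a\ell'+b\ell,\Delta,r)$; substituting the evaluation from Proposition \ref{gauss-sum} and noting that $\kappa+a\ell'+b\ell\equiv rh'\pmod L$ if and only if $\kappa\equiv rh'\pmod K$ turns $\eps|\Delta|^{3/2}$ into $\eps|\Delta|^{1/2}$ and produces exactly the stated right-hand side, including the interchange of the roles of $a$ and $s$ that your write-up attributes only vaguely to ``a one-dimensional Gauss sum in the isotropic direction.''
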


\begin{proof}
This follows from Proposition \ref{gauss-sum} for
$\lambda=\kappa+a\ell'+b\ell$, by applying a finite Fourier
transform in $b$ modulo $\Delta$.
\end{proof}

\begin{proof}[Proof of Theorem \ref{twistedxi}]
We only have to check the transformation behavior under the
generators $T$ and $S$ of $\tilde\Gamma$.  The transformation law
under $T$ follows directly from \eqref{defxih}. For the
transformation law under $S$ we notice that we may write
\begin{align*}
\xi_h(\tau,\mu,\alpha,\beta)&=
\sum_{\substack{\lambda_1 \in K'/\Delta K\\
    \lambda_1 \equiv  r h \;(K)\\
    t\;(\Delta)\\
    Q(\lambda_1-\beta \ell' +t\ell) \equiv \Delta Q(h) \;(\Delta)}}
\chi_{\Delta}(\lambda_1-\beta\ell'+t\ell) e\left(-\frac{\alpha t
}{|\Delta|}-\frac{(\lambda_1,\alpha\mu)}{2|\Delta|}\right)\\
&\phantom{=}{}\times\vartheta_K\left(|\Delta|\tau,\alpha\mu,
\frac{\lambda_1}{|\Delta|}+\frac{\beta\mu}{|\Delta|}\right),
\end{align*}
where $\vartheta_K$ is the theta function for the lattice $K$ in
\eqref{def:vartheta}. By Proposition \ref{poisson}, we have
\begin{align*}
\xi_h(-1/\tau,\mu,-\beta,\alpha)&=\sqrt{\frac{\tau}{i}}|K'/K|^{-1/2}|\Delta|^{-1/2}
\sum_{\lambda\in K'} g_h(\lambda,-\beta,-\alpha)\\
&\phantom{=}{}\times e\left(
\frac{(\lambda+\beta\mu)^2}{2}\frac{\tau}{|\Delta|}-\frac{1}{|\Delta|}(\lambda+\beta\mu/2,\alpha\mu)\right)
.
\end{align*}
By Proposition \ref{gauss-sum2}, we find that
\begin{align*}
\xi_h(-1/\tau,\mu,-\beta,\alpha)&=\sqrt{\frac{\tau}{i}}\eps|K'/K|^{-1/2}
\sum_{h'\in K'/K}
e\big(\sgn(\Delta)(h,h')\big)\xi_h(\tau,\mu,\alpha,\beta).
\end{align*}
This concludes the proof of the theorem.
\end{proof}

\begin{lemma}
\label{thetafourier1} We have that
\begin{align*}
\theta_h(\tau,z)
&=\frac{1}{\sqrt{2|\Delta|\ell_z^2}}\sum_{\lambda\in r
h+L/\Z\ell}\sum_{d\in \Z}
\sum_{\substack{t\;(\Delta)\\Q(\lambda+t\ell)\equiv\Delta
Q(h)\;(\Delta)}}
\chi_{\Delta}(\lambda+t\ell) e(-dt/|\Delta|)\\
&\phantom{=}{}\times e\left(
\frac{\lambda_w^2}{2}\frac{\tau}{|\Delta|}-\frac{d}{|\Delta|}\frac{(\lambda,\ell_z-\ell_{z^\perp})}{2\ell_z^2}-\frac{|d+(\lambda,\ell)\tau|^2}{4i|\Delta|v\ell_z^2}\right).
\end{align*}
\end{lemma}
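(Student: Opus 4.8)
The plan is to follow the partial Poisson summation argument of Borcherds (see Theorem~5.2 of \cite{Bo1}) and Bruinier \cite{Br}, modified to accommodate the genus character $\chi_{\Delta}$ and the congruence condition modulo $\Delta$. First I would use the splitting \eqref{eq:split}, $L=K\oplus\Z\ell'\oplus\Z\ell$, to write every $\lambda\in L+rh$ uniquely as $\lambda=\lambda_1+c\ell$ with $c\in\Z$ and $\lambda_1$ running through representatives of $(L+rh)/\Z\ell$. Since $(\ell,\ell)=0$ and $(\lambda,\ell)$ depends only on the $\ell'$-coordinate of $\lambda$, both the pairing $(\lambda,\ell)$ and the projection $\lambda_w$ are independent of $c$; moreover $\ell_w=0$ and $\ell_{z^\perp}^2=-\ell_z^2$ because $\ell$ is isotropic. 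Writing $\lambda_z^2=\lambda_w^2+(\lambda,\ell_z)^2/\ell_z^2$ and $\lambda_{z^\perp}^2=(\lambda,\ell_{z^\perp})^2/\ell_{z^\perp}^2$ and using $(\lambda+c\ell,\ell_z)=(\lambda_1,\ell_z)+c\ell_z^2$ and $(\lambda+c\ell,\ell_{z^\perp})=(\lambda_1,\ell_{z^\perp})+c\ell_{z^\perp}^2$, these identities let me express the quadratic exponent in \eqref{defthetah} as a genuine Gaussian in the single variable $c$.

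The essential new point is that $\chi_{\Delta}(\lambda)$ depends only on $\lambda$ modulo $\Delta L$, hence $\chi_{\Delta}(\lambda_1+c\ell)$ is periodic in $c$ with period $\Delta$; likewise the congruence $Q(\lambda)\equiv\Delta Q(h)\pmod{\Delta}$ depends only on $c\bmod\Delta$. I would therefore break the sum over $c\in\Z$ into residue classes $c\equiv t\pmod{\Delta}$, writing $c=t+\Delta s$ with $t$ modulo $\Delta$ and $s\in\Z$. On each progression the character factor is the constant $\chi_{\Delta}(\lambda_1+t\ell)$, and the congruence becomes $Q(\lambda_1+t\ell)\equiv\Delta Q(h)\pmod{\Delta}$, which is exactly the inner $t$-sum appearing in the statement.

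With the character and congruence pulled out, the remaining sum over $s\in\Z$ is a one-dimensional Gaussian theta series, to which I would apply Poisson summation as in Proposition~\ref{poisson}. This produces the dual summation variable $d\in\Z$; the shift of $c$ by $t$ together with the rescaling by $1/|\Delta|$ contributes the phase $e(-dt/|\Delta|)$, while the Gaussian normalization, combined with the prefactor $v^{1/2}$ and the width factor $\ell_z^2$, yields the constant $\tfrac{1}{\sqrt{2|\Delta|\ell_z^2}}$. Completing the square in $d$ and re-expressing the linear and quadratic contributions via $(\lambda,\ell_z-\ell_{z^\perp})$ and $(\lambda,\ell)$ then gives the three terms in the final exponential: the $c$-independent piece $\tfrac{\lambda_w^2}{2}\tfrac{\tau}{|\Delta|}$, the linear term $-\tfrac{d}{|\Delta|}\tfrac{(\lambda,\ell_z-\ell_{z^\perp})}{2\ell_z^2}$, and the Gaussian term $-\tfrac{|d+(\lambda,\ell)\tau|^2}{4i|\Delta|v\ell_z^2}$ in the dual variable.

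I expect the main obstacle to be the bookkeeping in the Poisson step: one must track the interplay between the $\Delta$-rescaling of $\tau$, the arithmetic-progression decomposition of $c$, and the isotropy relations $\ell_w=0$ and $\ell_{z^\perp}^2=-\ell_z^2$ with enough care to land exactly on the phase $e(-dt/|\Delta|)$ and the normalizing constant $\tfrac{1}{\sqrt{2|\Delta|\ell_z^2}}$. The analogous untwisted identity is standard, so the genuinely new content is verifying that the character and congruence factors survive Poisson summation in the clean form stated; this is precisely what their $\Delta$-periodicity in the isotropic direction guarantees.
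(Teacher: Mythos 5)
Your proposal is correct and follows essentially the same route as the paper: a partial Poisson summation in the isotropic direction over the sublattice $\Z\Delta\ell$, made possible by the fact that $\chi_\Delta$ and the congruence condition depend only on $\lambda$ modulo $\Delta L$. The only difference is organizational — the paper first reduces modulo $\Z\Delta\ell$, Poisson sums, and then splits the quotient into classes $\lambda'+t\ell$ with $t$ modulo $\Delta$, whereas you split $c=t+\Delta s$ before the Poisson step — but this yields the identical computation, with the phase $e(-dt/|\Delta|)$ arising from the shift by $t$ in both versions.
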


\begin{proof}
The proof follows the argument of Lemma 5.1 in \cite{Bo1} (see also
Lemma 2.3 in \cite{Br}). In the definition of $\theta_h(\tau,z)$, we
rewrite the sum over $\lambda\in rh+L$ as a sum over
$\lambda'+d|\Delta|\ell$, where $\lambda'$ runs through
$rh+L/\Z\Delta\ell$ and $d$ runs through $\Z$. We obtain
\[
\theta_{h}(\tau,z)=v^{1/2}\sum_{\substack{\lambda\in r  h+
L/\Z\Delta\ell\\ Q(\lambda) \equiv \Delta Q(h) \;(\Delta)}}
\chi_{\Delta}(\lambda)\sum_{d\in \Z}
g(|\Delta|\tau,z,\lambda/|\Delta|;d),
\]
where the function $g(\tau,z,\lambda;d)$ is defined by
\[
g(\tau,z,\lambda;d)= e\left( \frac{(\lambda+d\ell)_z^2}{2} \tau
+\frac{(\lambda+d\ell)_{z^\perp}^2}{2}\bar\tau\right)
\]
for $\tau\in\H$, $z\in \Gr(V)$, $\lambda\in V(\R)$, and $d\in \R$.
We apply Poisson summation to the sum over $d$. The Fourier
transform of $g$ as a function in $d$ is
\[
\hat g(\tau,z,\lambda,d)=\frac{1}{\sqrt{2v\ell_z^2}} e\left(
\frac{\lambda_w^2}{2}\tau
-\frac{d(\lambda,\ell_z-\ell_{z^\perp})}{2\ell_z^2}-\frac{|d+(\lambda,\ell)\tau|^2}{4iv\ell_z^2}\right)
\]
(see \cite{Br}, p.~43). We obtain
\begin{align*}
\theta_{h}(\tau,z)&=v^{1/2}\sum_{\substack{\lambda\in r  h+
L/\Z\Delta\ell\\ Q(\lambda) \equiv \Delta Q(h) \;(\Delta)}}
\chi_{\Delta}(\lambda)\sum_{d\in \Z} \hat
g(|\Delta|\tau,z,\lambda/|\Delta|;d)\\
&=\frac{1}{\sqrt{2|\Delta|\ell_z^2}}\sum_{\substack{\lambda\in r  h+ L/\Z\Delta\ell\\
Q(\lambda) \equiv \Delta Q(h) \;(\Delta)}}
\chi_{\Delta}(\lambda)\sum_{d\in \Z} e\left(
\frac{\lambda_w^2}{2}\frac{\tau}{|\Delta|}
-\frac{d(\lambda,\ell_z-\ell_{z^\perp})}{2|\Delta|\ell_z^2}-\frac{|d+(\lambda,\ell)\tau|^2}{4iv|\Delta|\ell_z^2}\right).
\end{align*}
The claim follows by rewriting the sum over $\lambda\in
rh+L/\Z\Delta\ell$ as a sum over $\lambda'+t\ell$, where $\lambda'$
runs through $rh+L/\Z\ell$ and $t$ runs through $\Z/\Delta \Z$, and
by using the facts that $\ell_w=0$ and
$(\ell,\ell_z-\ell_{z^\perp})/2\ell_z^2=1$.
\end{proof}

\begin{lemma}
\label{thetafourier2} We have that
\begin{align*}
\theta_h(\tau,z) &=\frac{1}{\sqrt{2|\Delta|\ell_z^2}}\sum_{c,d\in
\Z} \exp\left(-\frac{\pi
|c\tau+d|^2}{2|\Delta|v\ell_z^2}\right)\xi_h(\tau,\mu,d,-c).
\end{align*}
\end{lemma}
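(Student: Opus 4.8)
The plan is to obtain the formula directly from Lemma~\ref{thetafourier1} by inserting the orthogonal splitting $L=K\oplus\Z\ell'\oplus\Z\ell$ from \eqref{eq:split} and reorganizing the resulting sum. First I would write each $\lambda\in rh+L/\Z\ell$ uniquely as $\lambda=\lambda_K+c\ell'$ with $\lambda_K\in rh+K$ and $c\in\Z$; here the coset shift $rh$ is absorbed into the $K$-component via the identification $K'/K\cong L'/L$, and the projection $\lambda_w$ is well defined on $L/\Z\ell$ because $\ell_w=0$. Under this substitution the inner sum over $d\in\Z$ in Lemma~\ref{thetafourier1}, together with the new variable $c$, supplies the pair $(c,d)$ in the statement, while the remaining sum over $\lambda_K$ and over $t\pmod\Delta$ should reassemble into $\xi_h(\tau,\mu,d,-c)$ as defined in \eqref{defxih} (with $\alpha=d$ and $\beta=-c$).

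The first concrete step is to extract the Gaussian factor. Since $K\perp\ell$, $\ell^2=0$, and $(\ell,\ell')=1$, one has $(\lambda,\ell)=(\lambda_K+c\ell',\ell)=c$. Substituting this into the last exponential in Lemma~\ref{thetafourier1} and using $e(x)=e^{2\pi ix}$, the factor $e\bigl(-|d+(\lambda,\ell)\tau|^2/(4i|\Delta|v\ell_z^2)\bigr)$ becomes exactly $\exp\bigl(-\pi|c\tau+d|^2/(2|\Delta|v\ell_z^2)\bigr)$. This is independent of $\lambda_K$ and $t$, so it pulls out of the inner sums, and the prefactor $1/\sqrt{2|\Delta|\ell_z^2}$ is untouched.

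It remains to match what survives with the summand of $\xi_h(\tau,\mu,d,-c)$. The genus character $\chi_\Delta(\lambda_K+c\ell'+t\ell)$, the congruence $Q(\lambda_K+c\ell'+t\ell)\equiv\Delta Q(h)\pmod\Delta$, and the factor $e(-dt/|\Delta|)$ all agree term-for-term, so the real content is the pair of exponent identities $\lambda_w^2=(\lambda_K-c\mu)^2$ (the coefficient of $\tau/|\Delta|$) and $(\lambda,\ell_z-\ell_{z^\perp})/(2\ell_z^2)=(\lambda_K-c\mu/2,\mu)$ (the coefficient of $-d/|\Delta|$). I expect this verification to be the main obstacle. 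For the first identity I would use the explicit form $\mu=-\ell'+(\ell_z-\ell_{z^\perp})/(2\ell_z^2)$ and $\ell_{z^\perp}^2=-\ell_z^2$ to show that $(\lambda_K-c\mu)-\lambda_w$ is a scalar multiple of $\ell$; this is exactly where the relation $(\lambda,\ell)=c$ is needed, and since $\ell\perp w$ and $\ell^2=0$, adding a multiple of $\ell$ leaves the norm unchanged, so $(\lambda_K-c\mu)^2=\lambda_w^2$. The second identity is the analogous computation for the linear term, using $(\mu,\ell)=0$ together with $\lambda_w\perp\ell_z$ and $\lambda_w\perp\ell_{z^\perp}$.

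Once these two identities are established, the summand of Lemma~\ref{thetafourier1} (after the Gaussian is factored out) coincides termwise with the summand of $\xi_h(\tau,\mu,d,-c)$, and summing over $c,d\in\Z$ gives the claimed expression. Everything besides the projection computation in the preceding paragraph is a bookkeeping reindexing of the sum.
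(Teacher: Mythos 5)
Your proposal is correct and follows essentially the same route as the paper: both proofs insert the splitting $L=K\oplus\Z\ell'\oplus\Z\ell$ into Lemma~\ref{thetafourier1}, write $\lambda=\lambda_K+c\ell'$ so that $(\lambda,\ell)=c$ produces the Gaussian factor $\exp\bigl(-\pi|c\tau+d|^2/(2|\Delta|v\ell_z^2)\bigr)$, and then verify the exponent identities needed to recognize the remaining sum as $\xi_h(\tau,\mu,d,-c)$ from \eqref{defxih}. The only difference is cosmetic: the paper records the three identities $\ell'_w=-\mu_w$, $-\mu^2/2=(\ell',\ell_z-\ell_{z^\perp})/2\ell_z^2$ and $(\lambda,\mu)=(\lambda,\ell_z-\ell_{z^\perp})/2\ell_z^2$ without proof, whereas you derive the equivalent facts by showing $(\lambda_K-c\mu)-\lambda_w\in\R\ell$, which is the same linear algebra.
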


\begin{proof}
Using $rh+L/\Z\ell=rh+K+\Z\ell'$ and the identities
\begin{align*}
\ell'_w&=-\mu_w,\\
-\frac{\mu^2}{2}&=\frac{(\ell',\ell_z-\ell_{z^\perp})}{2\ell_z^2},\\
(\lambda,\mu)&=\frac{(\lambda,\ell_z-\ell_{z^\perp})}{2\ell_z^2}
\end{align*}
for $\lambda\in K\otimes\R$, the formula of Lemma
\ref{thetafourier1} can rewritten as
\begin{align*}
\theta_h(\tau,z)
&=\frac{1}{\sqrt{2|\Delta|\ell_z^2}}\sum_{\lambda\in r
h+K}\sum_{c,d\in \Z}
\sum_{\substack{t\;(\Delta)\\Q(\lambda+c\ell'+t\ell)\equiv\Delta
Q(h)\;(\Delta)}}
\chi_{\Delta}(\lambda+c\ell'+t\ell) e(-dt/|\Delta|)\\
&\phantom{=}{}\times e\left(
\frac{(\lambda-c\mu)_w^2}{2}\frac{\tau}{|\Delta|}-\frac{1}{|\Delta|}
(\lambda-c\mu/2,d\mu)-\frac{|c\tau+d|^2}{4i|\Delta|v\ell_z^2}\right).
\end{align*}
Inserting the definition \eqref{defxih} of $\xi_h(\tau,\mu,
\alpha,\beta)$, we obtain the assertion.
\end{proof}

\begin{theorem}
\label{thetafourier3} We have that
\begin{align*}
\Theta_{\Delta,r }(\tau,z)
&=\frac{1}{\sqrt{2|\Delta|\ell_z^2}}\Xi(\tau,0,0,0)\\
&\phantom{=}{}+\frac{1}{\sqrt{2|\Delta|\ell_z^2}} \sum_{n\geq
1}\sum_{\gamma\in \tilde\Gamma_\infty\bs\tilde\Gamma}
\left[\exp\left(-\frac{\pi n^2}{2|\Delta|\Im(
\tau)\ell_z^2}\right)\Xi(\tau,\mu,n,0)\right]\mid_{1/2,\tilde\rho_K}\gamma.
\end{align*}
\end{theorem}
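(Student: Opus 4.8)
The plan is to combine the two preceding lemmas with an unfolding of the sum over lattice vectors into a sum over cosets of $\tilde\Gamma_\infty$ in $\tilde\Gamma$. By Lemma \ref{thetafourier2}, each component $\theta_h(\tau,z)$ is already written as
\[
\theta_h(\tau,z) =\frac{1}{\sqrt{2|\Delta|\ell_z^2}}\sum_{c,d\in \Z} \exp\left(-\frac{\pi |c\tau+d|^2}{2|\Delta|v\ell_z^2}\right)\xi_h(\tau,\mu,d,-c),
\]
so summing over $h$ gives $\Theta_{\Delta,r}(\tau,z)$ as a single sum over $(c,d)\in\Z^2$ of the Gaussian factor times $\Xi(\tau,\mu,d,-c)$. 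First I would isolate the term $(c,d)=(0,0)$, which contributes $\frac{1}{\sqrt{2|\Delta|\ell_z^2}}\Xi(\tau,0,0,0)$, matching the first summand of the claimed formula (here the Gaussian factor is $1$ and $\mu$ drops out since $\alpha=\beta=0$).

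For the remaining terms, I would group the pairs $(c,d)\neq(0,0)$ by their greatest common divisor: write $(c,d)=n(c_0,d_0)$ with $n\geq 1$ and $\gcd(c_0,d_0)=1$. The key observation is that primitive pairs $(c_0,d_0)$ are in bijection with the bottom rows of coset representatives for $\tilde\Gamma_\infty\bs\tilde\Gamma$, since $T$-multiplication on the left shifts the top row while fixing $(c,d)$, and every coprime bottom row occurs for a unique coset. The heart of the argument is then to recognize that applying the slash operator $\mid_{1/2,\tilde\rho_K}\gamma$ to the seed function $\exp\left(-\frac{\pi n^2}{2|\Delta|\Im(\tau)\ell_z^2}\right)\Xi(\tau,\mu,n,0)$ reproduces exactly the Gaussian factor and the $\Xi(\tau,\mu,d,-c)$ appearing for the pair $(c,d)=n(c_0,d_0)$. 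This is where Theorem \ref{twistedxi} enters: the transformation law $\Xi(M\tau,\mu,a\alpha+b\beta,c\alpha+d\beta)=\phi(\tau)\tilde\rho_K(M,\phi)\Xi(\tau,\mu,\alpha,\beta)$ with $(\alpha,\beta)=(n,0)$ produces $\Xi(M\tau,\mu,an,cn)$, and one checks that for $\gamma=(M,\phi)$ with bottom row $(c_0,d_0)$ the relevant indices line up with $(d,-c)=(nd_0,-nc_0)$ after accounting for the action of $\tilde\Gamma_\infty$.

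Concretely, I would verify that under $\gamma$ the argument $\Im(M\tau)=v/|c_0\tau+d_0|^2$ turns the seed Gaussian into the factor $\exp\left(-\frac{\pi n^2|c_0\tau+d_0|^2}{2|\Delta|v\ell_z^2}\right)=\exp\left(-\frac{\pi|c\tau+d|^2}{2|\Delta|v\ell_z^2}\right)$, while the automorphy factor $\phi(\tau)^{-1}$ from the weight $1/2$ slash cancels against the $\phi(\tau)$ produced by Theorem \ref{twistedxi}, and the $\tilde\rho_K$-factor is absorbed by the slash's representation twist. Summing over $n\geq 1$ and over the cosets $\gamma\in\tilde\Gamma_\infty\bs\tilde\Gamma$ then reassembles precisely the double sum over nonzero $(c,d)$, yielding the stated identity.

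The main obstacle I anticipate is bookkeeping the $T$-invariance carefully: the stabilizer $\tilde\Gamma_\infty=\langle T\rangle$ acts on the seed function, and I must confirm that the seed $\exp\left(-\frac{\pi n^2}{2|\Delta|\Im(\tau)\ell_z^2}\right)\Xi(\tau,\mu,n,0)$ is genuinely $\mid_{1/2,\tilde\rho_K}$-invariant under $T$, so that the sum over $\tilde\Gamma_\infty\bs\tilde\Gamma$ is well-defined. This amounts to checking via \eqref{defxih} that $\Xi(\tau+1,\mu,n,0)=\tilde\rho_K(T)^{-1}\Xi(\tau,\mu,n,0)$ times the correct factor, together with the obvious invariance of the $\Im(\tau)$-dependent Gaussian; the sign and quadratic-form phase from $\rho_K(T)$ must be tracked exactly. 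Once this invariance and the primitive-pair bijection are in place, the remaining identification is a direct comparison of the two expressions term by term.
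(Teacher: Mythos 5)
Your proposal is correct and follows the same route as the paper: start from Lemma \ref{thetafourier2}, split off the $(c,d)=(0,0)$ term, factor the remaining pairs as $n$ times a primitive pair, identify primitive pairs with cosets in $\tilde\Gamma_\infty\bs\tilde\Gamma$, and invoke the transformation law of Theorem \ref{twistedxi} to recognize the summands as slashes of the seed $\exp\bigl(-\pi n^2/(2|\Delta|\Im(\tau)\ell_z^2)\bigr)\Xi(\tau,\mu,n,0)$. The extra care you take with the $T$-invariance of the seed and the cancellation of automorphy factors is exactly the bookkeeping the paper leaves implicit.
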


\begin{proof}
According to Lemma \ref{thetafourier2}, we have
\begin{align*}
\Theta_{\Delta,r }(\tau,z) &=
\frac{1}{\sqrt{2|\Delta|\ell_z^2}}\sum_{c,d\in \Z}
\exp\left(-\frac{\pi
|c\tau+d|^2}{2|\Delta|v\ell_z^2}\right)\Xi(\tau,\mu,d,-c)\\
&=\frac{1}{\sqrt{2|\Delta|\ell_z^2}}\Xi(\tau,\mu,0,0)\\
&\phantom{=}{} +\frac{1}{\sqrt{2|\Delta|\ell_z^2}}\sum_{n\geq
1}\sum_{\substack{c,d\in \Z\\ (c,d)=1}}\exp\left(-\frac{\pi n^2
|c\tau+d|^2}{2|\Delta|v\ell_z^2}\right)\Xi(\tau,\mu,nd,-nc).
\end{align*}
Writing the sum over coprime integers $c,d$ as a sum over
$\tilde\Gamma_\infty\bs \tilde\Gamma$ and using the transformation
law for $\Xi(\tau,\mu,\alpha,\beta)$ of Theorem \ref{twistedxi}, we
obtain the assertion.
\end{proof}

According to Proposition \ref{xispecial}, the function
$\Xi(\tau,0,0,0)$ in Theorem \ref{thetafourier3} vanishes when
$\Delta\neq 1$. When $\Delta=1$ we have
\[
\Xi(\tau,0,0,0)=\sum_{\lambda\in K'}
e(Q(\lambda)\tau)\frake_\lambda,
\]
and so $\Xi(\tau,0,0,0)$ is the usual vector valued holomorphic
theta function of the one-dimensional positive definite lattice $K$.

Let $k_0$ be a basis vector for $K$. If $y\in K\otimes\R$, we write
$y>0$ if $y$ is a positive multiple of $k_0$. Let $f\in
H_{1/2,\tilde\rho_L}$. We define the Weyl vector corresponding to
$f$ and $\ell$ to be the unique $\rho_{f,\ell}\in K'\otimes\R$  such
that
\begin{align}
\label{weylvector}
(\rho_{f,\ell},y)=\frac{\sqrt{(y,y)}}{8\pi\sqrt{2|\Delta|}}\int_\calF^{reg}
\langle f(\tau),\Xi(\tau,0,0,0)\rangle v^{1/2}\frac{du\,dv}{v^{2}}
\end{align}
for all $y\in K\otimes \R$ with $y>0$. Here $\calF$ denotes the
standard fundamental domain for the action of $\Sl_2(\Z)$ on $\H$,
and the integral has to be regularized as in \cite{Bo1}. We have,
$\rho_{f,\ell}=0$ when $\Delta\neq 1$. (This is also true for cusps
given by primitive isotropic vectors $\ell$ with $(\ell,L)\neq \Z$.)
One can show that $\rho_{f,\ell}$ does not depend on the choice of
the vector $\ell'$. The sign of $\rho_{f,\ell}$ depends on the
choice of $k_0$.


We conclude this section with an important fact on the rationality
of Weyl vectors $\rho_{f,\ell}$.

\begin{proposition}
\label{prop:weylvectorrational} Let $f\in H_{1/2,\tilde\rho_L}$ be a
harmonic weak Maass form with coefficients $c^\pm(m,h)$ as in
\eqref{deff}. If $c^+(m,h)\in \Q$ for all $m\leq 0$ and
 $f$ is orthogonal to weight $1/2$ cusp forms,
then  $\rho_{f,\ell}\in \Q$.
\end{proposition}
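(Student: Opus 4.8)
The plan is to reduce the statement to a regularized Petersson pairing of $f$ against a \emph{rational} theta series, and then, using orthogonality to cusp forms, to unfold that pairing into a $\Q$-linear combination of the (rational) principal part coefficients of $f$. First I would dispose of the case $\Delta\neq 1$: by Proposition \ref{xispecial} the function $\Xi(\tau,0,0,0)$ vanishes, so $\rho_{f,\ell}=0$ and there is nothing to prove. Hence assume $\Delta=1$. Then, as observed after Theorem \ref{thetafourier3}, $\theta:=\Xi(\tau,0,0,0)=\sum_{\lambda\in K'}e(Q(\lambda)\tau)\frake_\lambda$ is the holomorphic weight $1/2$ theta series of the positive definite unary lattice $K$; in particular $\theta\in M_{1/2,\rho_L}$ and all its Fourier coefficients are lattice representation numbers, hence lie in $\Q$. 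With this identification the defining integral \eqref{weylvector} is, up to the explicit factor $\tfrac{\sqrt{(y,y)}}{8\pi\sqrt{2}}$, the regularized Petersson pairing $(f,\theta)^{reg}$ of two weight $1/2$ forms. Writing $k_0$ for a basis of $K$ and recalling that $(k_0,k_0)\in\Q_{>0}$, it suffices to prove $(\rho_{f,\ell},k_0)\in\Q$, i.e. that $\sqrt{(k_0,k_0)}\,(f,\theta)^{reg}$ is a rational multiple of $\pi$.

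Next I would exploit the orthogonality hypothesis. Decomposing $\theta=\theta_{\mathrm{cusp}}+\theta_{\mathrm{Eis}}$ into its cuspidal part and the orthogonal complement inside $M_{1/2,\rho_L}$ -- a decomposition defined over $\Q$, so that $\theta_{\mathrm{Eis}}$ again has rational coefficients -- the assumption $(f,s)^{reg}=0$ for all $s\in S_{1/2,\rho_L}$ yields $(f,\theta)^{reg}=(f,\theta_{\mathrm{Eis}})^{reg}$, so that only the non-cuspidal part of $\theta$ contributes. I would also record the underlying uniqueness principle: by \eqref{pairalt} the pairing $\{G,f\}$ depends only on the principal part of $f$, so if $f$ had trivial principal part then $\xi_{1/2}(f)$ would be orthogonal to all of $M_{3/2,\bar\rho_L}\supset S_{3/2,\bar\rho_L}$, forcing $\xi_{1/2}(f)=0$; combined with the exactness of \eqref{ex-sequ} and positivity of the Petersson norm this shows that an $f$ orthogonal to cusp forms is determined by its principal part. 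Consequently $f\mapsto\rho_{f,\ell}$ is well defined and $\Q$-linear in the (rational) principal part, and the claim is reduced to evaluating $(f,\theta_{\mathrm{Eis}})^{reg}$.

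Finally I would compute $(f,\theta_{\mathrm{Eis}})^{reg}$ by writing the Eisenstein part as a sum over $\tilde\Gamma_\infty\bs\tilde\Gamma$, which turns the regularized pairing into a strip integral. The inner $u$-integration annihilates the entire non-holomorphic part $f^-$, whose frequencies are negative by \eqref{deff-} while those of the holomorphic theta coefficients are non-negative, and likewise the polar part of $f^+$; what survives is a finite $\Q$-linear combination of the principal part coefficients $c^+(n,h)$ with $n\le 0$, weighted by the Fourier coefficients of $\theta_{\mathrm{Eis}}$. Since the weight $1/2$ Eisenstein coefficients are special values of Dirichlet $L$-functions of the type appearing in \cite{Br}, hence rational, and since the $v$-integration contributes a Gamma factor producing $\pi$ while the geometric factor $\ell_z^2$ produces $(k_0,k_0)^{-1/2}$, the quantity $\sqrt{(k_0,k_0)}\,(f,\theta)^{reg}/\pi$ comes out rational, which is exactly what is needed.

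The step I expect to be the main obstacle is making this unfolding rigorous for the low weight, analytically continued Eisenstein series $\theta_{\mathrm{Eis}}$. A naive Rankin--Selberg computation is circular: the residue at $s=1$ of the unfolded convolution is again the regularized inner product and a priori involves the transcendental coefficients $c^+(n,h)$ with $n>0$. It is precisely orthogonality to cusp forms that removes the cuspidal contribution carrying these transcendental periods, so that the integral is governed only by the Eisenstein/principal-part data; pinning Borcherds' regularization (see \cite{Bo1}) to the correct special value and confirming the rationality of the surviving Eisenstein coefficients is the technical heart of the argument.
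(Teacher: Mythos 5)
Your opening reductions agree with the paper: $\rho_{f,\ell}=0$ for $\Delta\neq 1$ by Proposition \ref{xispecial}, and for $\Delta=1$ the problem is to show that the regularized pairing of $f$ against the rational weight $1/2$ theta series $\Xi(\tau,0,0,0)$ in \eqref{weylvector} is rational, with orthogonality to cusp forms available to discard a cuspidal contribution. But the step you yourself flag as ``the technical heart'' is precisely the content of the proposition, and your proposed route to it does not go through. The paper's key input, absent from your argument, is the identity
\[
\xi_{3/2}(E_{3/2})(\tau)=C\frac{\sqrt{N}}{16\pi}\,\Xi(\tau,0,0,0)+s(\tau),
\]
where $E_{3/2}$ is the weight $3/2$ Eisenstein series of type $\bar\rho_L$ with constant term $\frake_0$, $C\in\Q^\times$, and $s\in S_{1/2,\rho_L}$. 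Orthogonality kills the $s$-term, and the remaining integral is then evaluated by the Stokes-type pairing formula \eqref{pairalt}: it equals (up to the rational constant) $\{E_{3/2},f\}=\sum_{n\leq 0}c^+(n,h)b(-n,h)$, a \emph{finite} sum of the rational principal-part coefficients of $f$ weighted by the holomorphic coefficients $b$ of $E_{3/2}$, which are generalized class numbers and hence rational. It is this dualization through $\xi_{3/2}$ that moves the computation onto the principal part; nothing in your proposal replaces it.

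Your substitute --- decomposing $\theta$ inside $M_{1/2,\rho_L}$ and unfolding the ``Eisenstein part'' over $\tilde\Gamma_\infty\bs\tilde\Gamma$ --- fails on two counts. First, there is no convergent Eisenstein presentation in weight $1/2$; the orthogonal complement of $S_{1/2,\rho_L}$ in $M_{1/2,\rho_L}$ consists of unary theta series, and any unfolding must proceed via Hecke's trick against an auxiliary weight $0$ Eisenstein series, which is exactly the delicate regularized computation (the analogue of Lemma 9.5 of \cite{Bo1}) that the paper explicitly warns is subtle beyond prime level and deliberately avoids. Second, your accounting of which terms survive the $u$-integration is reversed: since the holomorphic theta series has only frequencies $n\geq 0$ and enters the pairing conjugated, the $u$-integral annihilates $f^-$ \emph{and the polar terms of $f^+$}, and what survives is $\sum_{n\geq 0}c^+(n,h)\,r(n,h)\,(\cdots)$ --- precisely the a priori transcendental coefficients with $n>0$. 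So the direct pairing is not ``governed only by the principal-part data''; this is the circularity you note, and it is not removed by orthogonality to cusp forms alone (the theta series is not cuspidal). Without the $E_{3/2}$ identity, or an equivalent mechanism transferring the pairing to the principal part, the proof is incomplete.
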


\begin{proof}
The idea of the proof is similar to \S9 of \cite{Bo1}. But notice
that Lemma 9.5 of \cite{Bo1} is actually only true if $N$ is a
prime. Therefore we need some additional care. Since
$\rho_{f,\ell}=0$ when $\Delta\neq 1$, we only need to consider the
case $\Delta=1$. Let $E_{3/2}(\tau)$ be the weight $3/2$ Eisenstein
series for $\tilde \Gamma$ with representation $\bar \rho_L$
normalized to have constant term $\frake_0$. It turns out that
\[
\xi_{3/2}(E_{3/2})(\tau)=
C\frac{\sqrt{N}}{16\pi}\Xi(\tau,0,0,0)+s(\tau),
\]
where $C$ is a non-zero rational constant and $s\in S_{1/2,\rho_L}$
is a cusp form. Hence the integral in \eqref{weylvector} can be
computed by means of \eqref{pairalt} in terms of the coefficients of
the holomorphic part of the Eisenstein series $E_{3/2}$. These
coefficients are known to be generalized class numbers and thereby
rational. Since $f$ is orthogonal to cusp forms, the cusp form $s$
does not give any contribution to the integral. This concludes the
proof of the proposition.
\end{proof}

\begin{remark} Proposition~\ref{prop:weylvectorrational} does not
hold without  the hypothesis that $f$ is orthogonal to weight $1/2$
cusp forms.
\end{remark}

\section{Regularized theta lifts of weak Maass forms}

\label{sect:lift}

In this section we generalize the regularized theta lift of
Borcherds, Harvey, and Moore in two ways to construct automorphic
forms on modular curves. First, we work with the twisted Siegel
theta functions of the previous section as kernel functions, and
secondly, we consider the lift for harmonic weak Maass forms.

Such generalizations have been studied previously in other settings.
In \cite{Ka}, Kawai constructed twisted theta lifts of weakly
holomorphic modular forms in a different way. However, his
automorphic products  are of higher level, and only twists by {\em
even} real Dirichlet characters are considered. In \cite{Br} and
\cite{BF}, the (untwisted) regularized theta lift was studied on
harmonic weak Maass forms and was used to construct automorphic
Green functions and harmonic square integrable representatives for
the Chern classes of Heegner divisors. However, the Chern class
construction only leads to non-trivial information about Heegner
divisors if the modular variety under consideration has dimension
$\geq 2$ (i.e. for $\Orth(2,n)$ with $n\geq 2$).   Here we consider
the $\Orth(2,1)$-case of modular curves.  The Chern class of a
divisor on a curve is just its degree, and does not contain much
arithmetic information. Hence, the approach of \cite{Br} and
\cite{BF} to study Heegner divisors does not apply.

Instead of using automorphic Green functions to construct the Chern
classes of Heegner divisors, we employ them to construct canonical
differentials of the third kind associated to twisted Heegner
divisors. By the results of Waldschmidt and Scholl of
Section~\ref{sect:difftk}, such differentials carry valuable
arithmetic information. We begin by fixing some notation.

As in Section \ref{sect:2}, let $\Delta$ be a fundamental
discriminant and let $r \in \Z$ such that $\Delta\equiv
r^2\pmod{4N}$.
We let $\ell,\ell'\in L$ be the isotropic vectors
\[
\ell =\zxz{0}{1/N}{0}{0},\qquad \ell'=\zxz{0}{0}{1}{0}.
\]
Then we have $K=\Z\kzxz{1}{0}{0}{-1}$. For $\lambda \in K\otimes\R$,
we write $\lambda>0$ if $\lambda$ is a positive multiple of
$\kzxz{1}{0}{0}{-1}$. Following Section 13 of \cite{Bo1}, and
Section 3.2 of \cite{Br}, we identify the complex upper half plane
$\H$ with an open subset of $K\otimes\C$ by mapping $t\in \H$ to
$\kzxz{1}{0}{0}{-1}\otimes t$. Moreover, we identify $\H$ with the
Grassmannian $\Gr(V)$ by mapping $t\in \H$ to the positive definite
subspace
\[
z(t)=\R \Re\zxz{t}{-t^2}{1}{-t} +\R \Im\zxz{t}{-t^2}{1}{-t}
\]
of $V(\R)$. Under this identification,  the action of $\Spin(V)$ on
$\H$ by fractional linear transformations corresponds to the linear
action on $\Gr(V)$ through $\SO(V)$. We have that
\begin{align*}
\ell_z^2&=\frac{1}{2N\Im(t)^2},\\
(\lambda,\mu)&=(\lambda,\Re(t)),\\
\lambda^2/\ell_z^2&=(\lambda,\Im(t))^2,
\end{align*}
for $\lambda\in K\otimes\R$. In the following we will frequently
identify $t$ and $z(t)$ and simply write $z$ for this variable. We
let $z=x+iy$ be the decomposition into real and imaginary part.

We now define twisted Heegner divisors on the modular curve
$X_0(N)$. For any vector $\lambda\in L'$ of negative norm, the
orthogonal complement $\lambda^\perp\subset V(\R)$ defines a point
$Z(\lambda)$ in $\Gr(V)\cong \H$. For $h\in L'/L$ and a negative
rational number $m\in \Z+\sgn(\Delta)Q(h)$, we consider the twisted
Heegner divisor
\begin{align}
Z_{\Delta,r }(m,h):= \sum_{\substack{\lambda\in L_{d\Delta,h r
}/\Gamma_0(N)}} \frac{\chi_{\Delta}(\lambda)}{w(\lambda)}
Z(\lambda)\in\Div(X_0(N))_\Q,
\end{align}
where $d:=4Nm\sgn(\Delta)\in \Z$. Note that $d$ is a discriminant
which is congruent to a square modulo $4N$ and which has the
opposite sign as $\Delta$. Here $w(\lambda)$ is the order of the
stabilizer of $\lambda$ in $\Gamma_0(N)$. (So $w(\lambda)\in
\{2,4,6\}$,  and $w(\lambda)=2$ when $d\Delta<-4$.) We also consider
the  degree zero divisor
\begin{align}
y_{\Delta,r}(m,h):=Z_{\Delta,r }(m,h)-\deg(Z_{\Delta,r
}(m,h))\cdot\infty.
\end{align}
We have $y_{\Delta,r}(f)=Z_{\Delta,r}(f)$ when $\Delta\neq 1$.
%
%
By the theory of complex multiplication, the divisor $Z_{\Delta,r
}(h,m)$ is defined over $\Q(\sqrt{D},\sqrt{\Delta})$ (for example,
see \S12 of \cite{Gro}). The following lemma shows that it is
defined over $\Q(\sqrt{\Delta})$ and summarizes some further
properties.

\begin{lemma}
\label{properties} Let $w_N$ be the Fricke involution on $X_0(N)$,
and let $\tau$ denote complex conjugation, and let $\sigma$ be the
non-trivial automorphism of
$\Q(\sqrt{D},\sqrt{\Delta})/\Q(\sqrt{D})$. Then the following are
true:
\begin{enumerate}
\item[(i)]
$w_N (Z_{\Delta,r }(m,h))=Z_{\Delta,r }(m,-h)$,
\item[(ii)]
$\tau (Z_{\Delta,r }(m,h))=Z_{\Delta,r }(m,-h)$,
\item[(iii)]
$\sigma (Z_{\Delta,r }(m,h))=-Z_{\Delta,r }(m,h)$,
\item[(iv)]
$Z_{\Delta,r }(m,-h)=\sgn(\Delta)Z_{\Delta,r }(m,h)$,
\item[(v)]
$Z_{\Delta,r}(m,h)$ is defined over $\Q(\sqrt{\Delta})$.
\end{enumerate}
\end{lemma}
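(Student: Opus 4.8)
The plan is to prove all five statements in a uniform way, by tracking, for each map in question, its effect on the four ingredients of
$Z_{\Delta,r}(m,h)=\sum_{\lambda\in L_{d\Delta,hr}/\Gamma_0(N)}\frac{\chi_\Delta(\lambda)}{w(\lambda)}Z(\lambda)$: the CM point $Z(\lambda)=\lambda^\perp$, the genus character value $\chi_\Delta(\lambda)$, the residue class $h$ (equivalently the coset of $\lambda$ in $L'/L\cong\Z/2N\Z$), and the stabilizer order $w(\lambda)$. In each case the map permutes the index set, preserves $w$, sends $Z(\lambda)$ to the image point, and multiplies $\chi_\Delta$ by a controlled sign, so that reindexing the sum gives the asserted identity. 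Throughout I use that $D=d\Delta<0$, so the $Z(\lambda)$ are genuine points of $\H$.

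For (i), the Fricke involution $w_N=\kzxz{0}{-1}{N}{0}$ acts on $V$ by conjugation, and a direct computation gives $w_N.\kzxz{b/2N}{-a/N}{c}{-b/2N}=\kzxz{-b/2N}{-c/N}{a}{b/2N}$; it thus sends the form $[a,b,Nc]$ to $[Nc,-b,Na]$, preserves the norm, and flips the coset $h\mapsto-h$. Since $w_N\in\SO^+(L)$ by Proposition \ref{g0n} and $\chi_\Delta$ is $\SO^+(L)$-invariant, we have $\chi_\Delta(w_N.\lambda)=\chi_\Delta(\lambda)$; as $w_N$ normalizes $\Gamma_0(N)$ it preserves orbits and stabilizer orders and satisfies $w_N(Z(\lambda))=Z(w_N.\lambda)$, so reindexing by $\lambda'=w_N.\lambda\in L_{d\Delta,-hr}/\Gamma_0(N)$ yields (i). For (ii) I use that complex conjugation on $X_0(N)$ is induced by $z\mapsto-\bar z$ (the modular functions generating the function field have rational $q$-expansions); this sends the Heegner point of $[a,b,Nc]$ to that of $[a,-b,Nc]$, again flipping $h\mapsto-h$, fixing $\chi_\Delta$ (the explicit formula \eqref{chiexplicit} does not involve $b$), and preserving $w$, which gives (ii). For (iv) I apply $\lambda\mapsto-\lambda$: this fixes $Z(\lambda)$ and $w(\lambda)$, flips the coset, and by \eqref{chiexplicit} multiplies the character by $\leg{\Delta_1}{-1}\leg{\Delta_2}{-1}=\sgn(\Delta_1)\sgn(\Delta_2)=\sgn(\Delta)$; reindexing gives (iv).

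The heart of the matter is (iii), where the elementary bookkeeping above does not suffice because $\sigma$ fixes $\Q(\sqrt D)$, and one must use complex multiplication to describe its action on the individual Heegner points. Each $Z(\lambda)$ of discriminant $D=d\Delta$ is defined over the ring class field $H_D$ of $\calO_D\subset\Q(\sqrt D)$, and $\Gal(H_D/\Q(\sqrt D))\cong\Cl(\calO_D)$ acts on these points compatibly with the class-group action, preserving the residue $h$. The decisive input from genus theory is that $\chi_\Delta$ is exactly the quadratic character of $\Cl(\calO_D)$ whose fixed field is $\Q(\sqrt D,\sqrt\Delta)\subset H_D$; in particular it is a class function and is multiplicative, $\chi_\Delta(AB)=\chi_\Delta(A)\chi_\Delta(B)$. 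Since $\sigma$ restricts nontrivially to $\Q(\sqrt\Delta)$, any lift of it to $\Gal(H_D/\Q(\sqrt D))$ is the Artin symbol of a class $A$ with $\chi_\Delta(A)=-1$. Computing $\sigma(Z)$ through this lift and reindexing $\lambda\mapsto A^{-1}\lambda$ then produces the factor $\chi_\Delta(A)^{-1}=-1$, giving (iii). I expect this identification of $\chi_\Delta$ with the Galois character, together with the careful matching of the class-group action with the residue $h$, the primitivity/Heegner conditions, and the stabilizer weights, to be the main obstacle; this is precisely the bookkeeping of \cite{GKZ} (Section I) and \cite{Gro} (Section 12), which I would invoke.

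Finally, (v) follows formally from (ii), (iii), and (iv). Combining (ii) and (iv) gives $\tau(Z)=\sgn(\Delta)\,Z$, where $\tau$ denotes complex conjugation. On $\Q(\sqrt D,\sqrt\Delta)$ with $D<0$, $\tau$ restricts to $\sqrt D\mapsto-\sqrt D$ together with $\sqrt\Delta\mapsto\sgn(\Delta)\sqrt\Delta$. Writing $\sigma_2$ for the generator of $\Gal(\Q(\sqrt D,\sqrt\Delta)/\Q(\sqrt\Delta))$, one checks that $\sigma_2=\tau$ when $\Delta>0$ and $\sigma_2=\sigma\tau$ when $\Delta<0$. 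In the first case $\sigma_2(Z)=\tau(Z)=Z$, and in the second $\sigma_2(Z)=\sigma(\tau(Z))=\sigma(-Z)=-\sigma(Z)=Z$ by (iii); either way $Z$ is fixed by $\Gal(\Q(\sqrt D,\sqrt\Delta)/\Q(\sqrt\Delta))$, so $Z_{\Delta,r}(m,h)$ is defined over $\Q(\sqrt\Delta)$.
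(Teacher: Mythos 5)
Your proof is correct and follows essentially the same route as the paper's, which simply asserts (i) and (ii) by direct computation, (iv) from the definition of the genus character, (iii) from the theory of complex multiplication (citing Birch--Stephens and Gross), and (v) as a formal consequence of (ii)--(iv); you have supplied the details the paper leaves implicit, and your deduction of (v) via the case split on $\sgn(\Delta)$ is exactly what is intended. (One cosmetic slip: the Fricke image of the form $[a,b,Nc]$ should read $[c,-b,Na]$ rather than $[Nc,-b,Na]$, which does not affect the argument.)
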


\begin{proof}
Properties (i) and (ii) are verified by a straightforward
computation, and (iv) immediately follows from the definition of the
genus character $\chi_\Delta$. Moreover, (iii) follows from the
theory of complex multiplication (see p. 15 of \cite{BS}, and
\cite{Gro}). Finally, (v) is a consequence of (ii), (iii), (iv).
\end{proof}

\begin{remark}
Our definition of Heegner divisors differs slightly from \cite{GKZ}.
They consider the orthogonal complements of vectors $\lambda
=\kzxz{b/2N}{-a/N}{c}{-b/2N}\in L'$ of negative norm with $a>0$, or
equivalently, zeros in the upper half plane of {\em positive
definite} binary quadratic forms.
\end{remark}


Recall that $\tilde\rho_L=\rho_L$ for $\Delta>0$, and
$\tilde\rho_L=\bar\rho_L$ for $\Delta<0$. Let $f\in
H_{1/2,\tilde\rho_L}$ be a harmonic weak Maass form of weight $1/2$
with representation $\tilde \rho_L$.
We denote the coefficients of $f=f^++f^-$ by $c^\pm(m,h)$ as in
\eqref{deff}. Note that $c^\pm(m,h)=0$ unless $m\in
\Z+\sgn(\Delta)Q(h)$. Moreover, by means of \eqref{eq:weilz} we see
that $c^\pm(m,h)=c^\pm(m,-h)$ if $\Delta>0$, and
$c^\pm(m,h)=-c^\pm(m,-h)$ if $\Delta<0$. Throughout we assume that
$c^+(m,h)\in \R$ for all $m$ and $h$.

Using the Fourier coefficients of the principal part of $f$, we
define the twisted Heegner divisor associated to $f$ by
\begin{align}
Z_{\Delta,r}(f) &:=
\sum_{h\in L'/L}\sum_{m<0}
c^+(m,h)Z_{\Delta,r }(m,h)\in \Div(X_0(N))_\R,\\
y_{\Delta,r}(f) &:=
\sum_{h\in L'/L}\sum_{m<0} c^+(m,h)y_{\Delta,r }(m,h)\in
\Div(X_0(N))_\R.
\end{align}
Notice that $y_{\Delta,r}(f)=Z_{\Delta,r}(f)$ when $\Delta\neq 1$.
The divisors lie in $\Div(X_0(N))_\Q$ if the coefficients of the
principal part of $f$ are rational.

We define a regularized theta integral of $f$ by
\begin{align}
\label{defphi} \Phi_{\Delta,r }(z,f) = \int_{\tau\in \calF}^{reg}
\langle f(\tau),\Theta_{\Delta,r }(\tau,z)\rangle
v^{1/2}\,\frac{du\,dv}{v^2}.
\end{align}
Here $\calF$ denotes the standard fundamental domain for the action
of $\Sl_2(\Z)$ on $\H$, and the integral has to be regularized as in
\cite{Bo1}.

\begin{proposition}
  \label{prop:sing}  The theta integral $\Phi_{\Delta,r }(z,f)$ defines
  a $\Gamma_0(N)$-invariant function on $\H\bs
Z_{\Delta,r}(f)$ with a logarithmic singularity\footnote{If $X$ is a
normal
    complex space, $D\subset X$ a Cartier divisor, and $f$ a smooth
    function on $X\setminus\supp(D)$, then $f$ has a logarithmic
    singularity along $D$, if for any local equation $g$ for $D$ on an
    open subset $U\subset X$, the function $f-\log|g|$ is smooth on
    $U$.}
on the divisor $-4 Z_{\Delta,r}(f)$. If $\Omega$ denotes the
invariant Laplace operator on $\H$, we have
\[
\Omega \Phi_{\Delta,r }(z,f) = \leg{\Delta}{0} c^+(0,0).
\]
\end{proposition}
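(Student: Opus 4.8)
The plan is to analyze the theta integral $\Phi_{\Delta,r}(z,f)$ by substituting the Fourier expansion of $\Theta_{\Delta,r}(\tau,z)$ in the variable $z$ given by Theorem~\ref{thetafourier3}, thereby separating the contribution responsible for the singularities from the smooth part. First I would establish the singularity claim. The divisor $Z_{\Delta,r}(f)$ is built from the principal part coefficients $c^+(m,h)$ with $m<0$, and each such negative index contributes Heegner points $Z(\lambda)$ for $\lambda\in L_{d\Delta,hr}$. The regularized integral \eqref{defphi} develops singularities precisely where the sum over the lattice $L$ in $\Theta_{\Delta,r}(\tau,z)$ has terms $\lambda$ with $\lambda_z=0$, i.e.\ where $z$ lies on $\lambda^\perp=Z(\lambda)$; near such a point the relevant archimedean integral diverges logarithmically. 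Tracking the pairing $\langle f(\tau),\Theta_{\Delta,r}(\tau,z)\rangle$, the coefficient $c^+(m,h)$ of the principal part multiplies the $\lambda$-term, and a standard computation of the regularized integral (following Theorem~6.1 of \cite{Bo1}) shows the singularity is of logarithmic type with the stated normalization $-4\,Z_{\Delta,r}(f)$; the factor $-4$ and the weight $w(\lambda)$ emerge from the precise constant in the Gaussian integral and the order of the stabilizer.

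For the Laplace equation, I would exploit the fact that $\Theta_{\Delta,r}(\tau,z)$ satisfies a differential equation relating $\Omega_z$ (the invariant Laplacian in $z$) to $\Delta_\tau$ (the weight $1/2$ hyperbolic Laplacian in $\tau$). This is the analogue, in the twisted setting, of the fundamental identity used by Borcherds: applying $\Omega_z$ to the theta kernel can be converted, up to a first-order term, into the action of the weight $1/2$ Laplacian $\Delta_{1/2}$ on the $\tau$-variable, because both operators arise from the Casimir element acting on the theta kernel viewed as a function on the relevant symmetric spaces. Concretely, one expects an identity of the shape $\Omega_z\,\Theta_{\Delta,r}(\tau,z)=\Delta_{1/2,\tau}\,\Theta_{\Delta,r}(\tau,z)$ (possibly after adjusting by the weight-raising/lowering operators). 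I would then commute $\Omega_z$ past the regularized integral sign, transfer it to $\Delta_\tau$, and integrate by parts on $\calF$.

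After integration by parts, the interior term vanishes because $f$ is harmonic: $\Delta_{1/2}f=0$ since $f\in H_{1/2,\tilde\rho_L}$. Hence $\Omega_z\Phi_{\Delta,r}(z,f)$ reduces entirely to boundary contributions at the cusp $\infty$ of $\calF$ (the truncation boundary $\Im\tau=T$ as $T\to\infty$ in the regularization). Evaluating this boundary term requires the constant-term behavior of the pairing, which by Theorem~\ref{thetafourier3} is governed by $\Xi(\tau,0,0,0)$; by Proposition~\ref{xispecial} this function vanishes unless $\Delta=1$, and when $\Delta=1$ it is the holomorphic theta function of $K$ with constant term $\frake_0$. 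Pairing against the holomorphic part of $f$ picks out the coefficient $c^+(0,0)$, and the Legendre-symbol normalization $\leg{\Delta}{0}$ (equal to $1$ iff $\Delta=1$) precisely encodes this dichotomy, yielding $\Omega\Phi_{\Delta,r}(z,f)=\leg{\Delta}{0}c^+(0,0)$.

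The main obstacle I anticipate is the careful bookkeeping of the boundary term in the regularized integral: one must justify interchanging $\Omega_z$ with the regularized integral, verify the precise differential identity relating $\Omega_z$ and $\Delta_\tau$ on the twisted kernel (the twist by $\chi_\Delta$ and the rescaling by $1/|\Delta|$ in \eqref{defthetah} must be checked not to disturb the Casimir identity), and then extract the correct constant from the surviving constant-coefficient term. The logarithmic singularity analysis, while technically involved, follows the established pattern of \cite{Bo1}; the genuinely delicate point is confirming that the non-holomorphic part $f^-$ contributes nothing to $\Omega_z\Phi$ beyond what harmonicity already kills, so that only the single coefficient $c^+(0,0)$ survives.
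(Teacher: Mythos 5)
Your outline follows the same route as the paper's own (very terse) proof, which simply invokes Section 6 of \cite{Bo1} for the singularity statement and Theorem 4.6 of \cite{Br} for the Laplace equation: the logarithmic singularity along $Z(\lambda)$ comes from the terms with $Q(\lambda)=m<0$ and $\lambda_z\to 0$ paired against the exponentially growing principal part (the non-holomorphic part $f^-$ contributes nothing here since $c^-(n,h)W(2\pi nv)e(n\tau)$ decays), and the Laplace equation follows from the Casimir identity relating $\Omega_z$ on the kernel to the weight $1/2$ Laplacian in $\tau$, Stokes' theorem on the truncated fundamental domain, harmonicity of $f$, and a surviving boundary term as the truncation parameter goes to infinity. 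In outline this is exactly what the cited references do.

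One step is misattributed, though. The boundary term in the Stokes argument lives at the cusp of $\Sl_2(\Z)\backslash\H$ in the variable $\tau$: it is computed from the constant Fourier coefficient in $u=\Re\tau$ of $\langle f(\tau),\Theta_{\Delta,r}(\tau,z)\rangle v^{1/2}$ at $\Im\tau=T\to\infty$, and the only non-decaying contribution is the $\lambda=0$ term of \eqref{defthetah}, whose coefficient is $\chi_\Delta(0)=\leg{\Delta}{0}$, paired with $c^+(0,0)$. The object you invoke instead, $\Xi(\tau,0,0,0)$ via Theorem \ref{thetafourier3}, belongs to the expansion of the kernel at the cusp of $X_0(N)$ in the variable $z$ (valid for $y\gg0$); pairing $f$ against $\Xi(\tau,0,0,0)$ is precisely the definition \eqref{weylvector} of the Weyl vector, which enters the expansion of $\Phi_{\Delta,r}(z,f)$ as the term $8\pi(\rho_{f,\ell},y)$ --- linear in $y$ and hence annihilated by $\Omega$, so it is not the source of the constant $\leg{\Delta}{0}\,c^+(0,0)$. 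The two quantities happen to vanish under the same condition $\Delta\neq1$, which is why your answer comes out right, but the mechanism as described would compute the wrong boundary integral; replacing it with the $u$-constant-term analysis of the $\lambda=0$ term repairs the argument without changing anything else.
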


\begin{proof}
Using the argument of Section 6 of \cite{Bo1}, one can show that
$\Phi_{\Delta,r }(z,f)$ defines
  a $\Gamma_0(N)$-invariant function on $\H\bs Z_{\Delta,r}(f)$
with a logarithmic singularity on $-4 Z_{\Delta,r}(f)$. To prove the
claim concerning the Laplacian, one may argue as in Theorem 4.6 of
\cite{Br}.
\end{proof}

\begin{remark}
\label{rem:realan}
  The fact that the function $\Phi_{\Delta,r }(z,f)$ is subharmonic
  implies that it is real analytic on $\H\bs Z_{\Delta,r}(f)$ by a
  standard regularity theorem for elliptic differential operators.
\end{remark}

We now describe the Fourier expansion of $\Phi_{\Delta,r }(z,f)$.
Recall the definition of the Weyl vector $\rho_{f,\ell}$
corresponding to $f$ and $\ell$, see \eqref{weylvector}.

\begin{theorem}
\label{thm:green} For $z\in \H$ with $y\gg 0$, we have
\begin{align}
\label{phi:exp}
\Phi_{\Delta,r }(z,f)&=-4\sum_{\substack{\lambda\in K'\\
\lambda>0}}\sum_{b\;(\Delta)}
\leg{\Delta}{b}c^+(|\Delta|\lambda^2/2,r
\lambda) \log\left|1-e((\lambda,z)+b/\Delta)\right|\\
\nonumber &\phantom{=}{}+ \begin{cases}
2\sqrt{\Delta}c^+(0,0)L(1,\chi_{\Delta}),&\text{if $\Delta\neq 1$,}\\[.5ex]
8\pi(\rho_{f,\ell},y)-c^+(0,0)(\log(4\pi Ny^2)+\Gamma'(1)),
&\text{if $\Delta=1$.}
\end{cases}
\end{align}
\end{theorem}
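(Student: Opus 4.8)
The plan is to compute the integral \eqref{defphi} by the unfolding method of Borcherds (\cite{Bo1}, Section~6) and Bruinier (\cite{Br}, Chapter~2), adapted to the twisted kernel and to the fact that $f$ is only a harmonic weak Maass form. The starting point is to insert the Fourier expansion of the theta kernel given in Theorem~\ref{thetafourier3}, which writes $\Theta_{\Delta,r}(\tau,z)$ as a constant term proportional to $\Xi(\tau,0,0,0)$ plus a Poincar\'e-type series $\sum_{n\geq 1}\sum_{\gamma\in\tilde\Gamma_\infty\bs\tilde\Gamma}[\,\cdots\Xi(\tau,\mu,n,0)]\mid_{1/2,\tilde\rho_K}\gamma$. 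Accordingly $\Phi_{\Delta,r}(z,f)$ splits into a contribution of $\Xi(\tau,0,0,0)$ and a contribution of the Poincar\'e series, which I treat separately.

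For the Poincar\'e-series part, I would use the $\tilde\Gamma$-invariance of $\langle f(\tau),\cdot\rangle$ to unfold the regularized integral over $\calF$ against the sum over $\gamma\in\tilde\Gamma_\infty\bs\tilde\Gamma$, replacing $\int_\calF^{reg}$ by the regularized integral over the strip $\{0\le u\le 1,\ v>0\}$. Into this I substitute the Fourier expansion \eqref{deff} of $f=f^++f^-$ together with the explicit shape of $\Xi(\tau,\mu,n,0)$ from Proposition~\ref{xispecial}. A pleasant simplification occurs here: since $K$ is positive definite, the components of $\Xi(\tau,\mu,n,0)$ carry only frequencies $\lambda^2/2|\Delta|\ge 0$, while $f^-$ carries only negative frequencies; hence the $u$-integration, which by orthogonality of additive characters forces the frequencies to match, annihilates the entire non-holomorphic part $f^-$ and selects exactly the holomorphic coefficient $c^+(|\Delta|\lambda^2/2,r\lambda)$. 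This is the structural reason why only $c^+$ appears in the final formula.

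The remaining $v$-integration is elementary: after unfolding it reduces to integrals of the form $\int_0^\infty e^{-\pi n^2/(2|\Delta| v\ell_z^2)}e^{-\pi\lambda^2 v/|\Delta|}v^{-3/2}\,dv$, which are $K_{1/2}$-Bessel integrals and evaluate in closed form to a multiple of $\tfrac1n\exp(-2\pi n\,\Im((\lambda,z)))$. Combining this with the phase coming from the $\mu$-dependence of $\Xi$ gives $\tfrac1n e(-n(\lambda,z))$; expanding the genus-character factor $\leg{\Delta}{n}$ as a Gauss sum via \eqref{gauss}, and finally summing over $n\ge1$ by means of $-\log(1-w)=\sum_{n\ge1}w^n/n$, produces precisely the logarithmic terms $\log\bigl|1-e((\lambda,z)+b/\Delta)\bigr|$ together with the prefactor $-4\leg{\Delta}{b}$. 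Here one must keep careful track of the $|\Delta|$-rescaling built into the twisted theta function in order to land on the index $|\Delta|\lambda^2/2$ and the correct lattice $K'$.

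The constant terms come from two sources. The $\Xi(\tau,0,0,0)$ contribution is, by the very definition \eqref{weylvector} of the Weyl vector, equal to $8\pi(\rho_{f,\ell},y)$ when $\Delta=1$, and it vanishes when $\Delta\ne1$ by Proposition~\ref{xispecial}. The $\lambda=0$ summand of the Poincar\'e part, which the main sum excludes, supplies the remaining constant: for $\Delta\ne1$ one evaluates $\sum_{b\,(\Delta)}\leg{\Delta}{b}\log|1-e(b/\Delta)|$ by the classical formula expressing $L(1,\chi_\Delta)$ as such a log-sine sum, yielding $2\sqrt{\Delta}\,c^+(0,0)L(1,\chi_\Delta)$ (note that $c^+(0,0)=0$ when $\Delta<0$, so no imaginary quantity appears); for $\Delta=1$ the corresponding term diverges and its Borcherds regularization produces the growth term $-c^+(0,0)(\log(4\pi Ny^2)+\Gamma'(1))$. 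I expect the main obstacle to be precisely this regularization: one must justify that the regularized integral commutes with the unfolding and with the interchange of the $n$-summation and the $v$-integration, and isolate the divergent $v\to\infty$ contribution of the constant coefficient $c^+(0,0)$ correctly. The remaining steps are a routine, if lengthy, adaptation of \cite{Bo1} and \cite{Br} to the twisted setting.
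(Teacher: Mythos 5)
Your proposal follows essentially the same route as the paper's proof: insert the expansion of Theorem \ref{thetafourier3}, unfold against $\tilde\Gamma_\infty\bs\tilde\Gamma$, use Proposition \ref{xispecial} together with the positive definiteness of $K$ to see that $f^-$ drops out, evaluate the $v$-integral in closed form, convert $\leg{\Delta}{n}$ via the Gauss sum \eqref{gauss} into the $b\pmod{\Delta}$ logarithmic terms, and extract the $\lambda=0$ and $\Xi(\tau,0,0,0)$ contributions as the constant term. The paper likewise only carries out the regularization details for $\Delta\neq 1$ (deferring $\Delta=1$ to Borcherds), so the obstacle you flag is handled there in exactly the same way.
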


\begin{proof}
Here we carry out the proof only in the case $\Delta\neq 1$, for
which the regularization is slightly easier and there is no Weyl
vector term. We note that when $\Delta=1$, the proof is similar, and
when $f$ is weakly holomorphic it is contained in Theorem 13.3 of
\cite{Bo1}. In our proof we essentially follow the argument of
Theorem 7.1 of \cite{Bo1}, and Theorem 2.15 of \cite{Br}. In
particular, all questions regarding convergence can be treated
analogously. Inserting the formula of Theorem \ref{thetafourier3} in
definition \eqref{defphi} and unfolding, we obtain
\begin{align}
\label{eq:h1} \Phi_{\Delta,r }(z,f)&=
\frac{\sqrt{2}}{\sqrt{|\Delta|\ell_z^2}} \sum_{n\geq 1}
\int_{v=0}^\infty\int_{u=0}^1 \exp\left(-\frac{\pi
n^2}{2|\Delta|v\ell_z^2}\right)\langle
f(\tau),\Xi(\tau,\mu,n,0)\rangle \,du\,\frac{dv}{v^{3/2}}.
\end{align}
Here we have also used the fact that $\rho_{f,\ell}=0$ when
$\Delta\neq 1$. We temporarily denote the Fourier expansion of $f$
by
\[
f(\tau)=\sum_{h\in L'/L}\sum_{n\in \Q} c(n,h,v)e(n \tau).
\]
Inserting the formula for $\Xi(\tau,\mu,n,0)$ of Proposition
\ref{xispecial} in \eqref{eq:h1}, and carrying out the integration
over $u$, we obtain
\begin{align}
\label{fgeneral} \Phi_{\Delta,r}(z,f)&=\frac{\sqrt{2}\eps}{|\ell_z|}
\sum_{h\in K'/K}\sum_{\substack{\lambda\in K+r h\\Q(\lambda)\equiv
\Delta Q(h)\;(\Delta)}} \sum_{n\geq 1} \leg{\Delta}{n}
e\left(\frac{n}{|\Delta|}(\lambda,\mu)\right)\\
\nonumber &\phantom{=}{}\times \int_{v=0}^\infty
c(Q(\lambda)/|\Delta|,h,v) \exp\left(-\frac{\pi
n^2}{2|\Delta|v\ell_z^2}-\frac{2\pi\lambda^2v}{|\Delta|}\right)\,\frac{dv}{v^{3/2}}.
\end{align}
Since $\Delta$ is fundamental, the conditions
$Q(\lambda)\equiv\Delta Q(h)\pmod{\Delta}$ and $\lambda\equiv rh
\pmod{K}$ are equivalent to $\lambda=\Delta\lambda'$ and
$r\lambda'\equiv h\pmod{K}$ for some $\lambda'\in K'$. Consequently,
we have
\begin{align}
\label{fgeneral2}
\Phi_{\Delta,r}(z,f)&=\frac{\sqrt{2}\eps}{|\ell_z|}
\sum_{\substack{\lambda\in K'}} \sum_{n\geq 1} \leg{\Delta}{n}
e\left(\sgn(\Delta)n(\lambda,\mu)\right)\\
\nonumber &\phantom{=}{}\times \int_{v=0}^\infty
c(|\Delta|\lambda^2/2,r\lambda,v) \exp\left(-\frac{\pi
n^2}{2|\Delta|v\ell_z^2}-2\pi\lambda^2|\Delta|v\right)\,\frac{dv}{v^{3/2}}.
\end{align}
Notice that only the coefficients
$c(|\Delta|\lambda^2/2,r\lambda,v)$ where $\lambda\in K'$ occur in
the latter formula. Since $K$ is positive definite, the quantity
$|\Delta|\lambda^2/2$ is non-negative, and so  we have
\[
 c(|\Delta|\lambda^2/2,r\lambda,v)=c^+(|\Delta|\lambda^2/2,r\lambda),
\]
that is, only the coefficients of the ``holomorphic part'' $f^+$ of
$f$ give a contribution. We now compute the integral over $v$ (for
example, using page 77 of ~\cite{Br}). We obtain
\[
\int_{v=0}^\infty \exp\left(-\frac{\pi
n^2}{2|\Delta|v\ell_z^2}-2\pi\lambda^2|\Delta|v\right)\,\frac{dv}{v^{3/2}}
=\frac{\sqrt{2|\Delta|\ell_z^2}}{n}\exp(-2\pi n|\lambda|/|\ell_z|).
\]
Inserting this and separating the contribution of $\lambda=0$, we
get
\begin{align*}
\Phi_{\Delta,r}(z,f)&=2\sqrt{\Delta} c^+(0,0) \sum_{n\geq 1}
\leg{\Delta}{n}\frac{1}{n}\\
&\phantom{=}{}+ 4 \sum_{\substack{\lambda\in K'\\
\lambda>0}} c^+(|\Delta|\lambda^2/2,r\lambda)
\Re\left(\sqrt{\Delta}\sum_{n\geq 1} \frac{1}{n}\leg{\Delta}{n}
e\big(\sgn(\Delta)n(\lambda,\mu)+in|\lambda|/|\ell_z|\big)\right).
\end{align*}
Using the value of the Gauss sum \eqref{gauss}, we see that this is
equal to
\begin{align*}
\Phi_{\Delta,r}(z,f)&=2\sqrt{\Delta} c^+(0,0) L(1,\chi_\Delta)\\
&\phantom{=}{}- 4 \sum_{\substack{\lambda\in K'\\
\lambda>0}} \sum_{b\;(\Delta)} \leg{\Delta}{b}
c^+(|\Delta|\lambda^2/2,r\lambda) \log\left|
1-e\left(\frac{b}{\Delta}+(\lambda,\mu)+i\frac{|\lambda|}{|\ell_z|}\right)\right|.
\end{align*}
We finally put in the identities $(\lambda,\mu)=(\lambda,x)$ and
$|\lambda|/|\ell_z|=|(\lambda,y)|$, to derive the theorem.
\end{proof}

\begin{remark}
i) Note that for lattices of signature $(2,n)$ with $n\geq 2$, the
lattice $K$ is Lorentzian, and one gets a non-trivial contribution
from $f^-$ to the theta integral, which is investigated in
\cite{Br}. So the above situation is very special.

ii) At the other cusps of $X_0(N)$, the function
$\Phi_{\Delta,r}(z,f)$ has similar Fourier expansions as in
\eqref{phi:exp}.

iii) The function $\Phi_{\Delta,r }(z,f)$ is a Green function for
the divisor $Z_{\Delta,r}(f)+C_{\Delta,r}(f)$ in the sense of
\cite{BKK}, \cite{BBK}. Here $C_{\Delta,r}(f)$ is a divisor on
$X_0(N)$ supported at the cusps, see also \eqref{reseta}
\end{remark}

\subsection{Canonical differentials of the third kind for
Heegner divisors} \label{sect:candiff}

For the rest of this section, we assume that $f\in
H_{1/2,\tilde\rho_L}$ and that the coefficients $c^+(m,h)$ are
rational for all $m\leq 0$ and $h\in L'/L$. Moreover,  we assume
that the constant term $c^+(0,0)$ of $f$ vanishes when $\Delta=1$,
so that $\Phi_{\Delta,r}(z,f)$ is harmonic. We identify $\Z$ with
$K'$ by mapping $n\in \Z$ to $\frac{n}{2N}\kzxz{1}{0}{0}{-1}$. Then
the Fourier expansion of $\Phi_{\Delta,r}(z,f)$ given in Theorem
\ref{thm:green} becomes
\begin{align}
\label{fouriernice}
\Phi_{\Delta,r}(z,f)&=2\sqrt{\Delta}c^+(0,0)L(1,\chi_{\Delta})+8\pi\rho_{f,\ell}
y\\
\nonumber &\phantom{=}{} -4\sum_{\substack{n\geq 1}}
\sum_{b\;(\Delta)}
\leg{\Delta}{b}c^+(\tfrac{|\Delta|n^2}{4N},\tfrac{rn}{2N})
\log\left|1-e(nz+b/\Delta)\right|.
\end{align}
It follows from
Proposition \ref{prop:sing}
that
\begin{align}
\label{def:eta} \eta_{\Delta,r
}(z,f)&=-\frac{1}{2}\partial\Phi_{\Delta,r }(z,f)
\end{align}
is a differential of the third kind on $X_0(N)$.  It has the residue
divisor
\begin{align}
\label{reseta} \res(\eta_{\Delta,r
}(z,f))=Z_{\Delta,r}(f)+C_{\Delta,r}(f),
\end{align}
where $Z_{\Delta,r}(f)\in \Div(X_0(N))_\Q$, and $C_{\Delta,r}(f)\in
\Div(X_0(N))_\R$ is a divisor on $X_0(N)$ which is supported at the
cusps. (Here we have relaxed the condition that the residues be
integral, and only require them to be real.)
The multiplicity of any cusp $\ell$ in the divisor $C_{\Delta,r}(f)$
is given by the Weyl vector $\rho_{f,\ell}$. According to
Proposition \ref{prop:weylvectorrational}, if $f$ is orthogonal to
the cusp forms in $S_{1/2,\tilde \rho_L}$ then $\rho_{f,\ell}$ is
rational. When $\Delta \neq 1$, all Weyl vectors vanish and
consequently $C_{\Delta,r}(f)=0$.

%

\begin{theorem}
\label{cdfourier}
  The differential $\eta_{\Delta,r }(z,f)$ is the canonical
  differential of the third kind corresponding to
  $Z_{\Delta,r}(f)+C_{\Delta,r}(f)$.  It has the Fourier expansion
\begin{align*}
\eta_{\Delta,r }(z,f)
&= \bigg(\rho_{f,\ell}-
\sgn(\Delta)\sqrt{\Delta}\sum_{\substack{n\geq 1}}\sum_{d\mid n}
\frac{n}{d}\leg{\Delta}{d}
c^+(\tfrac{|\Delta|n^2}{4Nd^2},\tfrac{rn}{2Nd}) e(nz)\bigg)\cdot
2\pi i\,dz.
\end{align*}
\end{theorem}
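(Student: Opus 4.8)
The plan is to prove the two assertions in turn. For the identification with the canonical differential, I would first observe that under the standing assumption that $c^+(0,0)=0$ when $\Delta=1$---together with $\leg{\Delta}{0}=0$ for $\Delta\neq 1$---Proposition \ref{prop:sing} gives $\Omega\Phi_{\Delta,r}(z,f)=0$. Hence $h:=-\tfrac12\Phi_{\Delta,r}(z,f)$ is a $\Gamma_0(N)$-invariant harmonic function on the complement of $\supp(Z_{\Delta,r}(f))$, with logarithmic singularities along $Z_{\Delta,r}(f)$ and, by the expansions at the cusps, along the cuspidal divisor $C_{\Delta,r}(f)$. Since $\eta_{\Delta,r}(z,f)=\partial h$ is already known from \eqref{def:eta} and \eqref{reseta} to be a differential of the third kind with residue divisor $Z_{\Delta,r}(f)+C_{\Delta,r}(f)$, the characterization in Proposition \ref{dtk} immediately identifies it as the canonical differential of the third kind for that divisor.

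To obtain the Fourier expansion I would apply $\partial=dz\,\partial_z$ directly to the three pieces of \eqref{fouriernice}, differentiating term-by-term (justified for $y\gg 0$, where the series converges locally uniformly and, $h$ being harmonic, $\partial h$ is holomorphic and so has a genuine $q$-expansion). The constant $2\sqrt{\Delta}c^+(0,0)L(1,\chi_{\Delta})$ is annihilated by $\partial$. Using $\partial_z y=-\tfrac{i}{2}$, the linear term contributes $\partial(8\pi\rho_{f,\ell}y)=-4\pi i\rho_{f,\ell}\,dz$, which after the factor $-\tfrac12$ gives precisely the Weyl-vector term $\rho_{f,\ell}\cdot 2\pi i\,dz$ in the statement.

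For the logarithmic sum I would set $w=e(nz+b/\Delta)$ with $|w|<1$ and write $\log|1-w|=\tfrac12\log(1-w)+\tfrac12\overline{\log(1-w)}$; only the holomorphic half survives $\partial_z$, yielding $\partial\log|1-w|=-\pi i n\sum_{m\geq 1}e(mnz)e(mb/\Delta)\,dz$. The inner sum over $b\pmod\Delta$ is then evaluated by the Gauss sum \eqref{gauss}, giving $\sum_{b(\Delta)}\leg{\Delta}{b}e(mb/\Delta)=\sgn(\Delta)\sqrt{\Delta}\leg{\Delta}{m}$. Substituting this, collecting all terms with a fixed power $e(\nu z)$ (that is, reindexing the double sum over $(m,n)$ by $\nu=mn$ and the divisor $d=\nu/n$), and multiplying by $-\tfrac12$ transforms the plain double sum into the divisor sum $\sum_{d\mid\nu}\frac{\nu}{d}\leg{\Delta}{d}c^+(\tfrac{|\Delta|\nu^2}{4Nd^2},\tfrac{r\nu}{2Nd})$, which is exactly the claimed coefficient after renaming $\nu$ to $n$.

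I expect the only genuinely delicate point to be the sign bookkeeping in the Gauss sum: one must track the factor $\eps$ (equal to $1$ for $\Delta>0$ and $i$ for $\Delta<0$) together with the distinction between $\sqrt{\Delta}$ and $|\Delta|^{1/2}$, using $\leg{\Delta}{-1}=-1$ and the principal-branch identity $i|\Delta|^{1/2}=\sqrt{\Delta}$ for $\Delta<0$, so that the single factor $\sgn(\Delta)\sqrt{\Delta}$ emerges uniformly for both signs of $\Delta$. The remaining combinatorial step---matching the reindexed double sum to the divisor sum---is routine, and everything else reduces mechanically to \eqref{fouriernice}, Proposition \ref{prop:sing}, and Proposition \ref{dtk}.
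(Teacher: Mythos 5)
Your proposal is correct and follows essentially the same route as the paper: harmonicity of $\Phi_{\Delta,r}(z,f)$ (via Proposition \ref{prop:sing} and the standing assumption $c^+(0,0)=0$ for $\Delta=1$) combined with Proposition \ref{dtk} identifies $\eta_{\Delta,r}(z,f)$ as the canonical differential, and term-by-term differentiation of \eqref{fouriernice} followed by the Gauss sum \eqref{gauss} and reindexing of the double sum yields the stated $q$-expansion. Your sign bookkeeping for the Gauss sum, $\sum_{b\,(\Delta)}\leg{\Delta}{b}e(mb/\Delta)=\sgn(\Delta)\sqrt{\Delta}\leg{\Delta}{m}$, matches what the paper leaves implicit.
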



\begin{proof}
Since $\Phi_{\Delta,r}(z,f)$ is harmonic on $\H\bs Z_{\Delta,r}(f)$,
Proposition \ref{dtk} implies that $\eta_{\Delta,r }(z,f)$ is the
canonical differential of the third kind associated with
$Z_{\Delta,r}(f)+C_{\Delta,r}(f)$. Differentiating
\eqref{fouriernice}, we obtain
\begin{align*}
\eta_{\Delta,r }(z,f)&= \bigg(\rho_{f,\ell} -\sum_{\substack{n\geq
1}}\sum_{d\geq 1} \sum_{b\;(\Delta)}
\leg{\Delta}{b}c^+(\tfrac{|\Delta|n^2}{4N},\tfrac{rn}{2N})n
e(ndz+bd/\Delta)\bigg)\cdot 2\pi i\,dz.
\end{align*}
Inserting the value of the Gauss sum \eqref{gauss} and reordering
the summation, we get the claimed Fourier expansion.
\end{proof}

\begin{theorem}
\label{equivcond} Assume that $\Delta\neq 1$. The following are
equivalent.
\begin{enumerate}
\item[(i)] A non-zero multiple of  $y_{\Delta,r}(f)$ is the divisor of a rational function on $X_0(N)$.
\item[(ii)] The coefficients $c^+(\tfrac{|\Delta|n^2}{4N},\tfrac{rn}{2N})$ of $f$  are algebraic for all positive integers $n$.
\item[(iii)] The coefficients $c^+(\tfrac{|\Delta|n^2}{4N},\tfrac{rn}{2N})$ of $f$ are rational for all positive integers $n$.
\end{enumerate}
\end{theorem}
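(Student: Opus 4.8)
The plan is to identify $\eta_{\Delta,r}(z,f)$ with the canonical differential of the third kind attached to $y_{\Delta,r}(f)$ and then extract everything from Scholl's transcendence criterion in the packaged form of Theorem~\ref{cd2}. Since $\Delta\neq 1$, all Weyl vectors $\rho_{f,\ell}$ vanish, so $C_{\Delta,r}(f)=0$ and $y_{\Delta,r}(f)=Z_{\Delta,r}(f)$; hence by Proposition~\ref{prop:sing} and Theorem~\ref{cdfourier}, $\eta_{\Delta,r}(z,f)$ is the canonical differential of the third kind with residue divisor the degree-zero divisor $D:=y_{\Delta,r}(f)$. Because $c^+(m,h)\in\Q$ for $m\le 0$, the divisor $D$ has rational coefficients and, by Lemma~\ref{properties}(v), is defined over $F:=\Q(\sqrt{\Delta})$, so Theorem~\ref{cd2} applies with this $F$. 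Writing $\eta_{\Delta,r}(z,f)=2\pi i f_\eta\,dz$ with $f_\eta=\sum_{n\ge 0}a(n)q^n$, Theorem~\ref{cdfourier} gives $a(0)=0$ and, after reindexing $m=n/d$,
\[
a(n)=-\sgn(\Delta)\sqrt{\Delta}\,b(n),\qquad b(n):=\sum_{m\mid n} m\leg{\Delta}{n/m}\,\beta(m),\quad \beta(m):=c^+\!\left(\tfrac{|\Delta|m^2}{4N},\tfrac{rm}{2N}\right).
\]

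Next I would record that the passage between the sequences $(\beta(n))$ and $(a(n))$ preserves the relevant fields. The Kronecker symbol $\leg{\Delta}{\cdot}$ is completely multiplicative, so its Dirichlet inverse is $\mu(\cdot)\leg{\Delta}{\cdot}$, and inverting the convolution above yields $n\beta(n)=\sum_{d\mid n}\mu(d)\leg{\Delta}{d}\,b(n/d)$. Since $\sqrt{\Delta}$ is a nonzero algebraic number and both relations have rational-integer coefficients, the assertions ``all $a(n)$ are algebraic'', ``all $b(n)$ are algebraic'', and ``all $\beta(n)$ are algebraic'' are equivalent, and the same equivalence holds verbatim with ``$\in\Q(\sqrt\Delta)$'' in place of ``algebraic'' (dividing by $\sqrt\Delta$ stays inside $\Q(\sqrt\Delta)$). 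This reduces the theorem to statements about the $a(n)$.

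With this bookkeeping in place, two implications are immediate. Clearly (iii)$\Rightarrow$(ii). For (ii)$\Rightarrow$(i): if all $\beta(n)$ are algebraic then all $a(n)$ are algebraic, so the alternative in Theorem~\ref{cd2} that ``some $a(n)$ is transcendental'' is excluded, forcing a non-zero multiple of $D$ to be principal. For (i)$\Rightarrow$(iii): if a non-zero multiple of $D$ is principal, Theorem~\ref{cd2} gives $a(n)\in\Q(\sqrt{\Delta})$ for all $n$, hence $\beta(n)\in\Q(\sqrt{\Delta})$ by the previous paragraph. It remains to upgrade this from $\Q(\sqrt\Delta)$ to $\Q$, and this last descent is where I expect the real work to lie.

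The hard part will be this descent, which I would carry out using the Galois anti-invariance of the twisted divisor. Let $\sigma$ be the non-trivial element of $\Gal(\Q(\sqrt{\Delta})/\Q)$. Since each $Z_{\Delta,r}(m,h)$ is $\Q(\sqrt\Delta)$-rational, the action of $\sigma$ on it is governed by Lemma~\ref{properties}(iii), giving $\sigma\big(Z_{\Delta,r}(m,h)\big)=-Z_{\Delta,r}(m,h)$; as the $c^+(m,h)$ are rational, this yields $\sigma(D)=-D$. Now suppose $M\cdot D=\dv(\phi)$; using the rational cusp $\infty$ one may take $\phi$ defined over $\Q(\sqrt{\Delta})$, and then $\eta_{\Delta,r}(z,f)=\tfrac1M\,d\phi/\phi$, because for a principal divisor the cycle integrals of $d\phi/\phi$ lie in $2\pi i\Z$ and hence have vanishing real part, so $d\phi/\phi$ is already canonical. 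Applying $\sigma$ to divisors gives $\dv(\sigma\phi)=\sigma(MD)=-MD=\dv(1/\phi)$, so $\sigma\phi\cdot\phi$ is a constant in $\Q(\sqrt{\Delta})^\times$; taking logarithmic derivatives on $q$-expansions then gives $\sigma(a(n))=-a(n)$ for all $n$. Thus each $a(n)$ lies in the $(-1)$-eigenspace $\sqrt{\Delta}\cdot\Q$ of $\sigma$, and comparing with $a(n)=-\sgn(\Delta)\sqrt{\Delta}\,b(n)$ shows $b(n)\in\Q$, whence $n\beta(n)\in\Q$ and finally $\beta(n)\in\Q$. This proves (iii) and closes the cycle (i)$\Rightarrow$(iii)$\Rightarrow$(ii)$\Rightarrow$(i). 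The two delicate points in this step are securing a model of $\phi$ over $\Q(\sqrt\Delta)$ and reconciling $\sigma$ (an algebraic operation on $q$-coefficients) with the analytically-defined canonical differential; both are dissolved by the explicit identity $\eta_{\Delta,r}(z,f)=\tfrac1M\,d\phi/\phi$.
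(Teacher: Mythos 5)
Your proposal is correct and follows essentially the same route as the paper: (iii)$\Rightarrow$(ii) is trivial, (ii)$\Rightarrow$(i) comes from Scholl's transcendence criterion applied to the canonical differential $\eta_{\Delta,r}(z,f)$ via its Fourier expansion, and (i)$\Rightarrow$(iii) combines the $q$-expansion principle, M\"obius inversion, and the Galois anti-invariance $\sigma(y_{\Delta,r}(f))=-y_{\Delta,r}(f)$ from Lemma~\ref{properties}. Your only addition is a more explicit justification of the step $\sigma(\eta_{\Delta,r}(z,f))=-\eta_{\Delta,r}(z,f)$ through the identity $\eta_{\Delta,r}(z,f)=\tfrac1M\,d\phi/\phi$, which the paper asserts more briefly; this is a legitimate filling-in of detail rather than a different argument.
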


\begin{proof}
  Statement (iii) trivially implies (ii).
  If (ii) holds, then, in
  view of Theorem~\ref{cdfourier},
the canonical differential
  $\eta_{\Delta,r}(z,f)$ of the divisor
  $y_{\Delta,r}(f)\in \Div(X_0(N))_\Q$ is defined over $\bar\Q$.
  Consequently, Theorem \ref{scholl} implies that a non-zero multiple of
  $y_{\Delta,r}(f)$ is the divisor of a rational
  function on $X_0(N)$. Hence (i) holds.

  It remains to prove that
  (i) implies (iii).
If $y_{\Delta,r}(z,f)$ is a non-zero multiple of the divisor of a
rational function on $X_0(N)$, then Lemma \ref{properties} and
Theorem \ref{scholl} imply that $\eta_{\Delta,r}(z,f)$ is defined
over $F=\Q(\sqrt{\Delta})$, the field of definition of
$y_{\Delta,r}(f)$. Using the $q$-expansion principle and M\"obius
inversion, we deduce from Theorem~\ref{cdfourier}, for every
positive integer $n$, that
\begin{align}
\label{etacoeff} \sqrt{\Delta} n
c^+(\tfrac{|\Delta|n^2}{4N},\tfrac{rn}{2N})\in F.
\end{align}
Denote by $\sigma$ the non-trivial automorphism of $F/\Q$. It
follows from Lemma \ref{properties} that
$\sigma(y_{\Delta,r}(f))=-y_{\Delta,r}(f)$. Hence
$\sigma(\eta_{\Delta,r}(z,f))=-\eta_{\Delta,r}(z,f)$. Using the
action of $\sigma$ on the $q$-expansion of $\eta_{\Delta,r}(z,f)$,
we find that $\sigma$ fixes the coefficients
$c^+(\tfrac{|\Delta|n^2}{4N},\tfrac{rn}{2N})$. Consequently, these
coefficients are rational.
\end{proof}

\begin{remark}
\label{orthcusp1} Theorem \ref{equivcond} also holds for $\Delta=1$
when $S_{1/2,\tilde\rho_L}=0$, or more generally when $f$ is
orthogonal to the cusp forms in $S_{1/2,\tilde\rho_L}$. The latter
conditions ensure that the Weyl vectors corresponding to $f$ are
rational and thereby $C_{\Delta,r}(f)\in \Div((X_0(N))_\Q$. Observe
that $Z_{\Delta,r}(f)+C_{\Delta,r}(f)$ differs from
$y_{\Delta,r}(f)$ only by a divisor of degree $0$ supported at the
cusps.
In particular, by the Manin-Drinfeld theorem, the divisors
$Z_{\Delta,r}(f)+C_{\Delta,r}(f)$ and $y_{\Delta,r}(f)$ define the
same point in $J(\Q)\otimes \R$.\end{remark}

\begin{remark}
Assume that the equivalent conditions of Theorem \ref{equivcond}
hold.

i) It is interesting to consider whether the rational coefficients
$c^+(\tfrac{|\Delta|n^2}{4N},\tfrac{rn}{2N})$ in (iii) have bounded
denominators. This is true if $f$ is weakly holomorphic, since
$M^!_{1/2,\tilde\rho_L}$ has a basis of modular forms with integral
coefficients. However, if $f$ is an honest harmonic weak Maass form,
it is not clear at all.

ii) The rational function in Theorem \ref{equivcond} (i) has an
automorphic product expansion as in Theorem \ref{product}. It is
given by a non-zero power of $\Psi_{\Delta,r}(z,f)$.
\end{remark}

\section{Generalized Borcherds products}
\label{sect:bor}

In this section we consider certain  automorphic products which
arise as liftings of harmonic weak Maass forms and which can be
viewed as generalizations of the automorphic products  in Theorem
13.3 of \cite{Bo1}. In particular, for any Heegner divisor
$Z_{\Delta,r}(m,h)$, we obtain a meromorphic automorphic product
$\Psi$ whose divisor on $X_0(N)$ is the sum of $Z_{\Delta,r}(m,h)$
and a divisor supported at the cusps. But unlike the results in
\cite{Bo1}, the function $\Psi$ will in general transform with a
multiplier system of infinite order under $\Gamma_0(N)$. We then
give a criterion when the multiplier system has finite order.

As usual, for complex numbers $a$ and $b$, we let $a^b=\exp(b
\Log(a))$, where $\Log$ denotes the principal branch of the complex
logarithm. In particular, if $|a|<1$ we have
$(1-a)^b=\exp(-b\sum_{n\geq 1} \frac{a^n}{n})$.


\begin{theorem}
\label{product} Let $f\in H_{1/2,\tilde\rho_L}$ be a harmonic weak
Maass form with real coefficients $c^+(m,h)$ for all $m\in \Q$ and
$h\in L'/L$.
Moreover, assume that $c^+(n,h)\in \Z$ for all $n\leq 0$. The
infinite product
\[
\Psi_{\Delta,r }(z,f)= e((\rho_{f,\ell},z))\prod_{\substack{\lambda\in K'\\
\lambda>0}}\prod_{b\;(\Delta)}\left[1-e((\lambda,z)+b/\Delta)
\right]^{\leg{\Delta}{b}c^+(|\Delta|\lambda^2/2,r \lambda)}
\]
converges for $y$ sufficiently large and has a meromorphic
continuation to all of $\H$ with the following properties.
\begin{enumerate}
\item[(i)] It is a meromorphic modular form for
$\Gamma_0(N)$ with a unitary character $\sigma$ which may have
infinite order.
\item[(ii)] The weight of $\Psi_{\Delta,r }(z,f)$ is $c^+(0,0)$ when $\Delta=1$, and is $0$ when $\Delta\neq 1$.
\item[(iii)]
The divisor of $\Psi_{\Delta,r }(z,f)$ on $X_0(N)$
is given by $Z_{\Delta,r}(f)+C_{\Delta,r}(f)$.
\item[(iv)]
We have
\[
\Phi_{\Delta,r }(z,f)=\begin{cases} -c^+(0,0)(\log(4\pi
N)+\Gamma'(1))-4
\log|\Psi_{\Delta,r }(z,f)y^{c^+(0,0)/2}|,& \text{if $\Delta=1$,}\\[.5ex]
2\sqrt{\Delta}c(0,0)L(1,\chi_{\Delta})-4 \log|\Psi_{\Delta,r
}(z,f)|,& \text{if $\Delta\neq 1$.}
\end{cases}
\]
\end{enumerate}
\end{theorem}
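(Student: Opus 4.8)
The plan is to derive every assertion of Theorem~\ref{product} from the Fourier expansion of the Green function established in Theorem~\ref{thm:green}, together with the fact (Theorem~\ref{cdfourier}) that $\eta_{\Delta,r}(z,f)=-\frac12\partial\Phi_{\Delta,r}(z,f)$ is a \emph{globally defined} canonical differential of the third kind on $X_0(N)$ with residue divisor $Z_{\Delta,r}(f)+C_{\Delta,r}(f)$ by \eqref{reseta}. First I would take $\log|\cdot|$ of the defining product: for $y$ large the Weyl factor $e((\rho_{f,\ell},z))$ contributes $-2\pi(\rho_{f,\ell},y)$, while the double product contributes $\sum_{\lambda>0}\sum_{b\,(\Delta)}\leg{\Delta}{b}c^+(|\Delta|\lambda^2/2,r\lambda)\log|1-e((\lambda,z)+b/\Delta)|$. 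Comparing this termwise with \eqref{phi:exp} shows that $-4\log|\Psi_{\Delta,r}(z,f)|$ equals $\Phi_{\Delta,r}(z,f)$ up to the stated constant when $\Delta\neq1$, and equals it up to the stated constant once the factor $y^{c^+(0,0)/2}$ and the term $c^+(0,0)\log(y^2)=\log(y^2)\cdot c^+(0,0)$ are absorbed when $\Delta=1$; this is exactly assertion~(iv). The same comparison proves convergence of the product for $y\gg0$, since the right-hand side converges there by Theorem~\ref{thm:green}: the coefficients $c^+(m,h)$ grow subexponentially in $m$ (like $e^{C\sqrt m}$), which is dominated by the decay $|e((\lambda,z))|=e^{-2\pi(\lambda,y)}$ once $y$ is large, making the product absolutely convergent.

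For the meromorphic continuation I would pass from $|\Psi_{\Delta,r}(z,f)|$ to $\Psi_{\Delta,r}(z,f)$ via its logarithmic derivative. For a holomorphic function one has $\frac{d\Psi}{\Psi}=2\partial\log|\Psi|$, so assertion~(iv) gives $\frac{d\Psi}{\Psi}=-\frac12\partial\Phi_{\Delta,r}(z,f)=\eta_{\Delta,r}(z,f)$ when $\Delta\neq1$, and the same identity with an explicit extra term $\tfrac{i\,c^+(0,0)}{2y}\,dz$ accounting for the weight when $\Delta=1$. By Theorem~\ref{cdfourier} the right-hand side is a global meromorphic differential on $X_0(N)$ whose residue divisor is $Z_{\Delta,r}(f)+C_{\Delta,r}(f)$. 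Consequently $\Psi_{\Delta,r}(z,f)=\exp\!\big(\int^{z}\eta_{\Delta,r}(\cdot,f)\big)$ continues to a (possibly multivalued) meromorphic function on all of $\H$, whose order at each point of $X_0(N)$ (interpreted in the orbifold sense at elliptic and Heegner points, where the weights $1/w(\lambda)$ appear) equals the corresponding residue. This yields the meromorphic continuation and assertion~(iii).

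To obtain the transformation behaviour (i) and (ii), I would use that $\Phi_{\Delta,r}(z,f)$ is $\Gamma_0(N)$-invariant by Proposition~\ref{prop:sing}. Combined with~(iv) and $\Im(\gamma z)=y/|cz+d|^2$, this forces $|\Psi_{\Delta,r}(\gamma z,f)|=|cz+d|^{w}\,|\Psi_{\Delta,r}(z,f)|$ with $w=c^+(0,0)$ for $\Delta=1$ and $w=0$ for $\Delta\neq1$, which is the weight in~(ii). Hence for each $\gamma=\kabcd\in\Gamma_0(N)$ the quotient $\sigma(\gamma):=\Psi_{\Delta,r}(\gamma z,f)(cz+d)^{-w}/\Psi_{\Delta,r}(z,f)$ is a meromorphic function of $z$ with constant absolute value $1$, hence a constant of modulus $1$; this defines the unitary character in~(i). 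That $\sigma$ may have \emph{infinite} order I would explain structurally: $\sigma(\gamma)=\exp\big(\int_{\gamma}\eta_{\Delta,r}(\cdot,f)\big)$, and because $\eta_{\Delta,r}(z,f)$ is the canonical differential of the third kind its periods are purely imaginary (so indeed $|\sigma(\gamma)|=1$), while the imaginary parts of these periods are controlled by the real exponents $c^+(|\Delta|\lambda^2/2,r\lambda)$ and need not lie in $2\pi\Z$.

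I expect the genuine work to lie in two places. The first is the honest justification of convergence and of the termwise comparison, which requires unfolding and regularizing exactly as in Theorem~7.1 and \S6 of~\cite{Bo1} and Theorem~2.15 of~\cite{Br}; here the only new features are the genus character $\chi_{\Delta}$ and the Gauss sum \eqref{gauss}, both already packaged into Theorem~\ref{thm:green}. The second, and the main conceptual obstacle, is the infinite-order character: unlike the original Borcherds products, the exponents $c^+(|\Delta|\lambda^2/2,r\lambda)$ come from an honest harmonic weak Maass form rather than a weakly holomorphic form, so they are merely real and the periods of $\eta_{\Delta,r}$ need not be integral multiples of $2\pi i$. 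What rescues modularity is precisely that the \emph{canonical} normalization of the differential of the third kind makes these periods purely imaginary, guaranteeing that $\sigma$ is unitary; identifying $\frac{d\Psi}{\Psi}$ with this canonical differential (rather than with some arbitrary differential of the third kind sharing its residues) is therefore the crucial step, and it is exactly what Proposition~\ref{dtk} and the characterization of $\eta_{\Delta,r}$ via $\Phi_{\Delta,r}$ provide.
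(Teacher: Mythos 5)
Your overall route is the one the paper takes (the paper simply defers the continuation to the argument of Section 13 of \cite{Bo1} and Chapter 3 of \cite{Br}, and then reads off (i), (ii), (iv) from the $\Gamma_0(N)$-invariance of $\Phi_{\Delta,r}(z,f)$ exactly as you do), and your comparison of $-4\log|\Psi_{\Delta,r}(z,f)|$ with the expansion \eqref{phi:exp}, your convergence estimate, and your derivation of the unitary character are all sound. But there is one genuine gap in the continuation step, and it sits precisely where the hypothesis $c^+(n,h)\in\Z$ for $n\leq 0$ must be used --- a hypothesis your argument never invokes. You define the continuation as $\exp\bigl(\int^z\eta_{\Delta,r}(\cdot,f)\bigr)$ and concede that this is ``possibly multivalued'' on $\H$; a multivalued function is not a meromorphic modular form with a character, so this does not yet yield the theorem. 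The domain $\H\setminus\supp Z_{\Delta,r}(f)$ is not simply connected, and the monodromy of $\exp\bigl(\int\eta_{\Delta,r}\bigr)$ around a Heegner point $Z(\lambda)$ is $e^{2\pi i\,c}$, where $c$ is the local residue computed on $\H$ (not on the orbifold $X_0(N)$, where the extra $1/w(\lambda)$ appears). That residue is an integer multiple of $\chi_\Delta(\lambda)\,c^+(m,h)$, so it is the integrality of the principal part that kills the interior monodromy and makes $\Psi_{\Delta,r}(z,f)$ single-valued on $\H$, leaving only the monodromy across $\Gamma_0(N)$, which is the character $\sigma$. You must make this step explicit; as written, the proof would apply verbatim to a form with merely rational (or real) principal part, for which the conclusion is false.

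Two smaller points. First, Theorem \ref{cdfourier} is proved under the standing assumptions of Section \ref{sect:candiff}, which include $c^+(0,0)=0$ when $\Delta=1$ (so that $\Phi_{\Delta,r}(z,f)$ is harmonic); for the general $\Delta=1$ case of Theorem \ref{product} the form $-\tfrac12\partial\Phi_{\Delta,r}(z,f)$ is not a meromorphic differential, and your correction term $\tfrac{i\,c^+(0,0)}{2y}\,dz$, which you mention only in passing, has to be carried through the entire monodromy and modularity analysis (it is exactly what produces the automorphy factor $(cz+d)^{c^+(0,0)}$). Second, your structural explanation of unitarity via the purely imaginary periods of the canonical differential is correct and is a nice alternative to the paper's observation that $|\Psi_{\Delta,r}(\gamma z,f)|\,\Im(\gamma z)^{c^+(0,0)/2}$ is $\Gamma_0(N)$-invariant by (iv); either suffices, but the latter is the one-line argument the paper intends.
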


\begin{proof}
By means of the same argument as in Section 13 of \cite{Bo1}, or
Chapter 3 of \cite{Br}, it can be deduced from Proposition
\ref{prop:sing}, Remark \ref{rem:realan}, and Theorem
\ref{thm:green} that $\Psi_{\Delta,r }(z,f)$ has a continuation to a
meromorphic function on $\H$ satisfying (iii) and (iv). Moreover,
using the $\Gamma_0(N)$-invariance of $\Phi_{\Delta,r }(z,f)$ one
finds that it satisfies the transformation law
\[
\Psi_{\Delta,r }(\gamma z,f)=\sigma(\gamma)(cz+d)^{c^+(0,0)}
\Psi_{\Delta,r }(z,f),
\]
for $\gamma=\kabcd\in \Gamma_0(N)$, where $\sigma :\Gamma_0(N)\to
\C^\times$ is a unitary  character of $\Gamma_0(N)$.
\end{proof}

\begin{theorem}
\label{equivcond-borcherds} Suppose that $\Delta\neq 1$. Let $f\in
H_{1/2,\tilde\rho_L}$ be a harmonic weak Maass form with real
coefficients $c^+(m,h)$ for all $m\in \Q$ and  $h\in L'/L$.
Moreover, assume that $c^+(n,h)\in \Z$ for all $n\leq 0$. The
following are equivalent.
\begin{enumerate}
\item[(i)]  The character $\sigma$ of the function $\Psi_{\Delta,r }(z,f)$ defined in Theorem \ref{product} is of finite order.
\item[(ii)] The coefficients $c^+(|\Delta|\lambda^2/2,r \lambda)$ are rational for all $\lambda \in K'$.
\end{enumerate}
\end{theorem}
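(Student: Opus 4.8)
The plan is to reduce both implications to Theorem~\ref{equivcond} by proving that the character $\sigma$ has finite order if and only if a non-zero multiple of the degree-zero divisor $y_{\Delta,r}(f)$ is principal on $X_0(N)$. Throughout I use that, since $\Delta\neq 1$, the function $\Psi:=\Psi_{\Delta,r}(z,f)$ is a meromorphic modular form of weight $0$ for $\Gamma_0(N)$ with unitary character $\sigma$ (Theorem~\ref{product}), that all Weyl vectors vanish so that $C_{\Delta,r}(f)=0$, and that $y_{\Delta,r}(f)=Z_{\Delta,r}(f)$ is exactly the divisor of $\Psi$ on $X_0(N)$, supported away from the cusps. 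I also note that the integrality hypothesis $c^+(n,h)\in\Z$ for $n\le 0$ supplies the rationality hypothesis needed to apply Theorem~\ref{equivcond}. Finally, under the identification $\Z\cong K'$, $n\mapsto \frac{n}{2N}\kzxz{1}{0}{0}{-1}$, one computes $|\Delta|\lambda^2/2=\tfrac{|\Delta|n^2}{4N}$ and $r\lambda=\tfrac{rn}{2N}$, so that condition~(ii) of the present theorem is, after using the symmetry $c^+(m,h)=\pm c^+(m,-h)$ to pass between $\pm n$, literally condition~(iii) of Theorem~\ref{equivcond}.

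For the implication (i)$\Rightarrow$(ii), I would suppose $\sigma$ has finite order $M$. Then $\Psi^M$ is $\Gamma_0(N)$-invariant of weight $0$ and meromorphic at the cusps, hence a rational function on $X_0(N)$; its divisor is $M\cdot y_{\Delta,r}(f)$, so a non-zero multiple of $y_{\Delta,r}(f)$ is principal. Theorem~\ref{equivcond} then gives the rationality of the coefficients $c^+(\tfrac{|\Delta|n^2}{4N},\tfrac{rn}{2N})$ for all positive $n$, which by the identification above is precisely~(ii).

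For the converse (ii)$\Rightarrow$(i), Theorem~\ref{equivcond} provides a positive integer $M$ and a rational function $g$ on $X_0(N)$ with $\dv(g)=M\cdot y_{\Delta,r}(f)=\dv(\Psi^M)$. The quotient $h:=\Psi^M/g$ then has trivial divisor on all of $X_0(N)$; in particular it is holomorphic and non-vanishing on $\H$ and at every cusp, and it transforms under $\Gamma_0(N)$ with the unitary character $\sigma^M$ in weight $0$. Since $\sigma^M$ is unitary, $\lvert h\rvert$ is $\Gamma_0(N)$-invariant and descends to a function on the compact curve $X_0(N)$, so $\log\lvert h\rvert$ is a globally defined harmonic function and hence constant; as $h$ is holomorphic with constant modulus, it is itself constant by the open mapping theorem. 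Therefore $\Psi^M$ is a constant multiple of $g$, which forces $\sigma^M=1$, so $\sigma$ has finite order.

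The main obstacle is this converse direction: Theorem~\ref{equivcond} only returns a principal divisor, and one must upgrade this to the statement that the multiplier $\sigma$ itself is a root of unity. This is where the rigidity step --- a holomorphic, nowhere-vanishing automorphic function of weight $0$ with unitary multiplier on the compact curve must be constant --- is essential, and it relies on the careful bookkeeping that the divisors of $\Psi^M$ and $g$ cancel exactly, including at the cusps, which in turn uses $C_{\Delta,r}(f)=0$ for $\Delta\neq 1$.
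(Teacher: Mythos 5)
Your proposal is correct and follows essentially the same route as the paper: both directions are reduced to Theorem~\ref{equivcond}, with the converse handled by comparing $\Psi^M$ to the rational function with divisor $M\cdot y_{\Delta,r}(f)$ and using harmonicity of the logarithm of the modulus of the quotient plus the maximum principle to conclude the quotient is constant. Your added bookkeeping about $C_{\Delta,r}(f)=0$ and the matching of coefficient indices is exactly what the paper leaves implicit.
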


\begin{proof}
If (i) holds, then there is a positive integer $M$ such that
$\Psi_{\Delta,r }(z,f)^M$ is a rational function on $X_0(N)$ with
divisor $M \cdot Z_{\Delta,r}(f)$. By means of Theorem
\ref{equivcond} we find that $c^+(|\Delta|\lambda^2/2,r \lambda)\in
\Q$ for all $\lambda \in K'$. Conversely, if (ii) holds, using
Theorem \ref{equivcond} we may conclude that
 $ M \cdot Z_{\Delta,r}(f)$ is the divisor of
a rational function $R$ on $X_0(N)$ for some positive integer $M$.
But this implies that
\[
\log|R|- M\log|\Psi_{\Delta,r }(z,f)|
\]
is a harmonic function on $X_0(N)$ (without any singularities). By
the maximum principle, it is constant. Hence $R/\Psi_{\Delta,r
}(z,f)^M$ is a holomorphic function on $\H$ with constant modulus,
which must be constant. Consequently, $\sigma^M$ is the trivial
character.
%
\end{proof}

\begin{remark}
\label{orthcusp2} Theorem \ref{equivcond-borcherds} also holds for
$\Delta=1$ when $S_{1/2,\tilde\rho_L}=0$, or more generally when $f$
is orthogonal to the cusp forms in $S_{1/2,\tilde\rho_L}$. The
latter conditions ensure that  Theorem~\ref{equivcond} still applies
(see Remark \ref{orthcusp1}). Notice that we may reduce to the case
that the constant term $c^+(0,0)$ of $f$ vanishes by adding a
suitable rational linear combination of $\Orth(K'/K)$-translates of
the vector valued weight $1/2$ theta series for the lattice $K$.
\end{remark}

The rationality of the coefficients $c^+(m,h)$ of a harmonic weak
Maass form is usually not easy to verify. In view of Theorem
\ref{equivcond-borcherds}, it is related to the vanishing of twisted
Heegner divisors in the Jacobian, which is a deep question (see
\cite{GZ}). However for special harmonic weak Maass forms, such as
the mock theta functions, one can read off the rationality directly
from the construction. This leads to explicit relations among
certain Heegner divisors on $X_0(N)$ comparable to the relations
among cuspidal divisors coming from modular units.

For weakly holomorphic modular forms $f\in M^!_{1/2,\tilde\rho_L}$ the
rationality of the Fourier coefficients is essentially dictated by
the principal part (with minor complications caused by the presence
of cusp forms of weight $1/2$). More precisely, one can use the
following lemmas.

\begin{lemma}
\label{rationalcoeff} Suppose that $f\in M^!_{1/2,\tilde\rho_L}$. If
$c^+(m,h)\in \Q $ for all $m\leq 0$, then there exists a cusp form
$f'\in S_{1/2,\tilde\rho_L}$ such that $f+f'$ has rational
coefficients.
\end{lemma}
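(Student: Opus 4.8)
The plan is to reduce the lemma to a rationality fact about the ambient space of weakly holomorphic forms together with a short descent-of-images argument. First I would invoke the existence of a $\Q$-structure on $M^!_{1/2,\tilde\rho_L}$: as recorded in the remark following Theorem~\ref{equivcond} (and as is standard for vector valued modular forms attached to the Weil representation, cf.~\cite{Br}), this space has a basis consisting of forms all of whose Fourier coefficients are rational. Write $M_\Q$ for the $\Q$-vector space of such rational forms, so that $M^!_{1/2,\tilde\rho_L}=M_\Q\otimes_\Q\C$ and each coefficient functional $c^+(n,h)$ restricts to a $\Q$-valued functional on $M_\Q$.

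Next I would fix a bound $T$ on the pole order, large enough that the principal part of $f$ is supported in $-T\le n\le 0$, and pass to the finite-dimensional subspace $M_{\le T}\subseteq M^!_{1/2,\tilde\rho_L}$ of forms whose principal parts are supported there. Since $M_{\le T}$ is cut out inside $M^!_{1/2,\tilde\rho_L}$ by the vanishing of the rational functionals $c^+(n,h)$ for $n<-T$, it is defined over $\Q$, i.e.\ $M_{\le T}=(M_{\le T}\cap M_\Q)\otimes_\Q\C$. Let $P_T$ be the space of Fourier polynomials $\sum_{h}\sum_{-T\le n\le 0} a(n,h)\,e(n\tau)\frake_h$, with its evident $\Q$-structure $P_T(\Q)$, and let $\pi\colon M_{\le T}\to P_T$ be the principal part map. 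This map is $\C$-linear, and because it sends $M_{\le T}\cap M_\Q$ into $P_T(\Q)$ it is defined over $\Q$.

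The remaining step is the formal observation that for a $\Q$-rational linear map the rational points of the image coincide with the images of rational points: the image $\pi(M_{\le T})$ is the complexification of the $\Q$-subspace $\pi(M_{\le T}\cap M_\Q)\subseteq P_T(\Q)$, whence $\pi(M_{\le T})\cap P_T(\Q)=\pi(M_{\le T}\cap M_\Q)$. By hypothesis $P_f=\pi(f)$ lies in $P_T(\Q)$, so there is a rational form $\tilde f\in M_{\le T}\cap M_\Q$ with $\pi(\tilde f)=P_f$. Then $f':=\tilde f-f$ has vanishing principal part; since modular forms for $\Mp_2(\Z)$ have a single cusp, a weakly holomorphic form with no principal part at $\infty$ is holomorphic and vanishes there, so $f'\in S_{1/2,\tilde\rho_L}$. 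As $f+f'=\tilde f$ has rational coefficients, the lemma follows.

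The one genuinely nontrivial input is the first step, the existence of the rational structure on $M^!_{1/2,\tilde\rho_L}$; this is precisely the fact used implicitly in the remarks preceding the lemma, and it is where all the arithmetic content sits. Everything afterward is formal: the principal part map is $\Q$-rational essentially by inspection, and the passage from a rational element of the image to a rational preimage is standard. The only point demanding a little care is ensuring one works in a finite-dimensional space (hence the pole-order truncation) and that the difference $f-\tilde f$ is a genuine cusp form, which is immediate from the single-cusp property of the metaplectic group.
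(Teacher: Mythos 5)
Your proof is correct and rests on exactly the same key input as the paper's, namely the existence of a basis of $M^!_{1/2,\tilde\rho_L}$ (and hence of $S_{1/2,\tilde\rho_L}$) consisting of forms with rational Fourier coefficients, as in \cite{McG}; the paper's proof is a one-line citation of this fact, and your truncation-plus-descent argument simply makes explicit the formal linear algebra (a $\Q$-rational point in the image of a $\Q$-rational linear map has a $\Q$-rational preimage) that the authors leave to the reader.
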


\begin{proof}
This follows from the fact that the spaces $M^!_{1/2,\tilde\rho_L}$
and $S_{1/2,\tilde\rho_L}$ have bases of modular forms with rational
coefficients (see \cite{McG}).
\end{proof}


\begin{remark}
Observe that $M_{1/2,\bar\rho_L}$ is always trivial, by a result of
Skoruppa (see Theorem 5.7 of \cite{EZ}). However, $M_{1/2,\rho_L}$
may contain non-zero elements (which are linear combinations of
theta series of weight $1/2$).
\end{remark}

\begin{lemma}
\label{weakrat} If $f\in S_{1/2,\rho_L}$, then the coefficient
$c^+(m,h)$ of $f$ vanishes unless $m=\lambda^2/2$ for some
$\lambda\in K'$.
\end{lemma}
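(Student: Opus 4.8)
The plan is to deduce the statement from the classification of holomorphic modular forms of weight $1/2$, i.e.\ from the vector-valued Serre--Stark theorem already alluded to in the remark preceding the lemma. Since $S_{1/2,\rho_L}\subseteq M_{1/2,\rho_L}$, it suffices to prove the assertion for an arbitrary $f\in M_{1/2,\rho_L}$. First I would invoke the Serre--Stark classification (in the form of Skoruppa's result, see \cite{EZ}, and the remark above) to write every such $f$ as a linear combination of unary theta series attached to the positive definite rank one lattice $K$ of Section~\ref{partpoisson}, taking values in $\C[K'/K]\cong\C[L'/L]$. The essential point is that weight $1/2$ is low enough that no genuinely new forms beyond theta series can occur.

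Next I would make the shape of these theta series explicit and simply read off the exponents. The basic example is the function $\Xi(\tau,0,0,0)=\sum_{\lambda\in K'}e(Q(\lambda)\tau)\frake_\lambda$ identified in Section~\ref{sect:2} as the holomorphic theta function of the one-dimensional positive definite lattice $K$; the general weight $1/2$ theta series for $K$ differs from this only by twisting the sum over $\lambda\in K'$ by an even periodic coefficient and by shifting the coset, neither of which enlarges the set of exponents occurring. In each case the $q$-exponent of the term indexed by $\lambda$ is $Q(\lambda)=\lambda^2/2$, so the Fourier expansion of every such theta series, and hence of $f$, is supported on $\{\lambda^2/2:\lambda\in K'\}$. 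Concretely, with $K=\Z\kzxz{1}{0}{0}{-1}$ one has $K'=\tfrac{1}{2N}\Z\kzxz{1}{0}{0}{-1}$ and $Q(\tfrac{n}{2N}\kzxz{1}{0}{0}{-1})=n^2/4N$, so the admissible exponents are exactly $m=n^2/4N$ with $h\equiv n\pmod{2N}$, matching the indexing used elsewhere in the paper. Thus if $c^+(m,h)\neq0$ for some $f\in S_{1/2,\rho_L}$, then $m=\lambda^2/2$ for some $\lambda\in K'$, which is the claim.

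The only substantive input is the weight $1/2$ classification theorem; once it is granted the argument is pure bookkeeping about the exponents of unary theta series. Accordingly, the one point that requires care is matching the abstract theta series produced by Skoruppa's theorem to the concrete lattice $K$ with its Weil representation $\rho_L$, and verifying that the allowed character twists and coset shifts cannot introduce exponents outside $\{\lambda^2/2\}$. I do not anticipate a real obstacle beyond this routine verification, and in particular the fact that $f$ is assumed cuspidal plays no role: cuspidal combinations of theta series (those built from nontrivial even characters) still have support on $\{\lambda^2/2\}$.
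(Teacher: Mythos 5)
Your proof is correct in outline, but it takes precisely the route that the paper names and then deliberately declines: the authors open their proof with ``This could be proved using the Serre--Stark basis theorem. Here we give a more indirect proof using the twisted theta lifts of the previous section.'' Their actual argument is this: for any fundamental discriminant $\Delta\neq 1$ with $\Delta\equiv r^2\pmod{4N}$, a cusp form has trivial principal part, so the twisted Heegner divisor $Z_{\Delta,r}(f)$ vanishes; hence the canonical differential $\eta_{\Delta,r}(z,f)$ is identically zero, and reading off its Fourier expansion from Theorem \ref{cdfourier} (plus M\"obius inversion) gives $c^+(|\Delta|n^2/4N,\,rn/2N)=0$ for all $n\geq 1$. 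Since every index $m$ in the support of $f$ with $4Nm$ not a perfect square can be written as $|\Delta|n^2/4N$ for some fundamental $\Delta\neq1$, letting $\Delta$ range over all such discriminants kills exactly the coefficients the lemma claims vanish. Comparing the two: your approach is conceptually direct and needs none of the Sections \ref{sect:lift}--\ref{sect:candiff} machinery, but it rests entirely on the precise vector-valued form of the Serre--Stark classification for $\rho_L$. The step you call a ``routine verification'' --- that the admissible theta series are only the $\Orth(L'/L)$-translates of $\theta_K$ and that no unary theta series attached to a rescaled lattice or shifted multiplier can occur --- is in fact the entire content of the lemma in disguise: a hypothetical theta series with exponents $dn^2/4N$ for a non-square $d$ would produce coefficients outside $\{\lambda^2/2:\lambda\in K'\}$, so this is exactly where the proof could fail and it deserves a genuine citation or argument rather than a wave. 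The paper's route buys independence from that classification at the cost of using the heavier lift machinery (which is already built by that point, so the cost is nil in context); your route buys a shorter, classification-based proof at the cost of making the lemma essentially equivalent to the theorem you are citing. Note also that in the paper's proof cuspidality is used essentially (to get $Z_{\Delta,r}(f)=0$), whereas in yours it plays no role --- which is fine, since you prove the stronger statement for all of $M_{1/2,\rho_L}$, but it is worth being aware that the two proofs use the hypothesis very differently.
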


\begin{proof}
This could be proved using the Serre-Stark basis theorem. Here we
give a more indirect proof using the twisted theta lifts of the
previous section.

Let  $\Delta\neq 1$ be a fundamental discriminant and let $r$ be an
integer satisfying $\Delta\equiv r^2\pmod{4N}$. We consider the
canonical differential of the third kind $\eta_{\Delta,r}(z,f)$.
Since $f$ is a cusp form and $\Delta\neq 1$, the divisor
$Z_{\Delta,r}(f)$ vanishes. Consequently,
$\eta_{\Delta,r}(z,f)\equiv 0$. By Theorem \ref{cdfourier}, we find
that $c^+(|\Delta|\lambda^2/2,r\lambda)=0$ for all $\lambda\in K'$.
This proves the lemma.
\end{proof}

\begin{lemma}
\label{rationalcoeff2} If $f\in M^!_{1/2,\tilde\rho_L}$, then the
following are true:
\begin{enumerate}
\item[(i)]
If $c^+(m,h)=0$ for all $m<1$, then $f$ vanishes identically.
\item[(ii)] If
$c^+(m,h)\in \Q$ for all $m<1$, then all coefficients of $f$ are
contained in $\Q$.
\end{enumerate}
\end{lemma}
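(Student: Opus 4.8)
The plan is to prove (i) directly and then deduce (ii) from it by Galois descent. For (i), the first observation is that the hypothesis $c^+(m,h)=0$ for all $m<1$ in particular annihilates the principal part, so $f$ is holomorphic and lies in $M_{1/2,\tilde\rho_L}$. If $\Delta<0$, then $\tilde\rho_L=\bar\rho_L$ and $M_{1/2,\bar\rho_L}=0$ by Skoruppa's theorem (Theorem 5.7 of \cite{EZ}), so $f=0$ with nothing further to check. The substantive case is $\Delta>0$, where $\tilde\rho_L=\rho_L$ and the space can be nonzero. Here I would invoke the Serre--Stark basis theorem: every weight $1/2$ form is a linear combination of unary theta series, so the coefficients $c^+(m,h)$ are supported on exponents $m=Q(\lambda)$ with $\lambda\in K'$. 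Passing to the associated scalar form $\tilde f=\sum_{D\ge 0}a(D)q^D$ in Kohnen's plus space on $\Gamma_0(4N)$ (under which the condition $m<1$ becomes $D=4Nm<4N$), we may write $\tilde f=\sum_{\psi,t}c_{\psi,t}\,\theta_{\psi,t}$, where $\theta_{\psi,t}(\tau)=\sum_n\psi(n)q^{tn^2}$, $\psi$ is an even primitive character modulo some $r$, and $tr^2\mid N$.

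The main obstacle is then the following detection statement: such a $\tilde f$ with $a(D)=0$ for all $D<4N$ must vanish. The key point is that, because $tr^2\mid N$, all of the coefficients $a(tn^2)=2\psi(n)$ (with $\gcd(n,r)=1$ and $1\le n<r$), together with the constant term, which pin down the individual $\theta_{\psi,t}$, occur at indices $tn^2<tr^2\le N<4N$. Hence the linear independence of the Serre--Stark basis is already witnessed inside the range $D<4N$, so the map sending a form to its coefficients of index $D<4N$ is injective on $M_{1/2,\rho_L}$, and the hypothesis forces every $c_{\psi,t}=0$ and thus $f=0$. The delicate part is to organize the separation of the $\theta_{\psi,t}$ (first ordering by $t$, then separating characters of equal $t$) while checking that theta series of larger $t$ cannot conspire to cancel these low-index coefficients; in other words, that Serre--Stark independence genuinely localizes to $D<4N$. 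This is where essentially all the work sits, and it is the step I expect to require the most care.

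Granting (i), part (ii) follows by Galois descent. By \cite{McG} the space $M^!_{1/2,\tilde\rho_L}$ has a basis $f_1,\dots,f_s$ of forms with rational Fourier coefficients, so $\Aut(\C/\Q)$ acts on it by acting on coordinates: writing $f=\sum_i a_i f_i$ with $a_i\in\C$, the form $f^\sigma:=\sum_i\sigma(a_i)f_i$ again lies in $M^!_{1/2,\tilde\rho_L}$, and since the $c^+(m,h)(f_i)$ are rational its $(m,h)$-coefficient is exactly $\sigma\bigl(c^+(m,h)(f)\bigr)$. If $c^+(m,h)\in\Q$ for all $m<1$, then $f^\sigma-f$ has vanishing coefficients for every $m<1$, whence $f^\sigma=f$ by part (i). As this holds for all $\sigma\in\Aut(\C/\Q)$, every coefficient of $f$ is fixed by $\Aut(\C/\Q)$ and therefore rational.
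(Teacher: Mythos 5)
Your part (ii) and your handling of the case $\tilde\rho_L=\bar\rho_L$ (where $M_{1/2,\bar\rho_L}=0$ by Skoruppa's theorem) are fine, and the Galois-descent deduction of (ii) from (i) via McGraw's rational basis is exactly what the paper does. The problem is part (i) in the case $\tilde\rho_L=\rho_L$: your argument rests entirely on the claim that the Serre--Stark linear independence of the unary theta series $\theta_{\psi,t}$ is already witnessed by the coefficients of index $D<4N$, and you explicitly defer this step (``this is where essentially all the work sits''). As written this is a genuine gap, not a routine verification: the exponent sets $\{tn^2\}$ and $\{t'n'^2\}$ overlap whenever $t'/t$ is a square (for instance $\theta_{1,1}$ and $\theta_{1,4}$ both contribute to the coefficient at $D=4$), so isolating an individual $c_{\psi,t}$ from the coefficients with $D<4N$ requires an induction over the square classes of $t$ combined with orthogonality of characters; moreover your parenthetical that the values $a(tn^2)=2\psi(n)$ for $1\le n<r$ ``pin down'' $\theta_{\psi,t}$ is vacuous for trivial $\psi$ (where $r=1$), which is precisely the case in which distinct $t$ in the same square class interfere. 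A proof that leaves this step open has not proved the lemma.

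The paper avoids all of this with a two-line argument worth comparing against: the hypothesis $c^+(m,h)=0$ for all $m<1$ says that the weakly holomorphic form $f$ vanishes to order at least $1$ at the unique cusp of $\Mp_2(\Z)$, so dividing by the weight $12$ discriminant function $\Delta(\tau)=q\prod_{n\ge 1}(1-q^n)^{24}$, which is nonvanishing on $\H$ and has a simple zero at $\infty$, produces a holomorphic modular form of weight $-23/2$ and type $\tilde\rho_L$; such a form vanishes identically, hence $f=0$. This requires neither the reduction to the holomorphic subspace nor any classification of weight $1/2$ forms. Indeed, the cleanest way to establish the localization statement your approach needs would be to run this very division argument, at which point the Serre--Stark detour becomes superfluous.
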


\begin{proof}
(i) The hypothesis implies that $f/\Delta$ is a holomorphic modular
form of weight $-23/2$ with representation $\tilde\rho_L$. Hence it
has to vanish identically. (ii) The assertion follows from (i) using
the Galois action on $ M^!_{1/2,\tilde\rho_L}$.
\end{proof}

\begin{remark}
%
i) If $f\in M^!_{1/2,\tilde\rho_L}$ is a weakly holomorphic modular
form with rational coefficients $c^+(m,h)$ and $c^+(m,h)\in \Z$ for
$m\leq 0$, then Theorem \ref{equivcond-borcherds} and Remark
\ref{orthcusp2} show that the automorphic product
$\Psi_{\Delta,r}(z,f)$ of Theorem \ref{product} is a meromorphic
modular form for $\Gamma_0(N)$ with a character of finite order.
When $\Delta=1$, this result is contained in Theorem 13.3 of
\cite{Bo1}. Borcherds proved the finiteness of the multiplier system
in \cite{Bo3} using the embedding trick. (However, the embedding
trick argument does not work for harmonic weak Maass forms.) In the
special case that $N=1$ and $\Delta>0$, twisted Borcherds products
were first constructed by Zagier in a different way (see \S7 of
\cite{Za2}).

ii) The space of weakly holomorphic modular forms of weight $1/2$
with representation $\rho_L$ is isomorphic to the space of weakly
skew holomorphic Jacobi forms in the sense of \cite{Sk1}. For these
forms, Theorem \ref{product} gives the automorphic products
$\Psi_{\Delta,r}$ for any {\em positive} fundamental discriminant
$\Delta$. The space of weakly holomorphic modular forms of weight
$1/2$ with representation $\bar\rho_L$ is isomorphic to the space of
``classical'' weakly holomorphic Jacobi forms in the sense of
\cite{EZ}. For these forms, the theorem gives the automorphic
products $\Psi_{\Delta,r}$ for any {\em negative} fundamental
discriminant $\Delta$.
\end{remark}

\begin{corollary}
\label{cor:tbp} Let $F$ be a number field. If  $f\in M^!_{1/2,\tilde
\rho_L}$ has the property that all of its coefficients lie in $F$,
then the divisor $y_{\Delta,r}(f)$ vanishes in the Jacobian
$J(\Q(\sqrt{D}))\otimes_{\Z} F$.
\end{corollary}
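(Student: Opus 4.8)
The plan is to reduce to forms with rational Fourier coefficients and then to combine Theorem~\ref{equivcond} (for $\Delta\neq1$) and the finiteness of the generalized Borcherds multiplier system (for $\Delta=1$) with the Manin--Drinfeld theorem.

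First I would exploit the $\Q$-rational structure on $M^!_{1/2,\tilde\rho_L}$. Restricting to any fixed bound on the order of the pole at the cusps, this is a finite-dimensional space with a basis of forms having rational Fourier coefficients (cf.\ the proof of Lemma~\ref{rationalcoeff} and \cite{McG}). Since the Fourier-coefficient map is injective on such a finite-dimensional space, a form all of whose coefficients lie in $F$ lies in the $F$-span of these rational forms; concretely, fixing a $\Q$-basis $\omega_1,\dots,\omega_s$ of $F$, I can write $f=\sum_{i=1}^s\omega_i f_i$ with each $f_i\in M^!_{1/2,\tilde\rho_L}$ having rational coefficients. Because $y_{\Delta,r}(\cdot)$ is $\Q$-linear in the principal-part coefficients of its argument, this gives $y_{\Delta,r}(f)=\sum_i\omega_i\,y_{\Delta,r}(f_i)$, and since $J(\Q(\sqrt\Delta))\otimes\Q$ injects into $J(\Q(\sqrt\Delta))\otimes F$, it suffices to prove that each $y_{\Delta,r}(f_i)$ is torsion in $J(\Q(\sqrt\Delta))$.

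Fix one such $f_0:=f_i$. If $\Delta\neq1$, then $f_0$ satisfies condition (iii) of Theorem~\ref{equivcond}, and the implication (iii)$\Rightarrow$(i) yields that a non-zero multiple of $y_{\Delta,r}(f_0)=Z_{\Delta,r}(f_0)$ is a principal divisor, so $y_{\Delta,r}(f_0)=0$ in $J(\Q(\sqrt\Delta))\otimes\Q$. If $\Delta=1$, I would instead pass to the generalized Borcherds product of Theorem~\ref{product}: after multiplying $f_0$ by a positive integer so that $c^+(n,h)\in\Z$ for $n\le0$, and subtracting a rational combination of weight $1/2$ theta series for $K$ to arrange $c^+(0,0)=0$ (which leaves $y_{\Delta,r}(f_0)$ unchanged, as these theta series have no negative-index terms), the product $\Psi_{\Delta,r}(z,f_0)$ has weight $0$ and, by the remark following Lemma~\ref{rationalcoeff2} (for $\Delta=1$ this is Theorem~13.3 of \cite{Bo1}), a character of finite order. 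Hence a suitable power of $\Psi_{\Delta,r}(z,f_0)$ is a rational function on $X_0(N)$ with divisor a non-zero multiple of $Z_{\Delta,r}(f_0)+C_{\Delta,r}(f_0)$.

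Finally I would pass from $Z_{\Delta,r}(f_0)+C_{\Delta,r}(f_0)$ to $y_{\Delta,r}(f_0)$: both are degree-zero divisors differing only by a divisor supported at the cusps, which is torsion in $J$ by the Manin--Drinfeld theorem; since $Z_{\Delta,r}(f_0)+C_{\Delta,r}(f_0)$ is itself torsion, so is $y_{\Delta,r}(f_0)$. As $y_{\Delta,r}(f_0)$ is defined over $\Q(\sqrt\Delta)$ by Lemma~\ref{properties}(v), this gives $y_{\Delta,r}(f_0)=0$ in $J(\Q(\sqrt\Delta))\otimes\Q$, and assembling over $i$ yields the claim in $J(\Q(\sqrt\Delta))\otimes F$. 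I expect the genuine content to be concentrated in the $\Delta=1$ case and the attendant cusp bookkeeping, all of which rests on the finiteness of the Borcherds multiplier system already established above; within the present argument the one point requiring care is the descent of the first step, namely that an $F$-rational weakly holomorphic form really does split over $F$ into $\Q$-rational forms, since this underlies the $\Q$-linearity used to decompose the problem.
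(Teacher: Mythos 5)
Your proof is correct and follows essentially the same route as the paper, whose entire proof reads ``The assertion follows from Theorems \ref{product} and \ref{equivcond-borcherds}, using the Galois action on $M^!_{1/2,\tilde\rho_L}$'' --- your decomposition $f=\sum_i\omega_i f_i$ into $\Q$-rational pieces is exactly what ``the Galois action'' is doing there, and the finiteness of the Borcherds character for rational weakly holomorphic input is the content of Theorems \ref{product} and \ref{equivcond-borcherds} (supplemented for $\Delta=1$ by Remark \ref{orthcusp2} and Theorem 13.3 of \cite{Bo1}, as you note). Your treatment of the $\Delta=1$ case and the Manin--Drinfeld bookkeeping at the cusps just makes explicit what the paper leaves implicit.
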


\begin{proof}
The assertion follows from Theorems \ref{product} and
\ref{equivcond-borcherds}, using
the Galois action on $M^!_{1/2,\tilde\rho_L}$.
\end{proof}

As another corollary, we obtain the following generalization of the
Gross-Kohnen-Zagier theorem \cite{GKZ}. It can derived from
Corollary \ref{cor:tbp} using Serre duality as in \cite{Bo2}.

\begin{corollary}
\label{GKZ-theorem} Let $\rho=\rho_L$ when $\Delta>0$, and let
$\rho=\bar\rho_L$ when $\Delta<0$. The generating series
\[
A_{\Delta,r}(\tau) =
\sum_{h\in L'/L} \sum_{n>0
} y_{\Delta,r }(-n,h)q^n\frake_{h}
\]
is a cusp form of weight $3/2$ for $\tilde \Gamma$ of type
$\bar\rho$ with values in $J(\Q(\sqrt{\Delta}))$ {\text {\rm (}}i.e.
$A_{\Delta,r}\in S_{3/2,\bar\rho}\otimes_\Z
J(\Q(\sqrt{\Delta}))${\text {\rm )}}.
%
\hfill $\square$
\end{corollary}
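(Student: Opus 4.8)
The plan is to derive the modularity of $A_{\Delta,r}$ from the already-established Corollary \ref{cor:tbp} by invoking the Serre duality criterion for modular forms of Borcherds \cite{Bo2}, exactly as indicated before the statement. Throughout I set $\rho=\tilde\rho_L$, so that $A_{\Delta,r}$ is a candidate form of weight $3/2$ and type $\bar\rho$, while the naturally dual space of weakly holomorphic forms is $M^!_{1/2,\tilde\rho_L}$. By Lemma \ref{properties}(v), each coefficient $y_{\Delta,r}(-n,h)$ is a degree-zero divisor class defined over $\Q(\sqrt{\Delta})$, so $A_{\Delta,r}$ is a formal $q$-series with coefficients in the finite-dimensional $\C$-vector space $W:=J(\Q(\sqrt{\Delta}))\otimes_\Z\C$, which is finite-dimensional by the Mordell--Weil theorem.

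First I would recall Borcherds' criterion, applied coefficientwise through the dual $W^*$: a formal holomorphic series $\sum_{h,n>0}a(n,h)\,q^n\frake_h$ with values in $W$ represents an element of $S_{3/2,\bar\rho}\otimes_\C W$ if and only if, for every weakly holomorphic form $f=\sum_{h,m}c^+(m,h)\,q^m\frake_h\in M^!_{1/2,\tilde\rho_L}$, the finite pairing $\sum_{h}\sum_{m<0}c^+(m,h)\,a(-m,h)$ vanishes in $W$. Since $\Mp_2(\Z)$ has a single cusp, a holomorphic form with vanishing constant term is automatically cuspidal, so the criterion indeed lands in the cusp space and no Eisenstein contribution can intervene.

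Next I would match this pairing with the Heegner divisor attached to $f$. Taking $a(n,h)=y_{\Delta,r}(-n,h)$, the index shift $m\leftrightarrow -n$ turns the duality pairing into $\sum_{h}\sum_{m<0}c^+(m,h)\,y_{\Delta,r}(m,h)$, which is by definition precisely $y_{\Delta,r}(f)\in\Div^0(X_0(N))\otimes\R$, viewed in $W$. Thus the whole criterion reduces to the vanishing of $y_{\Delta,r}(f)$ in $W$ for every $f\in M^!_{1/2,\tilde\rho_L}$. Here I would use that $M^!_{1/2,\tilde\rho_L}$ has a basis of forms with rational coefficients (\cite{McG}); since $y_{\Delta,r}(f)$ depends only on the principal part of $f$ and is $\C$-linear in it, it suffices to verify the vanishing on this rational basis, where it is supplied by Corollary \ref{cor:tbp} with $F=\Q$. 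By $\C$-linearity the pairing then vanishes for all $f$, so Borcherds' criterion yields $A_{\Delta,r}\in S_{3/2,\bar\rho}\otimes_\C W$. Finally, because the coefficients actually lie in the $\Q$-subgroup $J(\Q(\sqrt{\Delta}))\otimes\Q$ and $S_{3/2,\bar\rho}$ carries a rational structure, modularity descends to the claimed statement $A_{\Delta,r}\in S_{3/2,\bar\rho}\otimes_\Z J(\Q(\sqrt{\Delta}))$.

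Since the deep input, Corollary \ref{cor:tbp}, is already in hand, the only genuine work is this reduction, and the main obstacle I anticipate is the careful bookkeeping required to state and apply Borcherds' duality correctly in the vector-valued, $W$-coefficient setting. One must (i) match the representation types on the two sides of the pairing, tracking the dependence of $\tilde\rho_L$ on the sign of $\Delta$; (ii) check that the index shift $m\leftrightarrow-n$ identifies the duality pairing exactly with $y_{\Delta,r}(f)$, including the correct handling of the (vanishing) constant term; and (iii) justify the passage from the rational-coefficient vanishing furnished by Corollary \ref{cor:tbp} to vanishing on the full complex span, for which the existence of a rational basis of $M^!_{1/2,\tilde\rho_L}$ is essential.
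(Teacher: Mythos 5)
Your proposal is correct and follows essentially the same route as the paper, whose proof consists precisely of the remark that the corollary ``can be derived from Corollary \ref{cor:tbp} using Serre duality as in \cite{Bo2}''. You have simply made explicit the steps the authors leave to the reader: Borcherds' duality criterion, the identification of the pairing $\{A_{\Delta,r},f\}$ with $y_{\Delta,r}(f)$ via \eqref{pairalt}, the reduction to a rational basis of $M^!_{1/2,\tilde\rho_L}$ so that Corollary \ref{cor:tbp} applies, and the passage from holomorphy to cuspidality via the vanishing constant term.
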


\section{Hecke eigenforms and isotypical components of the Jacobian}
\label{sect:jac}

Now we consider the implications of the results of the previous
section when the action of the Hecke algebra is introduced. We begin
with some notation. Let $L$ be the lattice of discriminant $2N$
defined in Section \ref{subsect:1.3}. Let $k\in
\frac{1}{2}\Z\setminus \Z$. The space of vector valued holomorphic
modular forms $M_{k,\rho_L}$ is isomorphic to the space of skew
holomorphic Jacobi forms $J_{k+1/2,N}^{skew}$ of weight $k+1/2$ and
index $N$. Moreover, $M_{k,\bar\rho_L}$ is isomorphic to the space
of holomorphic Jacobi forms $J_{k+1/2,N}$. There is an extensive
Hecke theory for Jacobi forms (see \cite{EZ}, \cite{Sk1},
\cite{SZ}), which gives rise to a Hecke theory on
 $M_{k,\rho_L}$ and  $M_{k,\bar\rho_L}$, and which is compatible with the Hecke theory on vector valued modular forms considered in \cite{BrSt}. In particular, there is an Atkin-Lehner theory for these spaces.

The subspace $S_{k,\rho_L}^{new}$ of newforms of $S_{k,\rho_L}$ is
isomorphic as a module over the Hecke algebra to the space of
newforms $S^{new,+}_{2k-1}(N)$ of weight $2k-1$ for $\Gamma_0(N)$ on
which the Fricke involution acts by multiplication with
$(-1)^{k-1/2}$. The isomorphism is given by the Shimura
correspondence. Similarly, the subspace $S_{k,\bar\rho_L}^{new}$ of
newforms of $S_{k,\bar\rho_L}$ is isomorphic as a module over the
Hecke algebra to the space of newforms $S^{new,-}_{2k-1}(N)$ of
weight $2k-1$ for $\Gamma_0(N)$ on which the Fricke involution acts
by multiplication with $(-1)^{k+1/2}$ (see \cite{SZ}, \cite{GKZ},
\cite{Sk1}). Observe that the Hecke $L$-series of any $G\in
S^{new,\pm}_{2k-1}(N)$ satisfies a functional equation under
$s\mapsto 2k-1-s$ with root number $\pm 1$. If $G\in
S^{new,\pm}_{2k-1}(N)$ is a normalized newform (in particular a
common eigenform of all Hecke operators), we denote by $F_G$ the
number field generated by the Hecke eigenvalues of $G$. It is well
known that we may normalize the preimage of $G$ under the Shimura
correspondence such that all its Fourier coefficients are contained
in $F_G$.

Let $\rho$ be one of the representations $\rho_L$ or $\bar\rho_L$.
For every positive integer $l$ there is a Hecke operator $T(l)$ on
$M_{k,\rho}$ which is self adjoint with respect to the Petersson
scalar product. The action on the Fourier expansion
$g(\tau)=\sum_{h,n} b(n,h)e(m\tau)\frake_h$ of any $g\in M_{k,\rho}$
can be described explicitly (for example, see \S4 of \cite{EZ}, or
\S0 (5) of \cite{SZ}). For instance, if $p$ is a prime coprime to
$N$ and we write $g\mid_{k} T(p)= \sum_{h,n}
b^*(n,h)e(n\tau)\frake_h$, we have
\begin{align}
\label{heckeop} b^*(n,h)=b(p^2n,ph)+p^{k-3/2}\leg{4N\sigma n}{p}
b(n,h)+p^{2k-2}b(n/p^2,h/p),
\end{align}
where $\sigma=1$ if $\rho=\rho_L$, and $\sigma=-1$ if
$\rho=\bar\rho_L$. There are similar formulas for general $l$.

The Hecke operators act on harmonic weak Maass forms and on weakly
holomorphic modular forms in an analogous way. In particular, the
formula for the action on Fourier coefficients is the same. In the
following we assume that $k\leq 1/2$. Recall from Section
\ref{sect:modularforms} that there is a bilinear pairing
$\{\cdot,\cdot\}$ between the spaces $S_{2-k,\bar\rho}$ and
$H_{k,\rho}$, which induces a non-degenerate pairing of
$S_{2-k,\bar\rho}$ and $H_{k,\rho}/M^!_{k,\rho}$.

\begin{proposition}
The Hecke operator $T(l)$ is (up to a scalar factor) self adjoint
with respect to the pairing $\{\cdot,\cdot\}$. More precisely we
have
\[
\{g,f\mid_{k} T(l)\} = l^{2k-2} \{g\mid_{2-k} T(l),f\}
\]
for any $g\in S_{2-k,\bar\rho}$ and $f\in H_{k,\rho}$.
\end{proposition}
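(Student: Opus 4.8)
The plan is to reduce everything to the explicit Fourier-coefficient descriptions already available: the pairing by \eqref{pairalt} and the Hecke action by \eqref{heckeop}. Write $f=f^++f^-$ as in \eqref{deff} with coefficients $c^\pm(n,h)$, and $g=\sum_{h,n}b(n,h)e(n\tau)\frake_h$. Since $g$ is a cusp form, $b(0,h)=0$, so \eqref{pairalt} reads $\{g,f\}=\sum_{h}\sum_{m>0}c^+(-m,h)b(m,h)$. The key structural observation is that this depends only on the principal part of $f$, and that \eqref{heckeop} expresses each principal-part coefficient of $f\mid_k T(l)$ through principal-part coefficients of $f$ alone (the shifted indices $p^2n,\,n,\,n/p^2$ remain $\le 0$ for $n\le 0$). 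Hence both $\{g,f\mid_k T(l)\}$ and $\{g\mid_{2-k}T(l),f\}$ are finite sums to which \eqref{heckeop} applies verbatim.

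By multiplicativity it suffices to treat $l=p$ prime, the case $p\nmid N$ being representative; prime powers and primes dividing $N$ are handled identically using the corresponding explicit formulas. First I would insert \eqref{heckeop}, in weight $k$ and representation $\rho$, into $\{g,f\mid_k T(p)\}$, and insert \eqref{heckeop}, in weight $2-k$ and representation $\bar\rho$, into $p^{2k-2}\{g\mid_{2-k}T(p),f\}$; each side splits into three sums, and I would match them in pairs. The two ``middle'' sums carry the Legendre symbol $\leg{4N\sigma n}{p}$ from $f$ (evaluated at $n=-m$, hence $\leg{-4N\sigma m}{p}$) and the symbol $\leg{4N\bar\sigma m}{p}$ from $g$; since $g$ carries the dual representation we have $\bar\sigma=-\sigma$, so the two coincide, while the attached powers $p^{k-3/2}$ and $p^{(2-k)-3/2}$ differ by exactly $p^{2k-2}$. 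For the two ``off-diagonal'' sums one relabels the nonzero contributions by $(m,h)\mapsto(p^2m,ph)$, matching the term with $c^+(-p^2m,ph)$ against the term with $b(p^2m,ph)$; here the normalizing powers $p^{2k-2}$ (weight $k$) and $p^{2(2-k)-2}=p^{2-2k}$ (weight $2-k$) again combine to the single factor $p^{2k-2}$. Collecting the three matched pairs gives the claimed identity.

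The main obstacle is not conceptual but organizational: one must check that every power of $p$ coming from the two different weight normalizations assembles into the single scalar $l^{2k-2}$, and that the sign $\sigma$ attached to $\rho$, the sign $\bar\sigma=-\sigma$ attached to $\bar\rho$, and the index reflection $n\mapsto-n$ conspire to make the two middle Legendre symbols agree. A slip in either place breaks the match. Treating all $l$ uniformly (in particular $l$ sharing factors with $N$, where the general Hecke formulas replace \eqref{heckeop}) is the remaining point of care, but it is routine once the prime case is in hand.

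A cleaner alternative avoids the explicit sums entirely. By \eqref{defpair} we have $\{g,f\}=(g,\xi_k(f))_{2-k}$ with $\xi_k(f)\in S_{2-k,\bar\rho}$, so the Petersson integral converges. It then suffices to establish the operator intertwining $\xi_k(f\mid_k T(l))=l^{2k-2}\,\xi_k(f)\mid_{2-k}T(l)$ and to invoke the self-adjointness of $T(l)$ for the weight $2-k$ Petersson product recalled before the proposition. The intertwining follows from the same coefficient comparison applied to the non-holomorphic coefficients $c^-(-n,h)$ via \eqref{fourierxi}: the factor $(4\pi n)^{1-k}$ there is precisely what converts the weight-$k$ normalization of $T(l)$ into the weight-$(2-k)$ one, leaving the scalar $l^{2k-2}$, and this version produces the identity for all $l$ simultaneously.
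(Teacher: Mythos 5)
Your argument is correct, but your lead argument is not the one the paper uses; interestingly, your ``cleaner alternative'' at the end \emph{is} the paper's proof almost verbatim. The paper simply records the intertwining identity $\xi_k(f\mid_k T(l)) = l^{2k-2}\,(\xi_k f)\mid_{2-k} T(l)$ (its equation \eqref{tlxi}), observes that $\{g,f\}=(g,\xi_k(f))_{2-k}$ by \eqref{defpair}, and concludes from the self-adjointness of $T(l)$ for the weight $2-k$ Petersson product. Your primary route instead works entirely on the other side of the pairing: it feeds the Hecke formula \eqref{heckeop} into the coefficient expression \eqref{pairalt} and matches the three resulting sums term by term, using that the principal-part indices stay $\leq 0$ under $n\mapsto p^2n,\ n,\ n/p^2$ and that $\bar\sigma=-\sigma$ together with $n\mapsto -n$ makes the two middle Legendre symbols coincide. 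I checked the bookkeeping: the middle terms carry $p^{k-3/2}$ versus $p^{1/2-k}$, and the off-diagonal terms carry $1$ and $p^{2k-2}$ versus $1$ and $p^{2-2k}$, so after reindexing $(n,h)\mapsto(p^2n,ph)$ everything does assemble into the single factor $l^{2k-2}$. What your computational route buys is independence from the analytic input (no Petersson integral, no appeal to self-adjointness of $T(l)$ on cusp forms), at the cost of being tied to the explicit formula for $T(p)$ with $p\nmid N$ and requiring the ``general $l$'' formulas to be checked separately; the paper's route (and your alternative) handles all $l$ at once and isolates the one identity \eqref{tlxi} that is reused later in the paper, e.g.\ in Lemma \ref{lem:hecke}. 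Note also that your verification of the intertwining via \eqref{fourierxi} tacitly uses the paper's assertion that $T(l)$ acts on the coefficients $c^-$ of a harmonic weak Maass form by the same formula \eqref{heckeop}; that is stated in the text just before the proposition, so it is a legitimate input, but it is the one point where your ``same coefficient comparison'' is not purely formal.
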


\begin{proof}
From the definition of the Hecke operator or from its action on the
Fourier expansion of $f$ one sees that
\begin{align}
\label{tlxi} \xi_k (f\mid_k T(l)) = l^{2k-2} (\xi_k f) \mid_{2-k}
T(l).
\end{align}
Hence the assertion follows from the the self-adjointness of $T(l)$
with respect to the Petersson
scalar product on  $S_{2-k,\bar\rho}$.
\end{proof}

Recall that (see \cite{McG}) the space $S_{2-k,\bar\rho}$ has a
basis of cusp forms with rational coefficients. Let $F$ be any
subfield of $\C$. We denote by $S_{2-k,\bar\rho}(F)$ the $F$-vector
space of cusp forms in $S_{2-k,\bar\rho}$ with Fourier coefficients
in $F$. Moreover, we write $H_{k,\rho}(F)$ for the $F$-vector space
of harmonic weak Maass forms whose principal part has coefficients
in $F$. We write $M^!_{k,\rho}(F)$ for the subspace of weakly
holomorphic modular forms whose principal part has coefficients in
$F$. Using the pairing $\{\cdot,\cdot\}$, we identify
$H_{k,\rho}(F)/M^!_{k,\rho}(F)$ with the $F$-dual of
$S_{2-k,\bar\rho}(F)$.

\begin{lemma}
\label{pair1} Let $g\in S_{2-k,\bar\rho}$, and suppose that $f\in
H_{k,\rho}$ has the property that $\{g,f\}=1$, and also satisfies
$\{g',f\}=0$ for all $g'\in S_{2-k,\bar\rho}$ orthogonal to $g$.
Then $\xi_k(f)=\| g\|^{-2} g$, where $\| g\|$ denotes the Petersson
norm of $g$.\end{lemma}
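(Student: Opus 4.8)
The plan is to exploit the observation that, by the very definition \eqref{defpair} of the pairing, $\{g',f\}=(g',\xi_k(f))_{2-k}$ is simply the Petersson inner product of $g'$ with $\xi_k(f)$, combined with the fact that $\xi_k(f)$ lies in the finite-dimensional space $S_{2-k,\bar\rho}$ on which the Petersson product is a positive definite (in particular non-degenerate) Hermitian form. The two hypotheses thus prescribe the inner product of $\xi_k(f)$ against every element of a spanning set of $S_{2-k,\bar\rho}$, and non-degeneracy will force $\xi_k(f)$ to be the asserted multiple of $g$.

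First I would record that $g\neq 0$: since $\{g,f\}=1\neq 0$ we cannot have $g=0$, so $\|g\|^2>0$ and division by it is legitimate. Set $s:=\xi_k(f)\in S_{2-k,\bar\rho}$ and $t:=\|g\|^{-2}g$. The key step is to evaluate $(g',s)_{2-k}$ for an arbitrary $g'\in S_{2-k,\bar\rho}$. Decomposing orthogonally $g'=cg+g''$ with $c\in\C$ and $g''\perp g$, linearity of the pairing in its first argument together with $\{g,f\}=1$ and $\{g'',f\}=0$ gives $(g',s)_{2-k}=c\{g,f\}+\{g'',f\}=c$. On the other hand, since $\|g\|^{-2}$ is real (so antilinearity in the second slot introduces no conjugation) one computes $(g',t)_{2-k}=\|g\|^{-2}(g',g)_{2-k}=\|g\|^{-2}(cg+g'',g)_{2-k}=\|g\|^{-2}(c\|g\|^2+0)=c$. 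Hence $(g',s)_{2-k}=(g',t)_{2-k}$ for every $g'\in S_{2-k,\bar\rho}$.

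Finally, applying non-degeneracy of the Petersson product on $S_{2-k,\bar\rho}$ to the difference $s-t$ yields $\xi_k(f)=s=t=\|g\|^{-2}g$, as claimed.

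I do not expect any serious obstacle here: once one recognizes $\{g',f\}$ as a Petersson inner product and $\xi_k(f)$ as a cusp form, the statement reduces to elementary linear algebra with a non-degenerate form. The only point that requires a little care is the antilinearity of $\langle\cdot,\cdot\rangle$, and hence of $(\cdot,\cdot)_{2-k}$, in its second entry; this is why I would perform the orthogonal decomposition in the first argument, where the pairing is linear, and take advantage of the fact that the scalar $\|g\|^{-2}$ is real so that its appearance in the second slot causes no conjugation.
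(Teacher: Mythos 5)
Your proof is correct and is exactly the argument the paper has in mind: the paper's own proof consists of the single line ``This follows directly from the definition of the pairing,'' and your write-up simply spells out that observation — namely that $\{g',f\}=(g',\xi_k(f))_{2-k}$ prescribes the Petersson inner product of $\xi_k(f)\in S_{2-k,\bar\rho}$ against all of $S_{2-k,\bar\rho}$, so positive definiteness pins down $\xi_k(f)=\|g\|^{-2}g$. Your care with the antilinearity in the second slot is appropriate but does not change the substance.
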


\begin{proof}
This follows directly from the definition of the pairing.
\end{proof}

\begin{lemma}
\label{goodf} Let $F$ be a subfield of $\C$, and let $g\in
S_{2-k,\bar\rho}(F)$ be a newform. There is a $f \in H_{k,\rho}(F)$
such that
\[
\xi_k(f)=\| g\|^{-2} g.
\]
\end{lemma}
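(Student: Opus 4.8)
The plan is to reduce the construction of $f$ to the pairing criterion of Lemma~\ref{pair1}, and then to produce such an $f$ over $F$ by exploiting the $F$-rationality of the $g$-isotypical projection on $S_{2-k,\bar\rho}$. Recall that we have already identified $H_{k,\rho}(F)/M^!_{k,\rho}(F)$ with the $F$-linear dual of $S_{2-k,\bar\rho}(F)$ via the non-degenerate pairing $\{\cdot,\cdot\}$, and that by \cite{McG} the space $S_{2-k,\bar\rho}$ has a basis with coefficients in $\Q$, so that $S_{2-k,\bar\rho}=S_{2-k,\bar\rho}(F)\otimes_F\C$ and $\{\cdot,\cdot\}$ is the $\C$-bilinear extension of its $F$-rational restriction (see \eqref{pairalt}). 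By Lemma~\ref{pair1} it therefore suffices to exhibit an $F$-linear functional $\phi$ on $S_{2-k,\bar\rho}(F)$ with $\phi(g)=1$ and $\phi(g')=0$ for every $g'\in S_{2-k,\bar\rho}$ Petersson-orthogonal to $g$: the element of $H_{k,\rho}(F)/M^!_{k,\rho}(F)$ dual to $\phi$ is then represented by a form $f\in H_{k,\rho}(F)$ with $\{g,f\}=1$ and $\{g',f\}=0$ on $g^\perp$, whence $\xi_k(f)=\|g\|^{-2}g$.

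To build $\phi$, I would use that the Hecke operators $T(l)$ act on $S_{2-k,\bar\rho}$, preserve the $\Q$- (hence $F$-) rational structure $S_{2-k,\bar\rho}(F)$, and are self-adjoint for the Petersson scalar product; consequently they are simultaneously diagonalizable, and eigenforms for distinct systems of eigenvalues are mutually orthogonal. Since $g$ is a newform, its system of Hecke eigenvalues occurs with multiplicity one, so the corresponding common eigenspace is exactly $\C g$; because these eigenvalues all lie in $F$, the associated idempotent $e_g$ of the Hecke algebra is defined over $F$ and projects $S_{2-k,\bar\rho}$ onto $\C g$ along the sum of the remaining eigenspaces. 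By orthogonality that sum is precisely $g^\perp$, so $\ker e_g=g^\perp$ and $e_g(g)=g$. I would then define $\phi$ by the relation $e_g(h)=\phi(h)\,g$. For $h\in S_{2-k,\bar\rho}(F)$ we have $e_g(h)\in S_{2-k,\bar\rho}(F)$, and comparing any nonzero Fourier coefficient of $g$ (which lies in $F$) with the corresponding coefficient of $e_g(h)$ shows $\phi(h)\in F$; thus $\phi$ is $F$-rational, with $\phi(g)=1$ and $\phi|_{g^\perp}=0$ as required.

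Feeding this $\phi$ into the $F$-duality and then into Lemma~\ref{pair1} completes the argument. The main obstacle is the second step: one must know that the $g$-isotypical projection is simultaneously Petersson-orthogonal and defined over $F$. This rests entirely on the Hecke theory of $S_{2-k,\bar\rho}$ recalled earlier (following \cite{EZ}, \cite{Sk1}, \cite{SZ}, \cite{BrSt}) --- self-adjointness of the $T(l)$, $F$-rationality of the Hecke action on $S_{2-k,\bar\rho}(F)$, and multiplicity one for the newform $g$ --- which together force the $F$-rational Hecke idempotent $e_g$ to cut out exactly the line $\C g$ inside the Petersson orthogonal decomposition. Everything else is formal bookkeeping with the perfect $F$-pairing.
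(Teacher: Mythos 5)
Your proposal is correct and follows essentially the same route as the paper: both reduce to Lemma~\ref{pair1} via the non-degenerate $F$-rational pairing identifying $H_{k,\rho}(F)/M^!_{k,\rho}(F)$ with the dual of $S_{2-k,\bar\rho}(F)$, and both use the newform hypothesis to see that the Petersson-orthogonal complement of $g$ is defined over $F$. The only difference is one of presentation: the paper simply asserts that $g^\perp$ has an $F$-rational basis and takes the dual basis of $g,g_2,\dots,g_d$, whereas you justify this assertion explicitly via the $F$-rational Hecke idempotent $e_g$ and multiplicity one.
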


\begin{proof}
Since $g\in S_{2-k,\bar\rho}(F)$ is a newform, the orthogonal
complement of $g$ with respect to the Petersson scalar product has a
basis consisting of cusp forms with coefficients in $F$. Let
$g_2,\dots,g_d\in S_{2-k,\bar\rho}(F)$ be a basis of the orthogonal
complement of $g$. Let $f_1,\dots,f_d\in H_{k,\rho}(F)$ be the dual
basis of the basis $g,g_2,\dots,g_d$ with respect to $\{\cdot,\cdot
\}$. In particular $\{g,f_1\}=1$, and $\{g,f_j\}=0$ for all
$j=2,\dots,d$. According to Lemma \ref{pair1} we have that
$\xi_k(f_1)=\| g\|^{-2} g$. This completes the proof of the lemma.
\end{proof}

\begin{lemma}
\label{lem:hecke} Let $f\in H_{k,\rho}(F)$ and assume that
$\xi_k(f)\mid_{2-k} T(l) = \lambda_l \xi_k(f)$ with $\lambda_l\in
F$. Then
\[
f\mid_{k} T(l) - l^{2k-2} \lambda_l f\in M^!_{k,\rho}(F).
\]
\end{lemma}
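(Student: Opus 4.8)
The plan is to show that the difference lies in $\ker\xi_k=M^!_{k,\rho}$ and then, separately, that its principal part is defined over $F$. Write $g:=f\mid_k T(l)-l^{2k-2}\lambda_l f$. First I would apply $\xi_k$ to $g$. Since $\xi_k$ is antilinear (it is $v^{k-2}\,\overline{L_k(\cdot)}$, so $\xi_k(cf)=\bar c\,\xi_k(f)$), while using the intertwining relation \eqref{tlxi}, namely $\xi_k(f\mid_k T(l))=l^{2k-2}(\xi_k f)\mid_{2-k}T(l)$, together with the hypothesis $(\xi_k f)\mid_{2-k}T(l)=\lambda_l\,\xi_k(f)$, one obtains
\[
\xi_k(g)=l^{2k-2}\lambda_l\,\xi_k(f)-\overline{l^{2k-2}\lambda_l}\,\xi_k(f)=\bigl(l^{2k-2}\lambda_l-\overline{l^{2k-2}\lambda_l}\bigr)\xi_k(f).
\]

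The crux is then to see that the scalar in parentheses vanishes. If $\xi_k(f)=0$ there is nothing to check. Otherwise $\xi_k(f)\in S_{2-k,\bar\rho}$ is a nonzero eigenform of $T(l)$ with eigenvalue $\lambda_l$; because $T(l)$ is self-adjoint with respect to the Petersson scalar product on cusp forms, its eigenvalues are real, so $\lambda_l\in\R$. As $l^{2k-2}$ is a positive rational (recall $2k-2\in\Z$), the product $l^{2k-2}\lambda_l$ is real, so the bracketed scalar is $0$. In either case $\xi_k(g)=0$, and hence $g\in\ker\xi_k=M^!_{k,\rho}$ by the exactness of \eqref{ex-sequ}.

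It remains to check that $g\in M^!_{k,\rho}(F)$, that is, that its principal part has coefficients in $F$. Here I would use that the Hecke action on Fourier coefficients \eqref{heckeop} is defined over $\Q$ (the factors $p^{k-3/2}$ and $p^{2k-2}$ are rational powers and the genus character contributes values in $\{0,\pm1\}$), and that in that formula a principal-part output coefficient $b^*(n,h)$ with $n\le 0$ is a $\Q$-linear combination of input coefficients $b(\cdot,\cdot)$ whose indices are again in the principal-part range; thus $T(l)$ carries the principal part of $f$ to a principal part with coefficients in the same field. Since $f\in H_{k,\rho}(F)$, the principal part of $f\mid_k T(l)$ lies in $F$, and likewise $l^{2k-2}\lambda_l f$ has principal part over $F$ because $l^{2k-2}\in\Q$ and $\lambda_l\in F$. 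Subtracting, the principal part of $g$ is defined over $F$, and combined with $g\in M^!_{k,\rho}$ this yields $g\in M^!_{k,\rho}(F)$.

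The main obstacle is precisely the antilinearity of $\xi_k$: a priori an element $\lambda_l\in F$ need not be real, and the first step only closes after invoking the reality of Hecke eigenvalues on cusp forms to force $l^{2k-2}\lambda_l$ to be real, so that the conjugate term cancels. Everything else is bookkeeping on the rational Hecke action and the identification $\ker\xi_k=M^!_{k,\rho}$.
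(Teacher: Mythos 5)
Your proof is correct and follows the same route as the paper's: show that the difference is annihilated by $\xi_k$ via the intertwining relation \eqref{tlxi} together with the eigenform hypothesis, and check via the explicit Hecke action on Fourier coefficients \eqref{heckeop} that the principal part of $f\mid_k T(l)$ stays defined over $F$. The one thing you add is to make explicit a point the paper's two-line proof glosses over, namely that the antilinearity of $\xi_k$ forces one to know that $l^{2k-2}\lambda_l$ is real before the two terms cancel; your appeal to the self-adjointness of $T(l)$ on $S_{2-k,\bar\rho}$ (handling the degenerate case $\xi_k(f)=0$ separately) closes that gap correctly.
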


\begin{proof}
The formula for the action of $T(l)$ on the Fourier expansion
implies that $f\mid_{k} T(l)\in H_{k,\rho}(F)$. Moreover, it follows
from \eqref{tlxi} that
\[
\xi_k\left( f\mid_{k} T(l) - l^{2k-2} \lambda_l f\right) =0.
\]
This proves the lemma.
\end{proof}

We now come to the main results of this section. Let $G\in
S_{2}^{new}(N)$ be a normalized newform of weight $2$ and write
$F_G$ for the number field generated by the eigenvalues of $G$. If
$G\in S_{2}^{new,-}(N)$, we put $\rho=\rho_L$ and assume that
$\Delta$ is a positive fundamental discriminant. If $G\in
S_{2}^{new,+}(N)$, we put $\rho=\bar \rho_L$ and assume that
$\Delta$ is a negative fundamental discriminant. There is a newform
$g\in S_{3/2,\bar\rho}^{new}$ mapping to $G$ under the Shimura
correspondence. We may normalize $g$ such that all its coefficients
are contained in $F_G$. Therefore by Lemma \ref{goodf}, there is a
harmonic weak Maass form $f\in H_{1/2, \rho}(F_G)$ such that
\[
\xi_{1/2}(f)=\|g\|^{-2} g.
\]
This form is  unique up to addition of a weakly holomorphic form in
$M^!_{1/2, \rho}(F_G)$.

\begin{theorem}
\label{goodpoint} The divisor $y_{\Delta,r}(f)\in
\Div(X_0(N))\otimes F_G$ defines a point in the $G$-isotypical
component of the Jacobian $J(\Q(\sqrt{\Delta}))\otimes\C$.
\end{theorem}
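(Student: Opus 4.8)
The plan is to show that the class $P:=y_{\Delta,r}(f)$ is a simultaneous Hecke eigenvector in $J(\Q(\sqrt{\Delta}))\otimes\C$ sharing the eigenvalue system of $G$. First I would record that $P$ lies in $J(\Q(\sqrt{\Delta}))\otimes F_G$: since $f\in H_{1/2,\rho}(F_G)$ the coefficients $c^+(m,h)$ with $m<0$ lie in $F_G$, and by Lemma~\ref{properties}~(v) each $y_{\Delta,r}(m,h)$ is defined over $\Q(\sqrt{\Delta})$. It then suffices to prove $T(l)P=\lambda_l P$ for all primes $l\nmid N$, where $\lambda_l$ is the $T(l)$-eigenvalue of $G$, since by newform theory this pins $P$ to the $G$-isotypical component.

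The first, essentially formal, ingredient is an algebraic relation among the lifts. Since $\xi_{1/2}(f)=\|g\|^{-2}g$ and $g\in S_{3/2,\bar\rho}^{new}$ is the Shimura lift of $G$ — so that $g\mid_{3/2}T(l)=\lambda_l g$ under the Hecke-module isomorphism $S_{3/2,\bar\rho}^{new}\cong S_2^{new,\pm}(N)$ recalled above — we have $\xi_{1/2}(f)\mid_{3/2}T(l)=\lambda_l\,\xi_{1/2}(f)$. Lemma~\ref{lem:hecke} with $k=1/2$ (whence $l^{2k-2}=l^{-1}$) then gives $f\mid_{1/2}T(l)-l^{-1}\lambda_l f\in M^!_{1/2,\rho}(F_G)$. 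As this weakly holomorphic form has all Fourier coefficients in $F_G$, Corollary~\ref{cor:tbp} shows its twisted Heegner divisor vanishes in $J(\Q(\sqrt{\Delta}))\otimes F_G$, and the linearity of $f\mapsto y_{\Delta,r}(f)$ yields
\[
y_{\Delta,r}(f\mid_{1/2}T(l))=l^{-1}\lambda_l\,P.
\]

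The decisive step, and the one I expect to be the main obstacle, is the compatibility of the two Hecke actions, namely
\[
T(l)\,y_{\Delta,r}(f)=l\,y_{\Delta,r}(f\mid_{1/2}T(l)),
\]
where the $T(l)$ on the left is the geometric Hecke correspondence on $J$. I would establish this via the $J$-valued generating series $A_{\Delta,r}(\tau)=\sum_{h}\sum_{n>0}y_{\Delta,r}(-n,h)q^n\frake_h$ of Corollary~\ref{GKZ-theorem}, together with the identity $y_{\Delta,r}(f)=\{A_{\Delta,r},f\}$, which follows from \eqref{pairalt} because $A_{\Delta,r}$ is cuspidal. The hard part is the Hecke-equivariance of $A_{\Delta,r}$, i.e. that the geometric action of $T(l)$ on its divisor-class coefficients coincides with the weight $3/2$ modular operator, $T(l)A_{\Delta,r}=A_{\Delta,r}\mid_{3/2}T(l)$; this is the Gross-Kohnen-Zagier phenomenon for twisted divisors, which I would prove by matching the Hecke correspondence on the points $Z(\lambda)$ with the index shifts $b(p^2n,ph)$ and $b(n/p^2,h/p)$ appearing in \eqref{heckeop}, adapting Borcherds~\cite{Bo2}. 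Granting equivariance, the self-adjointness relation $\{g,f\mid_k T(l)\}=l^{2k-2}\{g\mid_{2-k}T(l),f\}$ gives
\[
T(l)\,y_{\Delta,r}(f)=\{A_{\Delta,r}\mid_{3/2}T(l),f\}=l\,\{A_{\Delta,r},f\mid_{1/2}T(l)\}=l\,y_{\Delta,r}(f\mid_{1/2}T(l)).
\]
Combining the two displayed relations gives $T(l)P=\lambda_l P$ for all $l\nmid N$, placing $P$ in the $G$-isotypical component. Thus the whole argument reduces to the geometric Hecke-equivariance: the algebraic relation costs almost nothing, whereas identifying the Hecke correspondence on twisted Heegner points with the combinatorial action on weight $1/2$ coefficients is where the real work lies.
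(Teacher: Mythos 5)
Your proposal is correct and follows essentially the same route as the paper: reduce to $T(l)P=\lambda_l P$, use Lemma \ref{lem:hecke} to split off a weakly holomorphic piece, kill its divisor via Corollary \ref{cor:tbp}, and establish the key compatibility $T(l)\,y_{\Delta,r}(f)=l\,y_{\Delta,r}(f\mid_{1/2}T(l))$ from the action of the Hecke correspondence on individual twisted Heegner divisors --- the paper's \eqref{heckejac}, cited from \cite{GKZ} --- so that your detour through the generating series $A_{\Delta,r}$ and the pairing $\{\cdot,\cdot\}$ is only a repackaging of the same combination of \eqref{heckejac} with \eqref{heckeop}. One small imprecision: membership in $M^!_{1/2,\rho}(F_G)$ only constrains the principal part, so before invoking Corollary \ref{cor:tbp} you must, as the paper does, appeal to Lemma \ref{rationalcoeff} to adjust by a weight $1/2$ cusp form (which does not change the twisted Heegner divisor) so that all coefficients lie in a number field.
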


\begin{proof}
We write $\lambda_l$ for eigenvalue of the Hecke operator $T(l)$
corresponding to $G$ (where $l\in \Z_{>0}$). Let $p$ be any prime
coprime to $N$. It suffices to show that under the action of the
Hecke algebra on the Jacobian we have
\[
T(p) y_{\Delta,r}(f) = \lambda_p y_{\Delta,r}(f).
\]
It is easily seen that
\begin{align}
\label{heckejac} T(p) y_{\Delta,r}(m,h) = y_{\Delta,r}(p^2 m,p h) +
\leg{4N\sigma m}{p} y_{\Delta,r}(m,h) + p y_{\Delta,r}(m/p^2,h/p) ,
\end{align}
where $m\in \Q$ is  negative, $h\in L'/L$,  and
$\sigma=\sgn(\Delta)$. (For example, see p. 507 and p. 542 of
\cite{GKZ} for the case $\Delta=1$. The general case is analogous.)
Combining this with \eqref{heckeop}, we see that
\begin{align}
\label{h1} T(p) y_{\Delta,r}(f) = p y_{\Delta,r}(f\mid_{1/2} T(p)).
\end{align}
In view of Lemma \ref{lem:hecke}, there is a $f'\in M^!_{1/2,
\rho}(F_G)$ such that
\[
f\mid_{1/2} T(p) = p^{-1} \lambda_p f+ f'.
\]
Combining this with \eqref{h1}, we find that
\begin{align*}
T(p) y_{\Delta,r}(f) = \lambda_p y_{\Delta,r}(f)+ p y_{\Delta,r}(f').
\end{align*}
But Lemma \ref{rationalcoeff} and Corollary \ref{cor:tbp} imply that
$y_{\Delta,r}(f')$ vanishes in $J(\Q(\sqrt{\Delta}))\otimes\C$.
\end{proof}

\begin{theorem}
\label{thm:alg1} Assume that $\Delta\neq 1$. Let $f\in H_{1/2,
\rho}(F_G)$ be a weak Maass form such that $\xi_{1/2}(f)$ is a
newform which maps to $G\in S_2^{new}(N)$ under the Shimura
correspondence. Denote the Fourier coefficients of $f^+$ by
$c^+(m,h)$. Then the following are equivalent:
\begin{enumerate}
\item[(i)]
The Heegner divisor $y_{\Delta,r}(f)$ vanishes in
$J(\Q(\sqrt{\Delta}))\otimes\C$.
\item[(ii)]
The coefficient $c^+(\tfrac{|\Delta|}{4N},\tfrac{r}{2N})$ is
algebraic.
\item[(iii)]
The coefficient $c^+(\tfrac{|\Delta|}{4N},\tfrac{r}{2N})$ is
contained in $F_G$.
\end{enumerate}
\end{theorem}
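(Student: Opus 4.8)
The plan is to route all three conditions through the canonical differential of the third kind $\eta_{\Delta,r}(z,f)=2\pi i\,F(z)\,dz$ attached to $y_{\Delta,r}(f)$, as produced in Theorem~\ref{cdfourier}. Since $\Delta\neq 1$ the Weyl vectors vanish and $C_{\Delta,r}(f)=0$, so $\res(\eta_{\Delta,r}(z,f))=y_{\Delta,r}(f)$ exactly, and by Theorem~\ref{cdfourier} the Fourier coefficients of $F$ are
\[
a(n)=-\sgn(\Delta)\sqrt{\Delta}\sum_{d\mid n}\frac{n}{d}\leg{\Delta}{d}c^+\left(\tfrac{|\Delta|n^2}{4Nd^2},\tfrac{rn}{2Nd}\right).
\]
In particular $a(1)=-\sgn(\Delta)\sqrt{\Delta}\,c^+(\tfrac{|\Delta|}{4N},\tfrac{r}{2N})$, and by M\"obius inversion the family $\{a(n)\}_n$ is algebraic (resp.\ lies in a fixed field containing $\sqrt{\Delta}$) exactly when the family $\{c^+(\tfrac{|\Delta|n^2}{4N},\tfrac{rn}{2N})\}_n$ is. As (iii)$\Rightarrow$(ii) is immediate, it suffices to prove (ii)$\Rightarrow$(i) and (i)$\Rightarrow$(iii).

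The decisive new point, by comparison with Theorem~\ref{equivcond}, is that algebraicity of the \emph{single} coefficient at $n=1$ already forces algebraicity of the \emph{whole} family $\{a(n)\}$, and this is exactly where the hypothesis that $\xi_{1/2}(f)$ is a Hecke eigenform enters. By Theorem~\ref{goodpoint} the divisor $y_{\Delta,r}(f)$ lies in the $G$-isotypical component of $J(\Q(\sqrt{\Delta}))\otimes\C$; since the construction $D\mapsto\eta_D$ is Hecke-equivariant (its real-period normalization being defined over $\Q$), $F$ is a weight $2$ Hecke eigenform with the eigenvalues $\lambda_p$ of $G$, so its coefficients satisfy the recursions $a(pn)=\lambda_p a(n)-p\,a(n/p)$ for $p\nmid N$ and $a(pn)=\lambda_p a(n)$ for $p\mid N$, whence $a(n)=a(1)\gamma_n$ with $\gamma_n\in F_G$. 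Concretely, the same propagation follows from Lemma~\ref{lem:hecke}: writing $f\mid_{1/2}T(p)=p^{-1}\lambda_p f+f_p'$ with $f_p'\in M^!_{1/2,\rho}(F_G)$, and using that at the twisted square indices the weight $1/2$ cusp forms vanish (Lemma~\ref{weakrat}), so that the relevant coefficients of $f_p'$ lie in $F_G$ by Lemma~\ref{rationalcoeff}, the explicit formula \eqref{heckeop} recursively determines all $c^+(\tfrac{|\Delta|n^2}{4N},\tfrac{rn}{2N})$ from the $n=1$ value up to elements of $F_G$. Either way (ii) yields that every $a(n)$ is algebraic, so $\eta_{\Delta,r}(z,f)$ is defined over $\bar\Q$; Scholl's Theorem~\ref{scholl} then shows a non-zero multiple of $y_{\Delta,r}(f)$ is principal, i.e.\ $y_{\Delta,r}(f)$ vanishes in $J(\Q(\sqrt{\Delta}))\otimes\C$. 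This is (i).

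For (i)$\Rightarrow$(iii), suppose $y_{\Delta,r}(f)$ vanishes in $J(\Q(\sqrt{\Delta}))\otimes\C$, so that a non-zero multiple of it is principal. By Lemma~\ref{properties}(v) the divisor is defined over $F_G(\sqrt{\Delta})$, so Theorem~\ref{scholl} forces $\eta_{\Delta,r}(z,f)$ to be defined over $F_G(\sqrt{\Delta})$; by the $q$-expansion principle all $a(n)\in F_G(\sqrt{\Delta})$, whence $c^+(\tfrac{|\Delta|}{4N},\tfrac{r}{2N})=-\sgn(\Delta)a(1)/\sqrt{\Delta}\in F_G(\sqrt{\Delta})$. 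To descend to $F_G$, let $\sigma$ be the non-trivial automorphism of $F_G(\sqrt{\Delta})/F_G$. By Lemma~\ref{properties}(iii)--(iv) we have $\sigma(y_{\Delta,r}(f))=-y_{\Delta,r}(f)$, hence $\sigma(\eta_{\Delta,r}(z,f))=-\eta_{\Delta,r}(z,f)$; comparing $q$-expansions and using $\sigma(\sqrt{\Delta})=-\sqrt{\Delta}$ shows that $\sigma$ fixes $c^+(\tfrac{|\Delta|}{4N},\tfrac{r}{2N})$, so this coefficient lies in $F_G$, which is (iii). This closes the cycle (i)$\Rightarrow$(iii)$\Rightarrow$(ii)$\Rightarrow$(i).

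The main obstacle is the propagation step in (ii)$\Rightarrow$(i): upgrading algebraicity of one coefficient to the full family of twisted square coefficients. This is precisely the content that separates Theorem~\ref{thm:alg1} from Theorem~\ref{equivcond}, and it rests on the eigenform hypothesis through Theorem~\ref{goodpoint} (equivalently through Lemma~\ref{lem:hecke}); some care is needed to control the weight $1/2$ cusp forms at the exceptional discriminants where $|\Delta|$ is a perfect square, where the eigenform interpretation of $F$ is the cleaner route. The remaining arithmetic, namely Scholl's transcendence theorem and the Galois descent via Lemma~\ref{properties}, simply reproduces the argument of Theorem~\ref{equivcond} with $F_G$ in place of $\Q$.
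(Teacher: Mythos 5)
Your proposal is correct and follows essentially the same route as the paper: both reduce to Theorem~\ref{equivcond} via the canonical differential $\eta_{\Delta,r}(z,f)$ and Scholl's theorem, and both propagate algebraicity between the $n=1$ coefficient and the full family $c^+(\tfrac{|\Delta|n^2}{4N},\tfrac{rn}{2N})$ using Lemma~\ref{lem:hecke} together with Lemmas~\ref{rationalcoeff} and~\ref{weakrat}. The only differences are cosmetic: you run the Hecke propagation as an upward induction from $n=1$ where the paper argues by contradiction on the smallest transcendental index, and you spell out the Galois descent to $F_G$ that the paper delegates to (the $F_G$-version of) Theorem~\ref{equivcond}.
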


\begin{proof}
If (i) holds,
then Theorem \ref{equivcond} implies that the coefficients
$c^+(\tfrac{|\Delta|n^2}{4N},\tfrac{rn}{2N})$ are in $F_G$ for all
positive integers $n$. Hence (iii) holds. Moreover, it is clear that
(iii) implies (ii).

Now we show that (ii) implies (i). If $y_{\Delta,r}(f)\neq 0$ is in
$J(\Q(\sqrt{\Delta}))\otimes\C$, then Theorem \ref{equivcond}
implies that there are positive integers $n$ for which
$c^+(\tfrac{|\Delta|n^2}{4N},\tfrac{rn}{2N})$ is transcendental. Let
$n_0$ be the smallest of these integers.
We need to show that $n_0=1$.

Assume that $n_0\neq 1$, and that $p\mid n_0$ is prime.
%
Let $\lambda_p$ be the eigenvalue of the Hecke operator $T(p)$
corresponding to $G$. By Lemma \ref{lem:hecke}, there is a $f'\in
M^!_{1/2, \rho}(F_G)$ such that
\[
f\mid_{1/2} T(p) = p^{-1} \lambda_p f+ f'.
\]
Using the formula for the action of $T(p)$ on the Fourier expansion
of $f$, we see that
$c^+(\tfrac{|\Delta|n_0^2}{4N},\tfrac{rn_0}{2N})$ is an algebraic
linear combination of Fourier coefficients
$c^+(\tfrac{|\Delta|n_1^2}{4N},\tfrac{r_1 n_1}{2N})$ of $f$ with
$n_1\leq n_0/p$ and coefficients of $f'$. In view of Lemma
\ref{rationalcoeff} and Lemma \ref{weakrat}, this implies that
$c^+(\tfrac{|\Delta|n_0^2}{4N},\tfrac{rn_0}{2N})$ is algebraic,
contradicting our assumption.
\end{proof}

\begin{remark}
\label{orthcusp3} Theorem \ref{thm:alg1} also holds for $\Delta=1$
when $S_{1/2,\rho}=0$. More generally, it should also hold for
$\Delta=1$ when $f$ is chosen to be orthogonal to the cusp forms in
$S_{1/2,\rho}$. The latter condition ensures that
Theorem~\ref{equivcond} still applies, see Remark \ref{orthcusp1}.
However, it remains to show that a weakly holomorphic form $f'\in
M^!_{1/2, \rho}(F_G)$ which is orthogonal to cusp forms, automatically
has {\em all} coefficients in $F_G$.
\end{remark}

Using the action of the Hecke algebra on the Jacobian, we may derive
a more precise version of Corollary \ref{GKZ-theorem}. Let
$y^G_{\Delta,r}(m,h)$ denote the projection of the Heegner divisor
$y_{\Delta,r}(m,h)$ onto its $G$-isotypical component. We consider
the generating series
\[
A^G_{\Delta,r}(\tau)= \sum_{h\in L'/L} \sum_{n>0
} y^G_{\Delta,r }(-n,h)q^n\frake_{h}\in S_{3/2,\bar\rho}\otimes_\Z
J(\Q(\sqrt{\Delta})).
\]

\begin{theorem}
\label{isogen} We have the identity
\[
A^G_{\Delta,r}(\tau)= g(\tau)\otimes y_{\Delta,r}(f).
\]
In particular, the space in $(J(\Q(\sqrt{\Delta}))\otimes \C)^G$
spanned by the $y^G_{\Delta,r}(m,h)$ is at most one-dimensional and
is generated by $y_{\Delta,r}(f)$.
\end{theorem}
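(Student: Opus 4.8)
The plan is to combine the vector-valued Gross--Kohnen--Zagier statement of Corollary \ref{GKZ-theorem} with the Hecke-equivariance already exploited in Theorem \ref{goodpoint}, and then to pin down the resulting point in the Jacobian by pairing against $f$. By Corollary \ref{GKZ-theorem} the series $A_{\Delta,r}$ lies in $S_{3/2,\bar\rho}\otimes J(\Q(\sqrt{\Delta}))$; projecting each coefficient onto the $G$-isotypical component of $J(\Q(\sqrt{\Delta}))\otimes\C$ gives $A^G_{\Delta,r}\in S_{3/2,\bar\rho}\otimes (J(\Q(\sqrt{\Delta}))\otimes\C)^G$. First I would show that the Hecke operator $T(p)$ (for primes $p\nmid N$) acting on the variable $\tau$ via the weight $3/2$ action on $S_{3/2,\bar\rho}$ agrees, coefficient-by-coefficient, with the operator $T(p)$ acting on the Jacobian values. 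Comparing \eqref{heckeop} for $k=3/2$ with the divisor relation \eqref{heckejac} shows that the $q^n\frake_h$-coefficient of $A_{\Delta,r}\mid_{3/2}T(p)$ equals $T(p)\,y_{\Delta,r}(-n,h)$; note that for weight $3/2$ the powers of $p$ in \eqref{heckeop} are exactly $p^{3/2-3/2}=1$ and $p^{2\cdot 3/2-2}=p$, so they match \eqref{heckejac} with no extra factor. Hence $A_{\Delta,r}\mid_{3/2}T(p)=T(p)A_{\Delta,r}$, and the same identity holds after $G$-isotypical projection.

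Since $T(p)$ acts on $(J(\Q(\sqrt{\Delta}))\otimes\C)^G$ by the scalar $\lambda_p$ (the $p$-th eigenvalue of $G$, equivalently of $g$), it follows that $A^G_{\Delta,r}\mid_{3/2}T(p)=\lambda_p A^G_{\Delta,r}$ for all $p\nmid N$. Expanding $A^G_{\Delta,r}$ in a basis of $(J(\Q(\sqrt{\Delta}))\otimes\C)^G$, each coordinate is a form in $S_{3/2,\bar\rho}$ carrying the eigenvalue system of $G$ at almost all primes. By the Shimura correspondence together with strong multiplicity one for the level-$N$ newform $G$, the corresponding eigenspace in $S_{3/2,\bar\rho}$ is the line $\C g$; hence every coordinate is a scalar multiple of $g$, and therefore $A^G_{\Delta,r}(\tau)=g(\tau)\otimes P$ for a unique $P\in (J(\Q(\sqrt{\Delta}))\otimes\C)^G$.

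It remains to identify $P=y_{\Delta,r}(f)$. Comparing $q^n\frake_h$-coefficients gives $y^G_{\Delta,r}(-n,h)=b(n,h)P$, where $b(n,h)$ are the Fourier coefficients of $g$. Substituting into the definition of $y_{\Delta,r}(f)$, using $b(0,h)=0$ since $g$ is cuspidal, and invoking the coefficient formula \eqref{pairalt}, I obtain
\[
\sum_{h\in L'/L}\sum_{m<0} c^+(m,h)\, y^G_{\Delta,r}(m,h)=\Bigl(\sum_{h\in L'/L}\sum_{n\leq 0} c^+(n,h)\,b(-n,h)\Bigr)P=\{g,f\}\,P.
\]
By Lemma \ref{goodf} we have $\xi_{1/2}(f)=\|g\|^{-2}g$, so $\{g,f\}=(g,\xi_{1/2}(f))_{3/2}=\|g\|^{-2}(g,g)_{3/2}=1$; and by Theorem \ref{goodpoint} the divisor $y_{\Delta,r}(f)$ already lies in the $G$-isotypical component, so its projection equals itself. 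Thus the left-hand side is $y_{\Delta,r}(f)$ and $P=y_{\Delta,r}(f)$, proving the identity. The final assertion is then immediate, since each $y^G_{\Delta,r}(m,h)=b(-m,h)\,y_{\Delta,r}(f)$ lies on the single line $\C\,y_{\Delta,r}(f)$.

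I expect the main obstacle to be the Hecke-equivariance step: verifying that the divisor relation \eqref{heckejac} matches the weight $3/2$ coefficient action \eqref{heckeop} exactly, including the Kronecker-symbol and sign bookkeeping coming from $\sigma=\sgn(\Delta)$ and the passage from the Fourier index $n$ to the negative parameter $m=-n$, and confirming via strong multiplicity one that the relevant eigenspace in $S_{3/2,\bar\rho}$ is genuinely one-dimensional. The pairing computation fixing $P$ is then a short and clean consequence.
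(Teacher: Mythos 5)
Your proposal is correct and follows essentially the same route as the paper's proof: establish $T(p)A_{\Delta,r}=A_{\Delta,r}\mid_{3/2}T(p)$ by matching \eqref{heckeop} with \eqref{heckejac}, conclude via multiplicity one for $S_{3/2,\bar\rho}^{new}$ that $A^G_{\Delta,r}=g\otimes P$, and identify $P=y_{\Delta,r}(f)$ by pairing with $f$ using \eqref{pairalt} and $\{g,f\}=(g,\xi_{1/2}(f))=1$. Your version is slightly more explicit than the paper's (the exponent check at $k=3/2$, the use of Theorem \ref{goodpoint} to equate $y_{\Delta,r}(f)$ with its own $G$-projection), but the argument is the same.
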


\begin{proof}
We write $\lambda_l$ for eigenvalue of the Hecke operator $T(l)$
corresponding to $G$ (where $l\in \Z_{>0}$). Let $p$ be any prime
coprime to $N$. By means of \eqref{heckeop} and \eqref{heckejac}, we
see that
\[
T(p) A_{\Delta,r} = A_{\Delta,r} \mid_{3/2} T(p),
\]
where on the left hand side the Hecke operator acts through the
Jacobian, while on the right hand side it acts through
$S_{3/2,\bar\rho}$. Consequently, for the $G$-isotypical part we
find
\[
A^G_{\Delta,r} \mid_{3/2} T(p) = \lambda_p A^G_{\Delta,r}.
\]
Hence $A^G_{\Delta,r}$ is an eigenform of all the $T(p)$ for $p$
coprime to $N$ with the same eigenvalues as $g$. By ``multiplicity
one'' for  $S_{3/2,\bar\rho}^{new}$, we find that $A^G_{\Delta,r}= C
g$ for some constant  $C\in (J(\Q(\sqrt{\Delta}))\otimes \C)^G$. To
compute the constant, we determine the pairing with $f$. We have
\begin{align*}
\{A^G_{\Delta,r},f\}&=y_{\Delta,r}(f),\\
\{C g,f\}&=C (g,\xi_{1/2}(f))= C.
\end{align*}
This concludes the proof of the theorem.
\end{proof}

Note that in the case $\Delta=1$, the theorem (in a different
formulation) was proved in a different way in \cite{GKZ}. We may use
the Gross-Zagier theorem to relate the vanishing of
$y_{\Delta,r}(f)$ to the vanishing of a twisted $L$-series
associated with $G$.

\begin{theorem}
\label{thm:alg2} Let the hypotheses be as in Theorem \ref{thm:alg1}.
The following are equivalent.
\begin{enumerate}
\item[(i)]
The Heegner divisor $y_{\Delta,r}(f)$ vanishes in
$J(\Q(\sqrt{\Delta}))\otimes\C$.
\item[(ii)]
We have $L'(G,\chi_\Delta,1)=0$.
\end{enumerate}
\end{theorem}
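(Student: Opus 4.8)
The plan is to detect the vanishing of $y_{\Delta,r}(f)$ through the N\'eron--Tate height pairing $\langle\cdot,\cdot\rangle$ on the Jacobian $J(\Q(\sqrt{\Delta}))$ and to evaluate that height by means of the Gross--Zagier theorem. Since $f\in H_{1/2,\rho}(F_G)$ has coefficients in the totally real field $F_G$, the divisor $y_{\Delta,r}(f)$ defines a point of $J(\Q(\sqrt{\Delta}))\otimes F_G\subset J(\Q(\sqrt{\Delta}))\otimes\R$, and by Theorem \ref{goodpoint} it lies in the $G$-isotypical component. Because the height pairing is positive definite on $J(\Q(\sqrt{\Delta}))\otimes\R$ modulo torsion, we have the basic equivalence
\[
y_{\Delta,r}(f)=0 \text{ in } J(\Q(\sqrt{\Delta}))\otimes\C \iff \langle y_{\Delta,r}(f),\,y_{\Delta,r}(f)\rangle=0,
\]
and the whole theorem reduces to showing that this self-height is a \emph{nonzero} multiple of $L'(G,\chi_\Delta,1)$.

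To compute the self-height I would first invoke Theorem \ref{isogen}. Writing the Fourier expansion of $g$ as $g(\tau)=\sum_{h,n} b_g(n,h)\,e(n\tau)\frake_h$, the identity $A^G_{\Delta,r}(\tau)=g(\tau)\otimes y_{\Delta,r}(f)$ gives $y^G_{\Delta,r}(-n,h)=b_g(n,h)\,y_{\Delta,r}(f)$ for all $n>0$ and $h$. Since $g$ is a nonzero newform, one can choose a pair $(n_0,h_0)$ with $b_g(n_0,h_0)\neq 0$ and with associated CM discriminant $d_0$ fundamental, and then
\[
\langle y_{\Delta,r}(f),\,y_{\Delta,r}(f)\rangle = b_g(n_0,h_0)^{-2}\,\langle y^G_{\Delta,r}(-n_0,h_0),\,y^G_{\Delta,r}(-n_0,h_0)\rangle .
\]
Thus it suffices to evaluate the height of the single twisted Heegner point $y^G_{\Delta,r}(-n_0,h_0)$, which by Lemma \ref{properties} is defined over $\Q(\sqrt{\Delta})$ and lies in the $\sigma=-1$ eigenspace for $\Gal(\Q(\sqrt{\Delta})/\Q)$, the piece corresponding to the twist of $G$ by $\chi_\Delta$.

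The key input is the twisted form of the Gross--Zagier theorem. Starting from the height formula of \cite{GZ} over the imaginary quadratic field $\Q(\sqrt{d_0\Delta})$ and averaging over the genus character $\chi_\Delta$ exactly as in \cite{GKZ}, one obtains
\[
\langle y^G_{\Delta,r}(-n_0,h_0),\,y^G_{\Delta,r}(-n_0,h_0)\rangle = \kappa\cdot L(G,\chi_{d_0},1)\cdot L'(G,\chi_\Delta,1),
\]
where the genus-character twist produces the factorization $L(G,\chi_\Delta,s)\,L(G,\chi_{d_0},s)$ of the relevant base-change $L$-function over $\Q(\sqrt{d_0\Delta})$, and where $L(G,\chi_\Delta,1)=0$ (forced by the sign $-1$) isolates the single derivative $L'(G,\chi_\Delta,1)$. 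The archimedean/period constant $\kappa$ is nonzero, and moreover $L(G,\chi_{d_0},1)\neq 0$ precisely because $b_g(n_0,h_0)\neq 0$, by the Waldspurger--Kohnen--Zagier proportionality between $|b_g(n_0,h_0)|^2$ and $L(G,\chi_{d_0},1)$. Combining the last two displays shows that $\langle y_{\Delta,r}(f),\,y_{\Delta,r}(f)\rangle$ is a nonzero multiple of $L'(G,\chi_\Delta,1)$, and the equivalence of (i) and (ii) follows from the first display.

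I expect the main obstacle to be the second displayed formula: carefully deducing the twisted Gross--Zagier identity from \cite{GZ} and \cite{GKZ}, matching the normalizations of the Heegner divisors $y_{\Delta,r}(-n_0,h_0)$ with those in loc.\ cit., tracking the eigenspace decomposition under complex conjugation, and verifying that the proportionality constant is genuinely nonzero. By contrast, the reduction steps built on positive definiteness of the height and on Theorem \ref{isogen} are comparatively routine once that formula is available.
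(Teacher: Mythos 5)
Your proposal follows essentially the same route as the paper: reduce to the single Heegner point $y^G_{\Delta,r}(-n_0,h_0)=b_g(n_0,h_0)\,y_{\Delta,r}(f)$ via Theorem \ref{isogen}, pick $(n_0,h_0)$ with $b_g(n_0,h_0)\neq 0$ (the paper secures a fundamental $d$ coprime to $N$ via Lemma 3.2 of \cite{SZ}), use the Waldspurger--Kohnen--Zagier formula to get $L(G,\chi_{d},1)\neq 0$, and apply the twisted Gross--Zagier height formula to detect vanishing through the positive definite N\'eron--Tate pairing. This matches the paper's proof in both structure and all key inputs.
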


\begin{proof}
We denote the Fourier coefficients of $g$ by $b(n,h)$ for $n\in
\Z-\sgn(\Delta)Q(h)$ and $h\in L'/L$. Since $g$ is a newform, by
Lemma 3.2 of \cite{SZ}, there is a fundamental discriminant $d$
coprime to $N$ such that $\sgn(\Delta)d<0$ and such that $b(n,h)\neq
0$ for $n=-\sgn(\Delta)\frac{d}{4N}$ and some $h\in L'/L$. According
to Corollary 1 of Chapter II in \cite{GKZ}, and \cite{Sk2}, we have
the Waldspurger type formula
\[
|b(n,h)|^2=\frac{\|g\|^2}{2\pi \|G\|^2}\sqrt{|d|}L(G,\chi_d,1).
\]
In particular, the non-vanishing of $b(n,h)$ implies the
non-vanishing of  $L(G,\chi_d,1)$.

On the other hand, it follows from the Gross-Zagier formula (Theorem
6.3 in \cite{GZ}) that the global Neron-Tate height on $J(H)$ of
$y_{\Delta,r}^G(-n,h)$ is given by
\[
\langle y_{\Delta,r}^G(-n,h),y_{\Delta,r}^G(-n,h)\rangle = \frac{
h_K u^2}{8\pi^2 \|G\|^2}\sqrt{|d\Delta|}
L'(G,\chi_\Delta,1)L(G,\chi_d,1).
\]
Here $H$ is the Hilbert class field of $K=\Q(\sqrt{d\Delta})$, and
$2u$ is the number of roots of unity in $K$, and $h_K$ denotes the
class number of $K$.

Consequently, the Heegner divisor $y_{\Delta,r}^G(-n,h)$ vanishes if
and only if $L'(G,\chi_\Delta,1)$ vanishes. But by Theorem
\ref{isogen} we know that
\[
y_{\Delta,r}^G(-n,h)= y_{\Delta,r}(f) b(n,h).
\]
This concludes the proof of the theorem.
\end{proof}

As described in the introduction, the results in this section imply
Theorem~\ref{Lvalues}. We conclude this section with the proof of
Corollary~\ref{estimates}.

\begin{proof}[Proof of Corollary~\ref{estimates}]
By Theorem~\ref{Lvalues}, it suffices to show that
\begin{equation}\label{estimate1}
\# \{ 0\leq \Delta < X \ \ {\text {fundamental}}\ : \
L(G,\chi_{\Delta},1)\neq 0\}\gg_G \frac{X}{\log X},
\end{equation}
and
\begin{equation}\label{estimate2}
\# \{ -X< \Delta < 0 \ \ {\text {fundamental}}\ : \
L'(G,\chi_{\Delta},1)\neq 0\}\gg_{G,\epsilon} X^{1-\epsilon}.
\end{equation}
Corollary 3 of \cite{OSk} implies (\ref{estimate1}), and the proof
of Theorem 1 of \cite{PP} implies (\ref{estimate2}).
\end{proof}

\section{Examples}
\label{sect:ex}

Here we give some examples related to the main results in this
paper.

\subsection{Twisted modular polynomials}

Here we use Theorems \ref{product} and \ref{equivcond-borcherds} to
deduce the infinite product expansion of twisted modular polynomials
found by Zagier (see \S 7 of \cite{Za2}).

Let $N=1$. Then we have $L'/L\cong \Z/2\Z$. Moreover,
$H_{1/2,\rho_L}=M^!_{1/2,\rho_L}$ and $H_{1/2,\bar\rho_L}=0$.
Therefore we consider the case where $\Delta>1$ is a positive
fundamental discriminant. Let $r\in \Z$ such that $\Delta\equiv
r^2\pmod{4}$. By \S 5 of \cite{EZ}, the space $M^!_{1/2,\rho_L}$ can
be identified with the space $M^!_{1/2}$ of scalar valued weakly
holomorphic modular forms of weight $1/2$ for $\Gamma_0(4)$
satisfying the Kohnen plus space condition. For every negative
discriminant $d$, there is a unique $f_{d}\in M^!_{1/2}$, whose
Fourier expansion at the cusp $\infty$ has the form
\[
f_{d}=q^{d}+ \sum_{\substack{n\geq 1\\n\equiv 0,1\;(4)}} c_d(n)q^n.
\]
The expansions of the first few $f_{d}$ are given in \cite{Za2}, and
one sees that the coefficients are rational. Theorems \ref{product}
and \ref{equivcond-borcherds} gives a meromorphic modular form
$\Psi_\Delta(z,f_d):=\Psi_{\Delta,r}(z,f_d)$ of weight $0$ for the
group $\Gamma=\Sl_2(\Z)$ whose divisor on $X(1)$ is given by
\[
Z_{\Delta}(d):=Z_{\Delta,r}(d/4,d/2)=\sum_{\lambda \in L_{\Delta
d}/\Gamma} \frac{\chi_\Delta(\lambda)}{w(\lambda)}\cdot  Z(\lambda).
\]
By \eqref{eq:qf},  $L_{\Delta d}/\Gamma$ corresponds to the
$\Gamma$-classes of integral binary quadratic forms of discriminant
$\Delta d$. Moreover, for sufficiently large $\Im(z)$, we have the
product expansion
\begin{align}
\label{prodn1} \Psi_\Delta(z,f_d)=\prod_{n=1}^{\infty}
\prod_{b\;(\Delta)} [1-e(nz+b/\Delta)]^{\leg{\Delta}{b}c_d(\Delta
n^2)}.
\end{align}
From these properties it follows that
\begin{align}
\Psi_\Delta(z,f_d)=\prod_{\lambda \in L_{\Delta d}/\Gamma} \big(
j(z)-j(Z(\lambda))\big)^{\chi_\Delta(\lambda)}.
\end{align}

As an example, let $\Delta:=5$ and $d:=-3$. There are two classes of
binary quadratic forms of discriminant $-15$, represented by
$[1,1,4]$ and $[2,1,2]$, and their corresponding CM points are
$\frac{-1+\sqrt{-15}}{2}$ and $\frac{-1+\sqrt{-15}}{4}$. It is well
known that the singular moduli of $j(\tau)$ of these points are
$-\frac{191025}{2}-\frac{85995}{2}\sqrt{5}$, and
$-\frac{191025}{2}+\frac{85995}{2}\sqrt{5}$. The function $f_{-3}$
has the Fourier expansion
\[
f_{-3}= q^{-3}-248 \,q+ 26752\, q^4 - 85995\, q^5 + 1707264\, q^8 -
4096248\, q^9 + \dots .
\]
Multiplying out the product over $b$ in \eqref{prodn1}, we obtain
the infinite product expansion
\[
\Psi_5(z,f_{-3})=
\frac{j(z)+\frac{191025}{2}+\frac{85995}{2}\sqrt{5}}{j(z)+\frac{191025}{2}-\frac{85995}{2}\sqrt{5}}
= \prod_{n=1}^{\infty}
\left(\frac{1+\frac{1-\sqrt{5}}{2}q^n+q^{2n}}{1+\frac{1+\sqrt{5}}{2}q^n+q^{2n}}\right)^{c_{-3}(5
n^2)}.
\]

\subsection{Ramanujan's mock theta functions $f(q)$ and $\omega(q)$}
\label{sect:8.2}

Here we give an example of a Borcherds product arising from
Ramanujan's mock theta functions.
We first recall the modular transformation
properties of $f(q)$, defined in (\ref{fq}), and
\begin{equation}\label{omega}
\begin{split}
\omega(q):=\sum_{n=0}^{\infty} \frac{q^{2n^2+2n}}{(q;q^2)_{n+1}^2} =
\frac{1}{(1-q)^2}+\frac{q^4}{(1-q)^2(1-q^3)^2}+\frac{q^{12}}{
(1-q)^2(1-q^3)^2(1-q^5)^2}+\dots.
\end{split}
\end{equation}
Using these functions, define the vector valued function $F(\tau)$
by
\begin{equation}\label{F}
F(\tau)=(F_0(\tau),\ F_1(\tau),\
F_2(\tau))^{T}:=(q^{-\frac{1}{24}}f(q), \
2q^{\frac{1}{3}}\omega(q^{\frac{1}{2}}),\
2q^{\frac{1}{3}}\omega(-q^{\frac{1}{2}}))^{T}.
\end{equation}
Similarly, let $G(\tau)$ be the vector valued non-holomorphic
function defined by
\begin{equation}\label{G}
G(\tau)=(G_0(\tau),\ G_1(\tau),\
G_2(\tau))^{T}:=2i\sqrt{3}\int_{-\overline{\tau}}^{i\infty}
\frac{(g_1(z),\ g_0(z),\ -g_2(z))^{T}}{ \sqrt{-i(\tau+z)}} \ dz,
\end{equation}
where the $g_i(\tau)$ are the cuspidal weight $3/2$ theta functions
\begin{equation}\label{gi}
\begin{split}
g_0(\tau)&:=\sum_{n=-\infty}^{\infty}
(-1)^n\left(n+\frac{1}{3}\right)e^{3\pi i \left(n+\frac{1}{3}
\right)^2 \tau},\\
g_1(\tau)&:=-\sum_{n=-\infty}^{\infty}
\left(n+\frac{1}{6}\right)e^{3\pi i
\left (n+\frac{1}{6}\right )^2 \tau},\\
g_2(\tau)&:=\sum_{n=-\infty}^{\infty} \left(n+\frac{1}{3}\right
)e^{3\pi i \left(n+\frac{1}{3} \right)^2 \tau}.\end{split}
\end{equation}
Using these vector valued functions, Zwegers \cite{Z1} let
$H(\tau):=F(\tau)-G(\tau)$,
and he proved \cite{Z1} that it is a vector valued weight $1/2$
harmonic weak Maass form. In particular, it satisfies
\begin{align}
\label{smallrep1}
H(\tau+1)&=\left(\begin{matrix}\zeta_{24}^{-1} & 0 & 0\\
0 & 0 & \zeta_3\\
0 &\zeta_3& 0\end{matrix}
\right) H(\tau),\\
\label{smallrep2} H(-1/\tau)&= \sqrt{-i\tau} \cdot \left(
\begin{matrix}
0&1&0\\
1&0&0\\
0&0&-1\end{matrix} \right) H(\tau).
\end{align}

Now let $N:=6$. One can check the following lemma which asserts that this
representation of $\tilde\Gamma$ is an irreducible piece of the Weil
representation $\bar\rho_L$.

\begin{lemma}
Assume that $H=(h_0,h_1,h_2)^T:\H\to \C^3$ is a vector valued
modular form of weight $k$ for $\tilde\Gamma$ transforming with the
representation defined by \eqref{smallrep1} and \eqref{smallrep2}.
Then the function
\begin{align}
\label{bigrep1} \tilde H=(0,h_0,h_2-h_1,0,-h_1-h_2, -h_0, 0, h_0,
h_1+h_2, 0, h_1-h_2, -h_0)^T
\end{align}
is a vector valued modular form of weight $k$ for $\tilde\Gamma$
with representation $\bar\rho_L$. Here we have identified $\C[L'/L]$
with $\C^{12}$ by mapping the standard basis vector of $\C[L'/L]$
corresponding to the coset $j/12+\Z\in L'/L$  to the standard basis
vector $e_j$ of $\C^{12}$ (where $j=0,\dots,11$).
\end{lemma}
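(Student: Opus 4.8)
The plan is to recognize the assignment $H=(h_0,h_1,h_2)^T\mapsto \tilde H$ as a fixed linear map $\iota\colon \C^3\to \C[L'/L]\cong\C^{12}$ (given by \eqref{bigrep1}) and to show that $\iota$ intertwines the three-dimensional representation $R$ of $\tilde\Gamma$ specified by \eqref{smallrep1} and \eqref{smallrep2} with $\bar\rho_L$. Since $\tilde\Gamma=\Mp_2(\Z)$ is generated by $T$ and $S$, and both $R$ and $\bar\rho_L$ are genuine representations, it suffices to check the two identities $\iota\,R(T)=\bar\rho_L(T)\,\iota$ and $\iota\,R(S)=\bar\rho_L(S)\,\iota$ of constant matrices; applying $\iota$ to the functional equations of $H$ then yields those of $\tilde H$ under $T$ and $S$, hence under all of $\tilde\Gamma$. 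Because $\iota$ is $\tau$-independent and weight-preserving, the automorphy factor $\phi(\tau)^{2k}$ is common to the transformation laws of $H$ and of $\tilde H$ and simply cancels, so the whole lemma reduces to these two matrix identities. Throughout I identify the coset $j/12+\Z$ with $j\in\Z/12\Z$; since $N=6$ we have $4N=24$, so its quadratic form value is $Q(j)=j^2/24\in\Q/\Z$ and the pairing is $(j,j')=jj'/12\in\Q/\Z$.

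The transformation under $T$ is the easy part. By \eqref{eq:weilt}, $\bar\rho_L(T)$ acts diagonally by $\frake_j\mapsto e(-j^2/24)\frake_j$, while $R(T)$ sends $(h_0,h_1,h_2)$ to $(\zeta_{24}^{-1}h_0,\zeta_3 h_2,\zeta_3 h_1)$ with $\zeta_{24}^{-1}=e(-1/24)$ and $\zeta_3=e(1/3)$. First I would simply compare the twelve components of $\iota\,R(T)H$ and $\bar\rho_L(T)\,\iota H$ one at a time. For example, the entry of index $2$ of $\tilde H$ is $h_2-h_1$, which $R(T)$ turns into $\zeta_3 h_1-\zeta_3 h_2=e(5/6)(h_2-h_1)$, matching the required factor $e(-2^2/24)=e(-1/6)=e(5/6)$; the remaining indices follow identically, each reducing to an elementary congruence modulo $24$.

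The transformation under $S$ is where the real work lies, and I expect it to be the main obstacle. By \eqref{eq:weils} with signature $(2,1)$ and $|L'/L|=12$, the conjugate representation acts by the finite Fourier transform $\bar\rho_L(S)\frake_j=\tfrac{e(1/8)}{\sqrt{12}}\sum_{j'}e(jj'/12)\frake_{j'}$. On the other side, the principal-branch identity $\sqrt{-i\tau}=e(-1/8)\sqrt{\tau}$ lets me peel the common metaplectic factor off \eqref{smallrep2} and read off the representation matrix $R(S)=e(-1/8)\bigl(\begin{smallmatrix}0&1&0\\1&0&0\\0&0&-1\end{smallmatrix}\bigr)$, so $R(S)H=e(-1/8)(h_1,h_0,-h_2)^T$. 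After cancelling the common factor, the identity $\iota\,R(S)=\bar\rho_L(S)\,\iota$ becomes, componentwise,
\[
e(-1/8)\,\iota\bigl((h_1,h_0,-h_2)^T\bigr)_{j'}=\frac{e(1/8)}{\sqrt{12}}\sum_{j=0}^{11}e(jj'/12)\,\tilde H_j ,
\]
equivalently $\iota\bigl((h_1,h_0,-h_2)^T\bigr)_{j'}=\tfrac{i}{\sqrt{12}}\sum_j e(jj'/12)\tilde H_j$, where the phase $e(1/8)/e(-1/8)=e(1/4)=i$ has been collected. The hard part is evaluating the exponential sum on the right: substituting the eight nonzero entries of $\tilde H$ and collecting the coefficients of $h_0,h_1,h_2$ reduces everything to sums of twelfth roots of unity such as $\zeta-\zeta^5+\zeta^7-\zeta^{11}$ and $\zeta^2-\zeta^4+\zeta^8-\zeta^{10}$ with $\zeta=e(1/12)$. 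These are elementary Gauss-type sums, each evaluating to $0$ or to $\pm 2i\sqrt{3}$; since $\sqrt{12}=2\sqrt{3}$, multiplication by $i/\sqrt{12}$ returns precisely the integer coefficients $0,\pm1$ occurring in $\iota((h_1,h_0,-h_2)^T)$. The delicate bookkeeping is the phase: the factor $e(-1/8)$ produced by $\sqrt{-i}$ must divide into the $e(1/8)$ of $\bar\rho_L(S)$ to leave the single clean value $e(1/4)=i$, and it is exactly this cancellation that forces the root-of-unity sums to come out as rational multiples.

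Carrying out this finite check for all twelve indices $j'$ (the cases $j'=1,2,5$ already display the pattern, giving respectively $h_1$, $-(h_0+h_2)$, and $-h_1$) completes the verification of the two generator identities. Together with the trivial matching at the zero entries $j'\in\{0,3,6,9\}$, this proves that $\iota$ is an intertwiner and hence that $\tilde H$ is a vector valued modular form of weight $k$ for $\tilde\Gamma$ with representation $\bar\rho_L$.
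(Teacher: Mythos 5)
Your proof is correct, and since the paper offers no argument at all for this lemma (it is stated with ``One can check\dots''), your generator-by-generator intertwining check is exactly the intended verification. I confirmed the key computations: the diagonal $T$-check reduces to congruences mod $24$ as you say, and for $S$ the exponential sums do evaluate as claimed (e.g.\ $j'=1$ yields $-2i\sqrt{3}\,h_1$, $j'=2$ yields $2i\sqrt{3}(h_0+h_2)$, and the indices $j'\equiv 0 \pmod 3$ vanish identically), with the phase $e(1/8)/e(-1/8)=i$ combining with $\pm 2i\sqrt3/\sqrt{12}$ to give the integer coefficients $0,\pm1$ of $\iota\bigl((h_1,h_0,-h_2)^T\bigr)$.
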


This lemma shows that $H$ gives rise to an element $\tilde H\in
H_{1/2,\bar\rho_L}$. Let $c^\pm(m,h)$ be the coefficients of
$\tilde{H}$.  For any fundamental discriminant $\Delta<0$ and any
integer $r$ such that $\Delta\equiv r^2\pmod{24}$, we obtain a
twisted generalized Borcherds lift $\Psi_{\Delta,r}(z,\tilde{H})$.
By Theorems~\ref{product} and \ref{equivcond-borcherds}, it is a
weight 0 meromorphic modular function on $X_0(6)$ with divisor
\[
2Z_{\Delta,r}(-\tfrac{1}{24},\tfrac{1}{12})
-2Z_{\Delta,r}(-\tfrac{1}{24},\tfrac{5}{12}).
\]
Moreover, it has the infinite product expansion
\begin{align}
\label{ex2prod} \Psi_{\Delta,r }(z,\tilde H)=
\prod_{\substack{n=1}}^\infty
P_\Delta\left(q^n\right)^{c^+(|\Delta|n^2/24,\,rn/12)},
\end{align}
where
\begin{align}
P_\Delta(X):=\prod_{b\;(\Delta)}\left[1-e(b/\Delta)X
\right]^{\leg{\Delta}{b}}.
\end{align}

For instance, let $\Delta:=-8$ and $r:=4$. The set
$L_{-8,4}/\Gamma_0(6)$ is represented by the binary quadratic forms
$Q_1=[6,4,1]$ and $Q_2=[-6,4,-1]$, and $L_{-8,-4}/\Gamma_0(6)$ is
represented by $-Q_1$ and $-Q_2$. The Heegner points in $\H$
corresponding to $Q_1$ and $Q_2$ respectively are
\begin{align*}
\alpha_1&=\frac{-2+\sqrt{-2}}{6},& \alpha_2&=\frac{2+\sqrt{-2}}{6}.
\end{align*}
Consequently, the divisor of $\Psi_{-8,4}(z,\tilde{H})$ on $X_0(6)$
is given by $2 (\alpha_1)-2(\alpha_2)$. In this case the infinite
product expansion \eqref{ex2prod} only involves the coefficients of
the components of $\tilde H$ of the form $\pm (h_1+h_2)$. To
simplify the notation, we put
\begin{align*}
-2q^{1/3}\big( \omega(q^{1/2})+ \omega(-q^{1/2})\big)
=:\sum_{n\in\Z+1/3} a(n)q^n
=-4\,q^{1/3}-12\,q^{4/3}-24\,q^{7/3}-40\,q^{10/3}-\dots.
\end{align*}
We have
\[
P_{-8}(X):=\frac{1+\sqrt{-2}X-X^2}{1-\sqrt{-2}X-X^2},
\]
and the infinite product expansion \eqref{ex2prod} can be rewritten
as
\begin{align}
\label{ex2prod2} \Psi_{-8,4 }(z,\tilde H)=
\prod_{\substack{n=1}}^\infty P_{-8}\left(q^n\right)^{\leg{n}{3}
a(n^2/3)}.
\end{align}

It is amusing to work out an expression for $\Psi_{-8,4
}(z,\tilde H)$. We use the Hauptmodul for $\Gamma^*_0(6)$, the
extension of $\Gamma_0(6)$ by all Atkin-Lehner involutions, which is
\begin{align*}
j_6^*(z)=\left( \frac{\eta(z)\eta(2z)}{\eta (3 z)\eta(6z)}\right)^4
+ 4 +3^4 \left( \frac{\eta(3z)\eta(6z)}{\eta (
z)\eta(2z)}\right)^4
=q^{-1}+
79\,q+352\,q^2+1431\,q^3+\dots.
\end{align*}
Here $\eta(z)=q^{1/24} \prod_{n=1}^\infty(1-q^n)$ denotes the
Dedekind eta function. We have
$j_6^*(\alpha_1)=j_6^*(\alpha_2)=-10$. Hence $j_6^*(z)+10$ is a
rational function on $X_0(6)$ whose divisor consists of the $4$
cusps with multiplicity $-1$ and the points $\alpha_1$, $\alpha_2$
with multiplicity $2$. The unique normalized cusp form of weight 4
for $\Gamma^*_0(6)$ is
\begin{align*}
\delta(z):=\eta(z)^2\eta(2z)^2\eta (3 z)^2\eta(6z)^2
=q-2\,q^2-3\,q^3+4\,q^4+6\,q^5+6\,q^6-16\,q^7-8\,q^8+\dots.
\end{align*}
Using these functions, we find that
\[
\phi(z):=\Psi_{-8,4 }(z,\tilde H)\cdot
(j^*_6(z)+10)\delta(z)
\]
is a holomorphic modular form of weight $4$ for $\Gamma_0(6)$ with
divisor $4(\alpha_1)$. Using the classical
Eisenstein series, it turns out that
\begin{align*}
450 \phi(z) &= (3360-1920 \sqrt{-2})\delta(z)
+(1-7\sqrt{-2})E_4(z) +(4-28\sqrt{-2})E_4(2z) \\
&\phantom{=}{}+(89+7\sqrt{-2})E_4(3z) +(356+28\sqrt{-2})E_4(6z)  .
\end{align*}
Putting this all together, (\ref{ex2prod2}) becomes
\begin{displaymath}\begin{split}
\prod_{\substack{n=1}}^\infty &\left(
\frac{1+\sqrt{-2}q^n-q^{2n}}{1-\sqrt{-2}q^n-q^{2n}}\right)^{\leg{n}{3}
a(n^2/3)}=\frac{\phi(z)}{(j^*_6(z)+10)\delta(z)}\\
&\ \ =1-8\sqrt{-2}q-(64-24\sqrt{-2})q^2+(384+168\sqrt{-2})q^3+(64-1768\sqrt{-2})q^4+\dots.
\end{split}
\end{displaymath}

\subsection{Relations among Heegner points and vanishing derivatives of $L$-functions}
\label{sect:8.3}

Let $G\in S_2^{new}(\Gamma_0(N))$ be a newform of weight $2$, and
let $\Delta$ be a fundamental discriminant such that
$L(G,\chi_\Delta,s)$ has an odd functional equation. By the
Gross-Zagier formula, the vanishing of $L'(G,\chi_\Delta,1)$ is
equivalent to the vanishing of a certain Heegner divisor in the
Jacobian. More precisely,  let $f$ be a harmonic weak Maass form of
weight $1/2$ corresponding to $G$ as in Section  \ref{sect:jac}.
Then $L'(G,\chi_\Delta,1)$ vanishes if and only if the divisor
$y_{\Delta,r}(f)$ vanishes. If $G$ is defined over $\Q$, we may
consider the generalized regularized theta lift of $f$ as in
Section~\ref{sect:bor}. It gives rise to a rational function
$\Psi_{\Delta,r}(z,f)$ on $X_0(N)$ with divisor $y_{\Delta,r}(f)$.
(If $G$ is not defined over $\Q$, one also has to consider the
Galois conjugates.) Such relations among Heegner divisors cannot be
obtained as the Borcherds lift of a weakly holomorphic form. They
are given by the generalized regularized theta lift of a harmonic
weak Maass form.

As an example, we consider the the relation for Heegner points of
discriminant $-139$ on $X_0(37)$ found by Gross (see \S 4 of
\cite{Za1}). Let $N:=37$, $\Delta:=-139$, and $r:=3$. In our
notation, $L_{-139,3}/\Gamma_0(37)$ can be represented by the
quadratic forms
\begin{align*}
Q_1&=[37,3,1],& Q_2&=[185,151,31], &  Q_3&=[185,-71,7],\\
Q_1'&=[-37,3,-1],& Q_2'&=[-185,151,-31],& Q_3'&=[-185,-71,-7].\\
\end{align*}
Denote the corresponding points on $X_0(37)$ by $\alpha_1$,
$\alpha_2$, $\alpha_3$ and $\alpha_1'$, $\alpha_2'$, $\alpha_3'$.
Hence we have
\begin{align*}
Z_{1,1}(-\tfrac{139}{4\cdot37},\tfrac{3}{2\cdot 37}) &=
\alpha_1+\alpha_2+\alpha_3+
\alpha_1'+\alpha_2'+\alpha_3',\\
Z_{-139,3}(-\tfrac{1}{4\cdot37},\tfrac{1}{2\cdot 37})&=
\alpha_1+\alpha_2+\alpha_3- \alpha_1'-\alpha_2'-\alpha_3'.
\end{align*}
Gross proved that the function
\[
r(z)=\frac{\eta(z)^2}{\eta(37 z)^2} -\frac{3+\sqrt{-139}}{2}
\]
on $X_0(37)$ has the divisor
$(\alpha_1)+(\alpha_2)+(\alpha_3)-3(\infty)$. This easily implies
that $r'(z)$, the image of $r(z)$ under complex conjugation,
has the divisor $(\alpha_1')+(\alpha_2')+(\alpha_3')-3(\infty)$.

We show how the function $r(z)$ can be obtained as a regularized
theta lift. Let $f_{139}\in H_{1/2,\rho_L}$ be the unique harmonic weak
Maass form whose Fourier expansion is of the form
\[
f_{139}=e(-\tfrac{139}{4\cdot37} \tau)\frake_{3} +
e(-\tfrac{139}{4\cdot37}\tau)\frake_{-3} + O(e^{-\eps v}),\qquad
v\to \infty.
\]
It is known that the dual space  $S_{3/2,\bar\rho_L}$ is
one-dimensional. Moreover, any element has the property that the
coefficients with index $\tfrac{139}{4\cdot37}$ vanish (see
\cite{EZ}, p.145). It follows from \eqref{pairalt} that
$\xi_{1/2}(f_{139})=0$, and so $f_{139}$ is weakly holomorphic. Its
Borcherds lift is equal to
\begin{align}
\label{ex3lift1} \Psi_{1,1}(z,f_{139})=r(z)\cdot r'(z)\cdot\frac{\eta(37
z)^2}{\eta(z)^2}.
\end{align}

On the other hand, we consider the unique  harmonic weak Maass form
$f_{1}\in H_{1/2,\bar\rho_L}$ whose Fourier expansion is of the form
\[
f_{1}=e(-\tfrac{1}{4\cdot37} \tau)\frake_{1} -
e(-\tfrac{1}{4\cdot37}\tau)\frake_{-1} + O(e^{-\eps v}),\qquad v\to
\infty.
\]
It is known that the dual space  $S_{3/2,\rho_L}$ is
two-dimensional. For a fixed $\lambda_0$ in the positive definite one-dimensional sublattice $K\subset L$, the theta series 
\[
g_0(\tau)=\sum_{\lambda\in K'} (\lambda,\lambda_0)\cdot q^{Q(\lambda)}\frake_\lambda
 \]
is a non-zero element. Under the Shimura correspondence it is mapped to the Eisenstein series in
$M_2^+(\Gamma_0(37))$. Let $g_1$ be a generator of the orthogonal complement of $g_0$ in $S_{3/2,\rho_L}$. Then $g_1$ is a Hecke eigenform and we may normalize it such that it has rational coefficients. The Shimura lift of $g_1$ is the newform
$G_1\in S_2^+(\Gamma_0(37))$, 
which corresponds to the
conductor $37$ elliptic curve
\[
E: \ \ y^2=x^3+10x^2-20x+8.
\]
Its $L$-function has an even
functional equation, and it is known that $L(G_1,1)\neq 0$. By the
Waldspurger type formula for skew holomorphic Jacobi forms, see
\cite{Sk2}, this implies that the coefficients of $g_1$ with index
$\tfrac{1}{4\cdot37}$ do not vanish.
In view of \eqref{pairalt}, we
find that
\[
\xi_{1/2}(f_{1})=c_0 g_0 +c_1 g_1
\]
with non-zero constants $c_0$ and $c_1$.
So $f_{1}$
is not weakly holomorphic. Nevertheless, we may look at the twisted
generalized Borcherds lift of $f_{1}$. We obtain that
\begin{align}
\label{ex3lift2} \Psi_{-139,3}(z,f_{1})=r(z)/
r'(z)=\frac{\eta(z)^2-\frac{3+\sqrt{-139}}{2}\eta(37z)^2}{\eta(z)^2-\frac{3-\sqrt{-139}}{2}\eta(37z)^2}.
\end{align}
F.~Str\"omberg computed a large number of coefficients of $f_{1}$ numerically.
The first few coefficients of the holomorphic part of $f_{1}$ (indexed by the corresponding discriminants) are listed in
Table \ref{table:2}. Details on the computations and some further results will be given in \cite{BrStr}.
\begin{table}[h]
\caption{\label{table:2} Coefficients of $f_{1}$}
\begin{tabular}{|r|cc| }
\hline \rule[-3mm]{0mm}{8mm}
$\Delta$ && $c^+(-\Delta)$\\
\hline
$-3$  && $-0.324428362769321517518\dots$ \\
$-4$  && $-0.259821199677656112490\dots$ \\
$-7$  && $-0.436656751226664126195\dots$ \\
$-11$ && $-0.137166725483836081720\dots$ \\
$\vdots$ && $\vdots$ \\
$-136$&&  $0.053577466885218739004\dots$ \\
$-139$&& $0$\\ 
$-151$&& $-0.277118960597558973488\dots$ \\
$\vdots$ && $\vdots$ \\
$-815$&& $-0.351965626356359803714\dots$ \\
$-823$&& $-1$\\ 
$-824$&& $-0.666202201365835224525\dots$ \\
\hline
\end{tabular}
\end{table}
The rationality of the coefficients $c^+(139)$ and $c^+(823)$ corresponds to the vanishing of the Heegner divisors $Z_{-139,3}(-\tfrac{1}{4\cdot37},\tfrac{1}{2\cdot 37})$ and $Z_{-823,19}(-\tfrac{1}{4\cdot37},\tfrac{1}{2\cdot 37})$ in the Jacobian of $X_0(37)$.

To obtain an element of $H_{1/2,\bar\rho_L}$ corresponding to $g_1$ as in Lemma \ref{goodf}
we consider the unique  harmonic weak Maass form
$f_{12}\in H_{1/2,\bar\rho_L}$ whose Fourier expansion is of the form
\[
f_{12}=e(-\tfrac{12}{4\cdot37} \tau)\frake_{30} -
e(-\tfrac{12}{4\cdot37}\tau)\frake_{-30} + O(e^{-\eps v}),\qquad v\to
\infty.
\]
Arguing as above we see that $\xi_{1/2}(f_{12})$ is a non-zero multiple of $g_1$.
Table \ref{table:3} includes some of the coefficients of $f_{12}$ and the corresponding values of 
$L'(G_1,\chi_\Delta,1)$ as numerically computed by F. Str\"omberg. We have that $L'(G_1,\chi_{-139},1)=L'(G_1,\chi_{-823},1)=0$ by the Gross-Zagier formula.
We see that the values are in accordance with 
Theorem~\ref{thm:alg1} and Theorem~\ref{thm:alg2}.

\begin{table}[h]
\caption{\label{table:3} Coefficients of $f_{12}$}
\begin{tabular}{|r|cc|cc| }
\hline \rule[-3mm]{0mm}{8mm}
$\Delta$ && $c^+(-\Delta)$&& $L'(G_1,\chi_\Delta,1)$ \\
\hline 
$-3$   && $1.026714911692035447445\dots$ &&$1.47929949207700\dots$\\
$-4$   && $1.220536400967031662527\dots$ &&$1.81299789721820\dots$\\
$-7$   && $1.690029746320007621414\dots$ &&$2.11071898017914\dots$\\
$-11$  && $0.588499823548491754837\dots$ &&$3.65679089534028\dots$\\
$\vdots$ && $\vdots$ && $\vdots$\\
$-136$ && $-4.839267599344343782986\dots$ &&$5.73824076491330\dots$\\
$-139$ && $-6                      $ &&$ 0$ \\
$-151$ && $-0.831356881792676920466\dots$ &&$6.69750855158616\dots$\\
$\vdots$ && $\vdots$ && $\vdots$\\
$-815$ && $121.9441031209309205888\dots$ &&$4.74925836934506\dots$\\
$-823$ && $312                     $ &&$ 0$ \\
$-824$ && $-322.9986066040975056735\dots$ &&$17.5028741140542\dots$\\
\hline
\end{tabular}
\end{table}

Observe that the numerics suggest that the holomorphic part of $3f_1-f_{12}$ has integral coefficients. This harmonic weak Maass form is mapped to a non-zero multiple of the theta function $g_0$ under $\xi_{1/2}$. So it should be viewed as an analogue of the function $\tilde H$ in the example of Section \ref{sect:8.2}.



\begin{thebibliography}{BrStr}

\bibitem[BS]{BS} \emph{B. J. Birch and N. M. Stephens}, Computation of Heegner points.  Modular forms (Durham, 1983),  13--41,
Horwood, Chichester (1984).

\bibitem[BSV]{Booker} \emph{A. R. Booker, A. Str\"ombergsson, and
A. Venkatesh}, Effective computation of Maass cusp forms, Int. Math.
Res. Not. \textbf{ID71281} (2006), 1--34.

\bibitem[Bo1]{Bo1}
\emph{R. Borcherds}, Automorphic forms with singularities on
Grassmannians, Invent. Math. \textbf{132} (1998), 491--562.

\bibitem[Bo2]{Bo2} {\em R. E. Borcherds}, The Gross-Kohnen-Zagier
  theorem in higher dimensions, Duke Math. J. {\bf 97} (1999),
  219--233.

\bibitem[Bo3]{Bo3}
{\emph R. E. Borcherds}, Correction to ``The Gross-Kohnen-Zagier
  theorem in higher dimensions'', Duke Math. J. {\bf 105} (2000), 183--184.


\bibitem[BO1]{BO1} \emph{K. Bringmann and K. Ono},
The $f(q)$ mock theta function conjecture and partition ranks,
Invent. Math. \textbf{165} (2006), 243--266.


\bibitem[BO2]{BO2} \emph{K. Bringmann and K. Ono},
Dyson's ranks and Maass forms, Ann. of Math., accepted for
publication.

\bibitem[BOR]{BOR} \emph{K. Bringmann, K. Ono, and R. Rhoades},
Eulerian series as modular forms, J. Amer. Math. Soc.
{\bf 21} (2008), pages 1085-1104.

\bibitem[Br]{Br} \emph{J. H. Bruinier}, Borcherds products on
  $\Orth(2,l)$ and Chern classes of Heegner divisors, Springer Lecture
  Notes in Mathematics {\bf 1780}, Springer-Verlag (2002).

\bibitem[BF]{BF} {\em J. H. Bruinier and J. Funke}, On two geometric theta lifts,
Duke Math. J. {\bf 125} (2004), 45--90.

\bibitem[BBK]{BBK} {\em J. H. Bruinier, J. Burgos, and U. K\"uhn},
Borcherds products and arithmetic intersection theory on Hilbert
modular surfaces, Duke Math. J. {\bf 139} (2007), 1--88.

\bibitem[BrSt]{BrSt} {\em J. H. Bruinier and O. Stein}, The Weil representation and Hecke operators for vector valued modular forms, Mathematische Zeitschrift, to appear.

\bibitem[BrStr]{BrStr} {\em J. H. Bruinier and F. Str\"omberg}, Computation of harmonic weak Maass forms, in preparation.

\bibitem[BKK]{BKK} {\em J. Burgos, J. Kramer, and U. K\"uhn},
Cohomological Arithmetic Chow groups,
J. Inst. Math. Jussieu. {\bf 6} (2007), 1--178.

\bibitem[EZ]{EZ} \emph{M. Eichler and D. Zagier}, The Theory of Jacobi
  Forms, Progress in Math. {\bf 55}, Birkh\"auser (1985).

\bibitem[FO]{FO} \emph{A. Folsom and K. Ono}, Duality involving
the mock theta function $f(q)$, preprint.


\bibitem[G]{Goldfeld}
\emph{D. Goldfeld}, Conjectures on elliptic curves over quadratic
fields, Number Theory, Carbondale, Springer Lect. Notes {\bf 751}
(1979), 108--118.

\bibitem[GS]{GoldSar} {\em D. Goldfeld and P. Sarnak},
Sums of Kloosterman sums, Invent. Math. {\bf 71} (1983), 243--250.

\bibitem[Gri]{Gr} \emph{P. A. Griffiths}, Introduction to algebraic
  curves, Amer. Math. Soc., Providence, Rhode Island
  (1989).

\bibitem[Gro1]{Gro} \emph{B. Gross}, Heegner points on $X\sb 0(N)$.
Modular forms (Durham, 1983),  87--105,
Horwood, Chichester (1984).

\bibitem[Gro2]{Gross} \emph{B. Gross},
Heights and the special values of $L$-series, Number theory
(Montreal, Que., 1985), 115--187, CMS Conf. Proc., 7, Amer. Math.
Soc., Providence, RI, 1987.

\bibitem[GZ]{GZ} {\em B. Gross and D. Zagier}, Heegner points and derivatives of
L-series, Invent. Math. {\bf 84} (1986), 225--320.

\bibitem[GKZ]{GKZ} \emph{B. Gross, W. Kohnen, and D. Zagier}, Heegner
  points and derivatives of $L$-series. II.  Math. Ann.  {\bf 278}
  (1987), 497--562.

\bibitem[KS]{KS} \emph{S. Katok and P. Sarnak},
Heegner points, cycles and Maass forms, Israel J. Math. {\bf 84}
(1993), 193--227.

\bibitem[Ka]{Ka} \emph{Y. Kawai},
Borcherds products for higher level modular forms, J. Math. Kyoto
Univ. {\bf 46} (2006), 415--438.

\bibitem[K]{K} \emph{W. Kohnen}, Fourier coefficients of modular forms
of half-integral weight.  Math. Ann.  {\bf 271}  (1985), 237--268.

\bibitem[KZ]{KZ} \emph{W. Kohnen and D. Zagier},
Values of $L$-series of modular forms at the center of the critical
strip.  Invent. Math.  {\bf 64}  (1981), no. 2, 175--198.

\bibitem[L]{langlands} \emph{R. P. Langlands}, Beyond endoscopy,
Contributions to automorphic forms, geometry, and number theory,
Johns Hopkins Univ. Press, Baltimore, Maryland, 2004, 611--697.

\bibitem[McG]{McG} \emph{W. J. McGraw}, The rationality of vector valued modular forms associated with the Weil representation.  Math. Ann.  {\bf 326}
(2003),   105--122.


\bibitem[OSk]{OSk} {\em K. Ono and C. Skinner},
Nonvanishing of quadratic twists of modular L-functions, Invent.
Math. {\bf 134} (1998), 651--660.

\bibitem[O]{O} {\em K. Ono}, Nonvanishing of quadratic twists
of modular $L$-functions with applications for elliptic curves, J.
reine angew. Math. {\bf 533} (2001), 81--97.

\bibitem[O2]{O2} {\emph K. Ono}, Unearthing the visions of a master:
harmonic Maass forms in number theory, Proceedings of the 2008 Harvard-MIT Current Developments in Mathematics Conference, in press.

\bibitem[Ogg]{Ogg} {\em A. P. Ogg}, Rational points on certain elliptic
modular curves.  Analytic number theory (Proc. Sympos. Pure Math.,
Vol XXIV, St. Louis Univ., St. Louis, Mo., 1972), 221--231. Amer.
Math. Soc., Providence, R.I. (1973).

\bibitem[PP]{PP} {\em A. Perelli and J. Pomykala}, Averages
of twisted elliptic $L$-functions, Acta Arith. {\bf 80} (1997),
149--163.

\bibitem[P]{Pribitkin} {\em W. Pribitkin}, A generalization of the
Goldfeld-Sarnak estimate on Selberg's Kloosterman zeta-function,
Forum Math. {\bf 12} (2000), 449--459.


\bibitem[Sa]{Sarnak} {\em P. Sarnak}, Maass cusp forms with integer
coefficients, A panorama of number theory or the view from Baker's
garden (Z\"urich, 1999),  121--127, Cambridge Univ. Press, Cambridge,
2002.


\bibitem[Sch]{Sch} {\em A. J. Scholl}, Fourier coefficients of Eisenstein series
 on non-congruence subgroups, Math. Proc. Camb. Phil. Soc.
  {\bf 99} (1986), 11--17.

\bibitem[Se]{Setzer} {\em B. Setzer}, Elliptic curves of prime conductor,
J. London Math. Soc. {\bf 10} (1975), 367--378.

\bibitem[Sh]{Sh}
\emph{G. Shimura}, On modular forms of half integral weight. Ann. of
Math. (2) {\bf 97} (1973), 440--481.


\bibitem[Sk1]{Sk1} \emph{N.-P. Skoruppa}, Developments in the theory
  of Jacobi forms. In: Proceedings of the conference on automorphic
  funtions and their applications, Chabarovsk (eds.: N. Kuznetsov and
  V. Bykovsky), The USSR Academy of Science (1990), 167--185. (see
  also MPI-preprint 89-40, Bonn (1989).)

\bibitem[Sk2]{Sk2} \emph{N.-P. Skoruppa}, Explicit formulas for the
  Fourier coefficients of Jacobi and elliptic modular forms.  Invent.
  Math.  {\bf 102} (1990), 501--520.

\bibitem[SZ]{SZ} \emph{N.-P. Skoruppa and D. Zagier}, Jacobi forms
and a certain space of modular forms,  Invent. Math.  {\bf 94}
(1988), 113--146.

\bibitem[T]{Tunnell}
\emph{J. Tunnell}, A classical Diophantine problem and modular forms
of weight $3/2$. Invent. Math. {\bf 72} (1983), no. 2, 323--334.

\bibitem[W]{W} \emph{M. Waldschmidt}, Nombers transcendents et
groupes alg\'ebraiques, Ast\'erisque {\bf 69--70} (1979).

\bibitem[Wa]{Wa}
\emph{J.-L. Waldspurger}, Sur les coefficients de Fourier des formes
modulaires de poids demi-entier,
J. Math. Pures Appl. (9) {\bf 60}  (1981), no. 4, 375--484.

\bibitem[Za1]{Za1}
\emph{D. Zagier}, $L$-series of elliptic curves, the
Birch-Swinnerton-Dyer conjecture, and the class number problem of
Gauss. Notices Amer. Math. Soc. {\bf 31} (1984), 739--743.

\bibitem[Za2]{Za2}
\emph{D. Zagier}, Traces of singular moduli, \emph{Motives,
Polylogarithms and Hodge Theory}, Part I. International Press
Lecture Series (Eds. F. Bogomolov and L. Katzarkov), International
Press (2002), 211--244.

\bibitem[Za3]{Za3} \emph{D. Zagier}, Ramanujan's mock theta functions
and their applications [d'apr\`es Zwegers and
Bringmann-Ono], S\'eminaire Bourbaki 60\'eme ann\'ee, 2006-2007,
no. 986.

\bibitem[Zw1]{Z1} \emph{S. P. Zwegers}, {Mock $\vartheta$-functions
 and real analytic modular forms}, $q$-series with applications to
 combinatorics, number theory, and physics (Ed. B. C. Berndt and K.
 Ono), Contemp. Math. \textbf{291}, Amer. Math. Soc., (2001),
 269--277.

 \bibitem[Zw2]{Z2} \emph{S. P. Zwegers}, {Mock theta functions},
 Ph.D. Thesis, Universiteit Utrecht, 2002.


\end{thebibliography}
\end{document}